\numberwithin{equation}{section}
\theoremstyle{theorem}
\newtheorem{thm}[equation]{Theorem}%[section]
\newtheorem{lemma}[equation]{Lemma}
\newtheorem{prop}[equation]{Proposition}
\newtheorem{coro}[equation]{Corollary}
\newtheorem{ex}[equation]{Example}
\theoremstyle{definition}
\theoremstyle{remark}
\newtheorem{remark}[equation]{Remark}
\newcommand{\exref}[1]{Ex\-am\-ple \ref{#1}}
\newcommand{\thmref}[1]{Theo\-rem \ref{#1}}
\newcommand{\lemref}[1]{Lem\-ma \ref{#1}}
\newcommand{\propref}[1]{Prop\-o\-si\-tion \ref{#1}}
\newcommand{\corref}[1]{Cor\-ol\-lary \ref{#1}}
\newcommand{\figref}[1]{Fig\-ure \ref{#1}}
\newcommand{\remref}[1]{Re\-mark \ref{#1}}
\providecommand{\divides}{\mid}
\providecommand{\ndivides}{\nmid}
\providecommand{\normaleq}{\unlhd}
\DeclareMathOperator{\Bi}{Bi}
\DeclareMathOperator{\Adj}{Adj}
\DeclareMathOperator{\End}{End}
\DeclareMathOperator{\GL}{GL}
\DeclareMathOperator{\Sp}{Sp}
\DeclareMathOperator{\Isom}{Isom}
\DeclareMathOperator{\Gal}{Gal}
\DeclareMathOperator{\Grp}{Grp}
\DeclareMathOperator{\Aut}{Aut}
\DeclareMathOperator{\Irr}{Irr}
\DeclareMathOperator{\Cent}{Cent}
\DeclareMathOperator{\Qd}{Qd}
\newcommand{\lt}[1]{{#1}^{\Lsh}}
\newcommand{\rt}[1]{{#1}^{\Rsh}}
\newcommand{\ct}[1]{{#1}^{\uparrow}}
\title{Isomorphism in expanding families of indistinguishable groups}
\author{Mark L. Lewis}
\address{
    Department of Mathematical Sciences\\
    Kent State University\\
    Kent, OH 44242
}
\email{lewis@math.kent.edu}
\author{James B. Wilson}
\address{
    Department of Mathematics\\
    Colorado State University\\
    Fort Collins, CO 80523\\
}
\email{jwilson@math.colostate.edu}
\date{\today}
\keywords{$p$-group, group isomorphism, polynomial-time}
\begin{document}

\begin{abstract}
For every odd prime $p$ and every integer $n\geq 12$, there is a Heisenberg group of order $p^{5n/4+O(1)}$ that has $p^{n^2/24+O(n)}$ pairwise nonisomorphic quotients of order $p^{n}$. Yet, these quotients are virtually indistinguishable.  They have isomorphic character tables, every conjugacy class of a non-central element has the same size, and every element has order at most $p$.  They are also directly and centrally indecomposable and of the same indecomposability type.  Nevertheless, there is a polynomial-time algorithm to test for isomorphisms 
between these groups.
\end{abstract}

\maketitle

%---------------------
\section{Introduction}

Deciding that two groups are isomorphic is a clear task: exhibit an invertible homomorphism between the groups.  On the other-hand, understanding why two groups are non-isomorphic can take many different forms, and in this paper we demonstrate how little we know about non-isomorphism.  To illustrate the situation,  we can prove that the dihedral group $D_{2^n}$ of order $2^n$ is non-isomorphic to the quaternion group $Q_{2^n}$ of order $2^n$ by checking that no mapping of generators for $D_{2^n}$ to generators for $Q_{2^n}$ extends to a homomorphism.  Instead, we usually report on some group isomorphism invariant, e.g. that $D_{2^n}$ has many elements of order $2$ whereas $Q_{2^n}$ has only one.  The latter is both informative and easier to prove.

In this article, we produce a family of groups each with size $p^n$ that have $p^{O(n^2)}$ different isomorphism types, but for which no obvious isomorphism invariant presents itself to distinguish a pair of groups from the family.  Yet, given a pair of groups from the family we can efficiently (in polynomial time) test if they are isomorphic.  If the algorithm does not produce an isomorphism, then we have proved that the groups are non-isomorphic.  Such a proof is as informative as a proof that $D_{2^n}\not\cong Q_{2^n}$ by exhausting all possible functions between them.  Such ``zero-knowledge'' non-isomorphism tests rightfully raise suspicion.

The family we produce is one of many, and it arose out of a larger study of Camina groups; we will say more about this in Section \ref{sec:closing}.  Though our family is very simple to describe, it also lies within the class of groups for which isomorphism appears most difficult to understand.  As a consequence, the group theory aspects of the proof are modest and straight-forward, but most of the proof is accomplished by use of bilinear maps, rings with involutions, and tensor products.  As these are not yet common tools for groups, we survey in Section \ref{sec:survey} the main ideas of these tools.

\subsection{Main results}
A group $H$ is a {\em generalized Heisenberg group} if there is a field $K$ and an integer $m$ such that $H$ is isomorphic to
\begin{align}\label{eq:gen-Hei}
 H_m(K) &  = \left\{ \begin{bmatrix} 1 & u & s \\ 0 & I_m & v^t \\ 0 & 0 & 1 \end{bmatrix}:
 s\in K, u,v\in K^m\right\}
\end{align}
When $m = 1$ we call $H$ a \emph{Heisenberg group}.   The family of groups in which we are interested are the nonabelian quotients of $H$.

First, a generalized Heisenberg group $H$ has an extraordinary number (compared to $|H|$) of nonisomorphic quotients of a fixed order.    We prove:

\begin{thm}\label{thm:main}
For every prime $p>2$ and every integer $n\geq 12$, there is a generalized Heisenberg group (in fact a Heisenberg group) of order $p^{5n/4+O(1)}$ that has $p^{n^2/24+O(n)}$ isomorphism classes of quotient groups that have
order $p^n$.
\end{thm}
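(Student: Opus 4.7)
The plan is to take $K = \GF(p^k)$ with $k$ close to $5n/12$ (adjusting by $O(1)$ for integrality), so that $H := H_1(K)$ has order $p^{3k} = p^{5n/4 + O(1)}$. Since $H$ has class $2$ with $[H,H] = Z(H) \cong K$ additively, the proper nonabelian quotients are exactly the groups $H/N$ for $N$ a proper $\mathbb{F}_p$-subspace of $Z(H)$, and such a quotient has order $p^n$ precisely when $\dim_{\mathbb{F}_p} N = 3k - n$, which for $k \approx 5n/12$ is $n/4 + O(1)$.

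First I would count these candidate central subspaces. The Gaussian binomial $\binom{k}{3k-n}_p$ is at least a positive constant times $p^{(3k-n)(n-2k)}$, and the choice $k \approx 5n/12$ maximizes the exponent at $(n/4)(n/6) = n^2/24$. So there are at least $p^{n^2/24 + O(n)}$ candidate subspaces.

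Next I would show that most of these subspaces yield non-isomorphic quotients. Two quotients $H/N$ and $H/N'$ are isomorphic exactly when $N$ and $N'$ lie in a common $\Aut(H)$-orbit, and inner automorphisms act trivially on $Z(H)$, so what matters is the image $G$ of $\Aut(H)$ in $\GL_{\mathbb{F}_p}(Z(H))$. The commutator form $\phi : H/Z(H) \times H/Z(H) \to Z(H)$ is the $K$-bilinear determinant on $K^2$. The hard part will be proving that this $K$-structure is intrinsic to $H$ and forces $G$ into $\Gamma L_1(K) = K^{\times} \rtimes \Gal(K/\mathbb{F}_p)$, a group of order $k(p^k-1) = p^{5n/12 + O(\log n)}$; in particular one must rule out exotic $\mathbb{F}_p$-linear (but non-$K$-semilinear) automorphisms of $H/Z(H) \cong K^2$ whose induced action on $Z(H)$ escapes $\Gamma L_1(K)$. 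The bilinear-maps-and-centroid machinery announced in \secref{sec:survey} is designed for exactly this: the centroid of $\phi$ recovers $K$ canonically as a subring of $\End_{\mathbb{F}_p}(Z(H))$, and any group automorphism of $H$ must preserve it.

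Once this is established, the orbit-stabilizer bound yields at least
\[
    \frac{p^{n^2/24 + O(n)}}{p^{5n/12 + O(\log n)}} \;=\; p^{n^2/24 + O(n)}
\]
isomorphism classes of quotients of $H$ of order $p^n$, which is the theorem. The condition $n \geq 12$ enters only to guarantee that $3k-n$ and $n-2k$ are both positive after the integer rounding, so that the Grassmannian under consideration is nontrivial.
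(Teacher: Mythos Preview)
Your counting is correct and matches the paper's, but there is a genuine gap in the step ``Two quotients $H/N$ and $H/N'$ are isomorphic exactly when $N$ and $N'$ lie in a common $\Aut(H)$-orbit.'' Only the trivial direction (same orbit $\Rightarrow$ isomorphic) is automatic; the converse is the heart of the matter, and without it your orbit count is only an \emph{upper} bound on the number of isomorphism classes, not the lower bound you need. Your centroid argument shows that $\Aut(H)$ acts on $Z(H)$ through $\Gamma L_1(K)$ (the paper's \thmref{thm:aut-Hei}), but it says nothing about why an abstract isomorphism $H/N\to H/N'$ should lift to an automorphism of $H$: that isomorphism sees only the quotient, not $H$.

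The paper handles this in two steps that your proposal omits. First, it proves a lifting theorem (\thmref{thm:lift-iso}/\corref{coro:orbits}): if $H/N$ and $H/N'$ are both \emph{indigenous} to $H$ (i.e.\ $H$ is the smallest generalized Heisenberg group covering them, detected via $\Adj(\Bi(H/N))$), then every isomorphism between them lifts. Second, it shows (\propref{prop:stable-q}) that indigeneity is automatic for all quotients of order $p^n$ provided $d$ satisfies the divisibility constraint of \lemref{lem:good-pair}(ii): no integer $i$ with $n-2d\le i<d$ divides $d$. This is why $d$ cannot simply be taken as $\lfloor 5n/12\rfloor$; it must be chosen with care (e.g.\ coprime to small primes, or prime), and this is the actual source of the hypothesis $n\ge 12$, not merely positivity of the Grassmannian dimensions. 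Without such a choice, some $H/N$ may be indigenous to a proper $H_m(F)$ with $F\subsetneq K$, and then isomorphisms among those quotients need not come from $\Aut(H)$.
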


It is not surprising that a group will have a large number of nonisomorphic quotients (consider free groups).  For comparison, Higman \cite[Section 2]{Higman:enum} created groups $F_N$ having $N^{O(\log_p^2 N)}$ distinct isomorphism classes appearing as quotients of $F_N$ and with size $N = p^n$; yet, $F_N$ has size $N^{O(\log_p N)}$.  The surprise in \thmref{thm:main} is that we obtain $N^{O(\log_p N)}$ distinct isomorphism classes of groups of size $N$ from a group of size as small as $N^{1.2+O(1/\log_p N)}$.  As these quotients are so large compared to the size of the parent group, they must have an extraordinary number of relations in common, but yet, they still display enormous diversity.

Despite the great number of isomorphism classes guaranteed by \thmref{thm:main}, our second result claims that we can relatively simply determine when two quotients of a generalized Heisenberg group are isomorphic. Algorithms to test for an isomorphism between general groups of order $N$ return an answer in $N^{\log_p N+O(1)}$-time \cite{Miller:nlogn}, where $p$ is the smallest prime dividing $N$, and where \emph{time} indicates an upper bound on the number of steps a routine performs.  It is an important open problem to determine if isomorphism testing of groups can be done in polynomial time in the order $N$ of the groups, but progress in this direction has been slow.  Amongst the hardest cases are the groups of order $N = p^n$, where $p$ is a prime, and having nilpotence class 2, such as quotients of generalized Heisenberg groups.  Indeed, for these groups, the most advanced method, known as the \emph{nilpotent quotient algorithm}, runs in time $N^{\log_c N} = p^{n^2/c'+O(n)}$, where $c$ and $c'$ depend only on $p$; see \remref{rem:nil-q-algo}.  For a survey of group isomorphism algorithms see \cite{Babai:iso,CH:iso,OBrien:iso}.

The algorithm in our next theorem works with groups given by generators (as permutations or matrices) and also with groups specified by black-box polycyclic presentations,\footnote{We say `black-box' here because multiplication in polycyclic groups is in the worst case exponential in the length of the presentation.  However, in practice operating in polycyclic groups is amongst the most efficient means for working with $p$-groups. So we regard the cost of multiplication as an acceptable constant and measure efficiency in that setting in terms of number of group operations.}  and so polynomial time in these contexts is a function of the these very terse input methods.  Hence, our algorithm represents an exponential improvement over all other known isomorphism tests that apply to these $p$-groups.  We had originally proved it only in the context of permutation representations.  We are indebted to L. Ronyai for an elegant adaptation (\lemref{lem:Ronyai-trick}) that extends our earlier algorithm to the remaining common input methods for groups.  We prove:

\begin{thm}\label{algo:main}
There are algorithms that determine
\begin{enumerate}[(i)]
\item if a group $G$ (given by permutations, matrices, or a black-box polycyclic presentation) is an epimorphic image of an odd order generalized Heisenberg group, and if so, then returns an epimorphism $H_m (K) \to G$ with $|H_m (K)|$ as small as possible, and
\item if two groups, that are epimorphic images of odd order generalized Heisenberg groups, are also isomorphic.
\end{enumerate}

The algorithms are deterministic polynomial-time in $\log |G|+p$ and Las Vegas\footnote{Las Vegas algorithms always return correct answers but with a user specified probability of $\varepsilon>0$, they may abort without an answer.} polynomial-time in $\log |G|$ (owing to the implicit need to factor polynomials over finite fields of characteristic $p$).
\end{thm}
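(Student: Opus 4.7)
The plan is to reduce group isomorphism for these groups to pseudo-isometry of commutator bilinear maps. Every epimorphic image $G$ of an odd-order $H_m(K)$ is a class-$2$ $p$-group of exponent $p$, and its commutator induces an alternating bilinear map $[\cdot,\cdot]\colon V \times V \to W$ with $V = G/Z(G)$, $W = [G,G]$, both elementary abelian. The Heisenberg bracket on $K^m \oplus K^m$ is the hyperbolic form $((u_1,v_1),(u_2,v_2)) \mapsto u_1 v_2^t - u_2 v_1^t$ valued in $K$, and quotients of $H_m(K)$ correspond to $\mathbb{F}_p$-quotients of this codomain. Two such groups are isomorphic precisely when the induced brackets are pseudo-isometric, converting the algorithmic task into linear algebra together with a normal-form computation in a small classical-group action.

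For part (i), I would verify structural preconditions---class at most $2$, exponent $p$, and $\dim_{\mathbb{F}_p}[G,G]$ within range---using standard polynomial-time routines for $Z(G)$ and $[G,G]$, and then tabulate the bracket as structure constants. The field $K$ is recovered intrinsically by computing the adjoint $*$-ring $\Adj([\cdot,\cdot])$ via the machinery of \secref{sec:survey}: its centre contains a maximal subfield $F$ over which $[\cdot,\cdot]$ is bilinear, and decomposing $V$ into a hyperbolic $F$-module pair $V_+ \oplus V_-$ with respect to $[\cdot,\cdot]$ yields a basis in which the bracket acquires the Heisenberg matrix form. This simultaneously produces the smallest admissible $H_m(K)$ and an explicit epimorphism $H_m(K) \to G$; minimality is enforced by the intrinsic nature of $\Adj$ and the hyperbolic decomposition.

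For part (ii), an isomorphism $G_1 = H_m(K)/N_1 \to G_2 = H_m(K)/N_2$ lifts to an automorphism of $H_m(K)$ sending $N_1$ to $N_2$, by the canonical recovery just described. Hence isomorphism reduces to deciding whether $N_1$ and $N_2$ lie in the same $\Aut(H_m(K))$-orbit of $\mathbb{F}_p$-subspaces of $Z(H_m(K)) \cong K^+$. The relevant quotient of $\Aut(H_m(K))$ acts on $K^+$ through a character related to the determinant of the $\GL_m(K)$-action on the hyperbolic decomposition, together with scalar dilations; this yields a tractable orbit problem on $\mathbb{F}_p$-subspaces of $K^+$, solvable in polynomial time by producing canonical forms under a small classical group.

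The main obstacle is handling permutation, matrix, and black-box polycyclic inputs uniformly: for permutations and matrices the $\mathbb{F}_p$-bases of $V$ and $W$ and the bracket structure constants are obtained by standard routines, but for polycyclic presentations they are not, and this is precisely where \lemref{lem:Ronyai-trick} is invoked to supply the same linear-algebraic data at acceptable cost. The Las Vegas component appears only when factoring polynomials over $\mathbb{F}_p$ to split the adjoint $*$-ring (needed to pin down $K$), while the explicit $p$ in the deterministic bound reflects a small number of $\mathbb{F}_p$-scalar enumerations in the normal-form step. I expect the pseudo-isometry orbit computation to be the most delicate point: generically such orbits can be exponentially many, and it is the rigidity forced by the hyperbolic Heisenberg structure that collapses the classification to a transparent invariant on subspaces of $K^+$.
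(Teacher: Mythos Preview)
Your high-level strategy---reduce to commutator bimaps, recover $K$ from the centre of $\Adj(\Bi(G))$, lift isomorphisms to $\Aut H_m(K)$, and solve an orbit problem on $\mathbb{F}_p$-subspaces of $K$---matches the paper. But two concrete points are wrong and leave genuine gaps.

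First, you have misidentified the role of \lemref{lem:Ronyai-trick}. It has nothing to do with extracting structure constants from polycyclic presentations; those are obtained by the standard black-box routines cited in the paper (the reference to \cite{HoltEO}). Ronyai's lemma is instead the algorithmic core of part~(ii): given two $\mathbb{F}_p$-subspaces $U,V$ of $K$, it decides in polynomial time whether some $c\in K^\times$ satisfies $Uc=V$, by writing this as a linear system over $\mathbb{F}_p$. Without it, testing the $K^\times$-part of the orbit would naively require enumerating $K^\times$, which is exponential in $\log|G|$ and would only be polynomial in permutation-degree inputs (this is exactly the limitation the paper's original argument had before Ronyai's contribution; see \remref{rem:nil-q-algo}).

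Second, your description of the action on $Z(H_m(K))\cong K$ is vague and not quite right. The relevant quotient of $\Aut H_m(K)$ acting on $K$ is precisely $\Gal(K)\ltimes K^\times$ (\thmref{thm:aut-Hei}); there is no determinant character in play. The algorithm does not produce a canonical form under a ``small classical group''. Rather, it loops over the $d=[K:\mathbb{F}_p]$ elements of $\Gal(K)$---polynomial in $\log|G|$---and for each $\sigma^i$ invokes Ronyai's lemma to test whether $(M\sigma^i)c=N$ has a solution $c\in K^\times$. Your proposal does not supply this mechanism, and ``canonical forms'' for $\mathbb{F}_p$-subspaces of $K$ under $\Gal(K)\ltimes K^\times$ are not obviously available; the Galois enumeration plus linear solve is what makes the step effective.
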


In our third and final result, we list our failures to distinguish the quotients of odd order generalized Heisenberg groups $H$ by traditional means.  In light of \thmref{thm:main}, one might expect that two quotients $G_1$ and $G_2$ of $H$ with the same order $p^n$ will be considerably distinct as groups, and in view of \thmref{algo:main} (ii), it would likely be straightforward to describe these differences.  Unfortunately, the  algorithm of \thmref{algo:main} (ii) does not appear to produce a group-theoretic property to characterize each isomorphism class.

Because of \thmref{algo:main} (i), we are concerned only with the differences between quotients $G_1$ and $G_2$ of a common generalized Heisenberg group $H = H_m (K)$ for which $|H|$ is as small as possible.  We say such quotients are \emph{indigenous} to $H$.  So our effort is to find isomorphism invariants for indigenous quotients $G_1$ and $G_2$ of $H$.  We also assume $|G_1| = |G_2|$, but amazingly that assumption appears to force a great number of typically discerning isomorphism invariants to be the same for both $G_1$ and $G_2$. Every non-trivial element of $G_1$ and $G_2$ has order $p$.  Also, $G_1$ and $G_2$ have isomorphic character tables, indeed the centralizer of every non-central element has the same size.   Next, we consider recent advances on decompositions of $p$-groups as in \cite{Wilson:unique-cent}, but we find indigenous quotients are directly and centrally indecomposable and of the same `type' of indecomposability.  With some modest constraints on the $|G_i|$ relative to $|H|$, we retain the large number of isomorphism types described in \thmref{thm:main} but also constrain the automorphism groups of the $G_i$ to have identical subgroups $C_i=C_{\Aut G_i}(G'_i)$ and furthermore, $\Aut G_i/C_i$ can take at most $2d(K)$ different values where $d(K)$ is the number of divisors of $\log_p |K|$.  In fact, if $\log_p |K|$ is prime, we have at most 2 types of automorphism groups possible.  The isomorphism invariants just described are often quite powerful even in difficult contexts involving $p$-groups of class $2$, e.g.  \cite[pp. 143--144]{Verardi} \& \cite[p. 99]{Elation}.  Therefore, we found it startling to have no use for them on such a large family of groups.

We hope we have illustrated the need for creative alternative structural properties that will apply to $p$-groups of class $2$.  Ideally, these new properties would be easily computed (say in polynomial time) and would lead to isomorphism invariants that would help us understand isomorphism of $p$-groups in broader contexts.  Admittedly, the interest in quotients of generalized Heisenberg groups is narrow, but we use these as an example of an entirely obvious family of groups for which the group isomorphism problem presents some of its most puzzling properties.

\subsection{Survey}\label{sec:survey}

Because of \thmref{algo:main}, we cannot assume that a group is specified in any manner relating to the natural definition of a Heisenberg group.  Therefore, our first step is to uncover properties of a group $G$ that determine when it is a generalized Heisenberg group and when it is an epimorphic image of a generalized Heisenberg group.  To obtain a usable algorithm, we also take care to involve properties of $G$ that can be computed efficiently.

The first step uses the commutation map of a $p$-group of class $2$.  This map $b =\Bi(G): G/Z(G) \times G/Z(G) \to G'$ assigns $b(Z(G)x,Z(G)y) = [x,y]$.  Baer observed that $b$ is biadditive.  Using this observation, we are able to translate our group questions to linear algebra and classical geometry.  From this result, we can identify when $G$ is a generalized Heisenberg group by determining the largest commutative ring $K = \Cent (b)$ for which $b$ becomes $K$-bilinear.  We show $G$ is a generalized Heisenberg group if and only if $K$ is a field and $b$ is an alternating nondegenerate $K$-form  (\thmref{thm:rec-Hei}).

To recognize epimorphic images $G$ of a generalized Heisenberg group $H$, we first remark that $b=B(G)$ factors through $\Bi(H)$.   To construct a suitable group $H$ from $G$, we construct $A = \Adj (b)$ as the largest ring over which $b$ factors through the tensor product $\otimes_A : G/Z(G) \times G/Z(G) \to (G/Z(G)) \otimes_A (G/Z(G))$ -- that requires that $A$ be defined to act on the right and left of $G/Z(G)$ and so $A$ is equipped with an anti-isomorphism of order at most $2$, i.e. an \emph{involution}.  Using properties of simple rings with involutions and their representations, we show that for epimorphic images of Heisenberg groups, the tensor product $\otimes_{A}$ is a nondegenerate alternating $F$-form for the center $F$ of $A$.  Indeed, $G$ is an epimorphic image of $H_m (F)$ where $2m=\dim_F (G/Z(G))$; in fact, $G$ is indigenous to $H_m(F)$ (\thmref{thm:rec-q-Hei}).  

Our tools so far are computable and rely mostly on linear algebra techniques and factoring polynomials.  In particular we have described enough already to prove \thmref{algo:main}(i).

The next crucial step is to show that when $G_1$ and $G_2$ are indigenous quotients of a generalized Heisenberg group $H$, then every isomorphism $\phi : G_1 \to G_2$ lifts to an automorphism of $H$ (\thmref{thm:lift-iso}).  This is done by using $\phi$ to induce a pseudo-isometry $(\varphi;\ct{\varphi})$ from $b_1=\Bi(G_1)$ to $b_2=\Bi(G_2)$ which is then extended to a pseudo-isometry $(\varphi;\ct{\Phi})$ between the tensors $\otimes_{\Adj (b_1)}$ and $\otimes_{\Adj (b_2)}$ (pseudo-isometry is the appropriate equivalence relation between alternating biadditive maps).  As the $G_i$ are indigenous to $H$, $\Bi(H)$ is pseudo-isometric to both $\otimes_{\Adj (b_1)}$ and $\otimes_{\Adj (b_2)}$, and so, we can obtain an automorphism of $H$ from $(\varphi;\ct{\Phi})$.

Finally, we prove our main theorems by considering the well-known structure of the automorphism group of a generalized Heisenberg group $H$.  From the isomorphism lifting property, two epimorphic images of $H$ are isomorphic if and only if their kernels lie in the same $(\Aut H)$-orbit.  As these kernels can be identified with $\mathbb{Z}/p$-subspaces of a finite field $K$, this amounts to understanding the $(\Gal(K)\ltimes K^{\times})$-orbits of the $\mathbb{Z}/p$-subspaces of $K$.  Each of these orbits is small, and so, there are many obits.  That explains the many isomorphism types in \thmref{thm:main}.  We use Ronyai's modification to test when two subspaces lie in the same orbit and so produce a very efficient test of isomorphism; \thmref{algo:main} (ii).

\subsection{Outline}

Section \ref{sec:back} gives background and Section \ref{sec:rec} deals with recognizing quotients of generalized Heisenberg groups. We prove our main theorems in Section \ref{sec:main}.  Section \ref{sec:invariants}  demonstrates a list of typically sensitive group isomorphism invariants which here are of no use.  Section \ref{sec:closing} considers $2$-groups and a problem of Brauer.

%--------------------------------------------
\section{Background}\label{sec:back}

Throughout $p$ will denote an odd prime.  All our groups, rings, and modules will be finite unless context makes this obviously false.  We will use the following standard group theory notations.  For elements $g,h\in G$, write $g^h = h^{-1}g h$, $[g,h] = g^{-1} g^h$, and $g^G = \{g^h:h\in G\}$. To fit these conventions, homomorphisms $\varphi : G \to H$ are evaluated as $g \varphi$, for $g \in G$, and all other functions are, as usual, on the left.  Given subgroups $H,K \leq G$, set $[H,K] = \langle [h,k] : h \in H, k\in K\rangle$.
Also, for a subset $S\subseteq G$, we write $C_G(H) = \{h\in G:\forall g\in S, [g,h] = 1\}$ to denote the \emph{centralizer} of $S$ in $G$.  Call $G' = [G,G]$ the \emph{commutator} subgroup of $G$, and $Z (G) = C_G (G)$ the \emph{center} of $G$.  We say that $G$ is \emph{nilpotent of class $2$} if $1 < G'\leq Z (G) < G$.  A group $G$ has \emph{exponent $p$} if $G^p = \langle g^p : g\in G \rangle$ is trivial.

%----------
\subsection{Bimaps}

In this work, we will typically need $k$ to be a finite field, but for the moment we require only that $k$ be a commutative unital ring and that $U$, $V$, and $W$ be $k$-modules.  We write $\End_k U$ for the ring of $k$-linear endomorphisms of $U$ and $\GL_k (U)$ for the group of $k$-linear automorphisms of $U$.  In cases were $k$ is omitted from the notation, it should be assumed to be the integers, which in most contexts could further reduce to the appropriate prime subfield $\mathbb{Z}/p$.

A  \emph{$k$-bimap} is a function $b : U \times V  \to
W$ of $k$-modules $V$ and $W$ with
\begin{align*}
  b (u+rx,v) & = b (u,v) + rb (x,v) & ( \forall u,x \in U,\forall v \in V,\forall r \in k)\\
  b (u,v+rx) & = b (u,v) + rb (u,x) & ( \forall u \in U,\forall v,x \in V,\forall r \in k).
\end{align*}
We say $b$ is {\em alternating} if $U = V$ and $b (u,u) = 0$ for all elements $u \in V$. Every $k$-bimap is also a $\mathbb{Z}$-bimap (even a $\mathbb{Z}/e$-bimap where $e$ annihilates $U \times V \times W$).  We say that $b$ is a \emph{$k$-form} if $W$ is a cyclic $k$-module.  Given $X,Y \subseteq V$, define $b (X,Y) = \langle b (x,y) : x \in X, y \in Y \rangle$.  For a $k$-linear map $\varphi : W \to Z$, we use $b\varphi$ for the bimap $V \times V \to Z$ defined as follows:
\begin{align*}
  (b\varphi) (u,v) & = b (u,v) \varphi & (\forall u,v \in V).
\end{align*}
In general we say a bimap $c:U\times V\to X$ \emph{factors through} $b$ if there is a $\phi:W\to X$ such that $c=b\phi$.
The left and right \emph{radicals} of $b$ are the submodules
$U^{\bot}  = \{ v \in V : b (U,v) = 0 \}$ and $V^{\top}  = \{u \in U : b (u,V) = 0 \}$.
Say that $b$ is \emph{nondegenerate} if $U^{\bot} = 0$ and $V^{\top} = 0$.  If $b$ is alternating, then $U^{\top} = V^{\bot}$.

A pair $b : U \times V \to W$ and $b' : U' \times V' \to W'$ of $k$-bimaps are \emph{(strongly) $k$-isotopic} if there is a triple
$(\lt{f} : U \to U', \rt{f} : V \to V'; \ct{f} : W \to W')$ of $k$-linear isomorphisms such that
\begin{align*}
  b (u,v)\ct{f} & = b'(u\lt{f}, v\rt{f}) & (\forall u,v \in V).
\end{align*}
(There is a notion of weak isotopism which will not be needed here.)
If $U = V$ and $U' = V'$, then we can consider a $k$-{\em pseudo-isometry} which is a $k$-isotopism $(\lt{f},\rt{f};\ct{f})$ where
$\lt{f} = \rt{f} =: f$.  We abbreviate $(\lt{f},\rt{f};\ct{f})$ by $(f;\ct{f})$ in that instance, but we remark that $\ct{f}$ is not completely determined by $f$ unless $W = b (V,V)$.  Finally,  if $W = W'$, then we define an \emph{isometry} as a pseudo-isometry $(f;\ct{f})$ with $\ct{f} = 1_W$.  In particular, we have the following natural groups of
pseudo-isometries and isometries for a $k$-bimap $b : V \times V \to W$:
\begin{align*}
    \Psi\Isom_k(b) & = \{(f;\ct{f})\in \GL_k(V) \times \GL_k(W):\forall u,v\in V,
        b (uf,vf) = b (u,v)\ct{f}\}\\
    \Isom_k(b) & = \{(f;\ct{f})\in \Psi\Isom_k(b): \ct{f} = 1\}\normaleq \Psi\Isom_k(b).
\end{align*}

\begin{remark}\label{rem:one-alt-form}
Every alternating nondegenerate $K$-form $j:V \times V \to K$ has a $K$-basis $\{e_1,\dots, e_m,f_1,\dots,f_m\}$ such that $j(e_i,e_j) = 0 = j(f_i,f_j)$ and $j(e_i,f_j) = \delta_{ij}$, for all $i$ and $j$ in $\{1,\dots,m\}$.  Hence, there is only one $K$-pseudo-isometry class of nondegenerate alternating $K$-form and  we take the bimap of \eqref{eq:alt} as a canonical representative from that class, defined by
\begin{align}\label{eq:alt}
    j(u,v) & = u\begin{bmatrix} 0 & I_m\\ -I_m & 0 \end{bmatrix} v^t & (\forall u,v \in K^{2m}).
\end{align}
\end{remark}

%-------------
\subsection{Baer's correspondence}\label{sec:Baer}
We work with odd $p$-groups by means of bimaps as introduced by Baer \cite{Baer:class-2}.  This method is the first approximation of the now well-established use of the Mal'cev-Kaloujnine-Lazard correspondence (sometimes inadequately referred to as the Baker-Campbell-Hausdorff formula); see \cite[Section V.5]{Jacobson:Lie} and \cite[Section 10]{Khukhro} for details.  In Section \ref{sec:2-groups}, we make a modest effort to extend this correspondence for use with Heisenberg $2$-groups.

Associated to each group $G$ of nilpotence class $2$ (without restriction on its order) is a function $b = \Bi (G):G/Z(G) \times  G/Z(G) \to G'$ where
\begin{align}
    b (Z(G)x,Z(G)y) & = [x,y] & (\forall x,y \in G ).
\end{align}
Baer showed that $b$ is an alternating nondegenerate $\mathbb{Z}$-bimap and now we write it additively.  If the exponent of $G$ is a prime $p$ (or more generally, if $G^p\leq Z(G)$ and $(G')^p = 1$), then $b$ is a $\mathbb{Z}/p$-bimap. We say that groups $G_1$ and $G_2$ of nilpotence class $2$ are \emph{isoclinic} if $\Bi (G_1)$ and $\Bi (G_2)$ are $\mathbb {Z}$-pseudo-isometric.  (This agrees with the usual broader meaning of isoclinism introduced by P. Hall.)  When $G_1$ and $G_2$ are isomorphic, they are immediately isoclinic.  Yet, $D_8$ and $Q_8$ are isoclinic but nonisomorphic groups.

\begin{ex}\label{ex:j-Hei}
If $H = H_m(K)$, then
\begin{align*}
	 H' = Z(H) = \left\{\begin{bmatrix} 1 & 0 & s \\ 0 & I_m & 0 \\ 0 & 0 & 1 \end{bmatrix} : s \in K\right\},
\end{align*}
and $\Bi (H)$ is an alternating nondegenerate $K$-form.
\end{ex}

In particular, $\Bi (H)$ is $\mathbb {Z}$-pseudo-isometric to $j:K^{2m} \times K^{2m} \to K$ in \eqref{eq:alt}.  (Later in Section \ref{sec:centroid} we show $\Bi (H)$ is a natural $K$-bimap and as such is $K$-pseudo-isometric to $j$, but for now $\Bi (H)$ is defined only as a $\mathbb {Z}$-bimap.)

Baer's bimap (above) establishes a natural correspondence between certain nilpotent groups of class $2$ and alternating bimaps.  If $b : V \times V \to W$ is an alternating $\mathbb{Z}[1/2]$-bimap, then define the corresponding Baer group $G=\Grp(b)$ for $b$ as the set $V \times W$ equipped with the product:
\begin{align}\label{def:Baer-group}
	(u;s) (v;t ) & = \left(u+v;s+t+\frac{1}{2}b (u,v)\right)
\end{align}
This is a group with familiar properties including:
$\forall u,v\in V$, $\forall s,t\in W$,$\forall e\in\mathbb{Z}$,
\begin{align}
\label{eq:exp}
    (u;s)^e & = \left(eu; es \right), \textnormal{ and }\\
\label{eq:comm}
    [(u;s),(v;t)] & = (0; b (u,v) ).
\end{align}
Hence, the center and commutator subgroups are as follows:
\begin{align}
    G'& = 0 \times b (V,V)\leq 0 \times W \leq V^{\bot(b)} \times  W  = Z(G).
\end{align}
In particular, $G$ is nilpotent of class $2$.  Notice that every $\mathbb{Z}$-pseudo-isometry $(\varphi;\hat{\varphi})$ from $b$ to another bimap $b':V' \times V' \to W'$ induces an isomorphism $(u;s)\mapsto (u\varphi;s\hat{\varphi})$ from $\Grp(b)$ to $\Grp(b')$.
Hence, if $b$ is nondegenerate and $W = b (V,V)$, then \eqref{eq:comm} implies that $b$ and $\Bi (\Grp(b))$ are naturally pseudo-isometric (by identifying $W$ with $0 \times W = \Grp(b)' = Z(\Grp(b))$ and $V$ with $(V \times W)/(0 \times  W)$).  Also,
for nilpotent groups $G$ of class $2$ for which $G/Z(G)$ and $G'$ have no $2$-torsion, it follows that $G$ is isoclinic to $\Grp(\Bi (G))$.  When $G^p = 1$ (which implies $p>2$) and $G' = Z(G)$,  it is possible to upgrade isoclinism to isomorphism.

\begin{prop}[Baer, 1939]\label{prop:Baer-correspondence}
If $G$ is a $p$-group where $1 = G^p < G' = Z(G) < G$ (so $p>2$), then every transversal $\ell:G/G' \to G$ with $0\ell = 1$ induces an isomorphism $\varphi_{\ell}:G \to \Grp(\Bi (G))$.
Also,
\begin{align*}
	\Aut G\cong \Psi\Isom_{\mathbb{Z}/p}(\Bi (G)) \ltimes_{\tau} \hom_{\mathbb{Z}/p}(G/Z(G),G').
\end{align*}
where for each $f\in \hom_{\mathbb{Z}/p}(G/Z(G),G')$ and each $(\varphi;\ct{\varphi})\in\Psi\Isom_{\mathbb{Z}/p}(\Bi (G))$, $(f)(\varphi;\ct{\varphi})\tau = \varphi^{-1} f\ct{\varphi}$.
Specifically, if $G = \Grp(b)$ for an alternating $\mathbb{Z}/p$-bimap $b:V \times V \to W$ with $W = b (V,V)$, then
\begin{enumerate}[(i)]
\item for all $(\varphi;\ct{\varphi})\in \Psi\Isom_{\mathbb{Z}/p}(\Bi (G))$ and all $(u;s)\in V \times  W$,
$(u;s)^{(\varphi;\ct{\varphi})} = (u\varphi;s\hat{\varphi})$, and
\item after canonically identifying $V$ with $G/Z(G) = (V \times  W)/(0 \times W)$ and $W$ with $G' = 0 \times W$, for all $\tau\in\hom(V,W)$ and all $(u;s)\in V \times  W$, $(u;s)^{\tau}  = (u;s+u\tau)$.
\end{enumerate}
\end{prop}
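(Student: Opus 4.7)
The plan has two stages. First, construct $\varphi_\ell$ by analyzing the central extension $1 \to G' \to G \to V \to 1$, with $V := G/G'$, through the cocycle attached to the transversal $\ell$. Second, deduce the automorphism-group decomposition by lifting pseudo-isometries of $b = \Bi(G)$ to automorphisms of the Baer group $\Grp(b)$.

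Since $G^p = 1$ and $G$ is of class $2$ with $G' = Z(G)$, both $V$ and $W := G'$ are $\mathbb{Z}/p$-vector spaces. The transversal $\ell$ provides a set-theoretic bijection $g \mapsto (\bar g; z_g)$, where $g = (\bar g)\ell \cdot z_g$, and the failure to be a homomorphism is measured by the $2$-cocycle $c : V\times V \to W$ defined by $(u)\ell\cdot (v)\ell = (u+v)\ell\cdot c(u,v)$. Applying the class-$2$ identity $xy = yx[x,y]$ to $(u)\ell$ and $(v)\ell$ gives $c(u,v) - c(v,u) = b(u,v)$, so the antisymmetric part of $c$ is exactly $\tfrac12 b$ and $c^{\mathrm{sym}} := c - \tfrac12 b$ is a symmetric $2$-cocycle. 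Expanding $((v)\ell)^p = 1$ through $c$ and using that $p$ is odd forces $c^{\mathrm{sym}}$ to be a coboundary, producing a set map $d : V \to W$ with $c^{\mathrm{sym}}(u,v) = (u+v)d - ud - vd$. Setting $(g)\varphi_\ell := (\bar g;\, z_g + (\bar g)d)$ and substituting into the Baer product \eqref{def:Baer-group} then verifies that $\varphi_\ell$ is a group homomorphism; bijectivity comes for free from $\ell$ being a transversal.

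For the automorphism claim, identify $G$ with $\Grp(b)$ via $\varphi_\ell$. Each $\alpha \in \Aut G$ preserves the characteristic subgroup $G' = Z(G) = 0\times W$ and so induces $\varphi \in \GL_{\mathbb{Z}/p}(V)$ and $\ct\varphi \in \GL_{\mathbb{Z}/p}(W)$; preservation of commutators translates to the pseudo-isometry condition $b(u\varphi, v\varphi) = b(u,v)\ct\varphi$. To invert this, send $(\varphi;\ct\varphi) \in \Psi\Isom_{\mathbb{Z}/p}(b)$ to the map $(u;s) \mapsto (u\varphi;\, s\ct\varphi)$; substituting into \eqref{def:Baer-group} and using bilinearity of $b$ confirms this is an automorphism of $\Grp(b)$, which is claim (i) of the proposition. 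The kernel of $\Aut G \to \Psi\Isom(b)$ consists of the automorphisms trivial on both $V$ and $W$; any such map must have the form $(u;s) \mapsto (u;\, s + u\tau)$ for some $\tau : V \to W$, and respecting the Baer product forces $\tau \in \hom_{\mathbb{Z}/p}(V,W)$, which is claim (ii). The semidirect action in the statement drops out of conjugating $(u;s) \mapsto (u;\, s + u\tau)$ by $(u;s) \mapsto (u\varphi;\, s\ct\varphi)$ and reading off the resulting $\tau$.

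The main obstacle is the cohomological step: showing $c^{\mathrm{sym}}$ is a coboundary. The hypothesis $G^p = 1$ together with $p$ odd is essential, because without it symmetric $2$-cocycles can produce genuinely new abelian extensions (for instance $\mathbb{Z}/p^2$ as a non-split extension of $\mathbb{Z}/p$ by $\mathbb{Z}/p$). Once the correction $d$ is secured, the remainder of the argument is a careful bookkeeping exercise inside $\Grp(b)$, driven entirely by the Baer product formula and bilinearity of $b$.
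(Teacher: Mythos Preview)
Your argument is correct and, for the automorphism decomposition, essentially identical to the paper's: both use the surjection $\Aut G\to \Psi\Isom(\Bi(G))$ via restriction to $G/Z(G)$ and $G'$, identify the kernel with $\hom_{\mathbb{Z}/p}(V,W)$ acting as in (ii), and split it by the action in (i). Where you differ is in the first claim. The paper does not argue the isomorphism $G\cong\Grp(\Bi(G))$ at all; it simply cites \cite[Proposition~3.10]{Wilson:unique-cent}. Your cocycle route---split $c$ into $\tfrac12 b$ plus a symmetric $2$-cocycle, then use $G^p=1$ with $p$ odd to force the symmetric part to be a coboundary---is a valid, self-contained replacement for that citation. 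The mechanism you invoke is exactly the right one: the symmetric cocycle $c^{\mathrm{sym}}$ defines an abelian extension of $V$ by $W$, the exponent-$p$ hypothesis transfers to that extension (because $b(iv,v)=0$), so the extension is elementary abelian and hence split. One small bookkeeping slip: with your convention $c^{\mathrm{sym}}(u,v)=(u+v)d-ud-vd$, the map that actually verifies as a homomorphism into $\Grp(b)$ is $g\mapsto(\bar g;\,z_g-(\bar g)d)$ rather than $g\mapsto(\bar g;\,z_g+(\bar g)d)$; equivalently, choose $d$ with the opposite sign. This does not affect the substance of the argument.
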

\begin{proof}
For the isomorphism of $G$ to $\Grp(\Bi (G))$ see \cite[Proposition 3.10]{Wilson:unique-cent}.
For the remaining properties, observe $\Psi\Isom(\Bi (G))$ embeds in $\Aut \Grp(\Bi (G))$ as argued above.  Since $G' = Z(G)$ is characteristic, $\Aut G \to \Psi\Isom(\Bi (G))$ by $\phi\mapsto (\phi|_{G/Z(G)};\phi|_{G'})$.   The kernel is $C_{\Aut G}(G/Z(G))\cong \hom(G/Z(G),G')$ acting as described in (ii).  Compare \cite[Propositions 3.8]{Wilson:unique-cent}.
\end{proof}

\begin{remark}
A detour into abstraction explains a few subtle choices in our definitions.
Baer's design for $\Bi$ is more clever than our treatment in that the role of $Z(G)$ can be replaced with a normal subgroup $M$ between $G'$ and $Z(G)$.  This allows one to insist that $M$ be fully invariant, perhaps even $G'$.  That choice makes $G \mapsto \Bi_M(G)$ a functor from the category of nilpotent groups of class at most $2$ to the category of alternating bimaps equipped with an appropriate set of morphisms.  However, such bimaps can be degenerate.  Instead, our choice of $M = Z(G)$ establishes a functor from the category of nilpotent groups of class at most $2$ equipped with isoclinisms into the category of nondegenerate alternating bimaps equipped with $\mathbb{Z}$-pseudo-isometries.
\end{remark}

%-------
\section{Recognizing quotients of Heisenberg groups}\label{sec:rec}

In this section, we focus on determining when a group $G$ is an epimorphic image of a generalized Heisenberg group $H_m(K)$.  To be clear, we do not mean that $G$ should be specified by matrices over the field $K$, in fact, both $K$ and $m$ are not known at the start and instead the abstract group properties of $G$ must be used to reconstruct $K$ and $m$.   This is necessary since we might only know a set of generators as permutations or matrices for an arbitrary representation of $G$, or a polycyclic presentation of $G$.  In such instances, $K$ and $m$ are not provided.  Indeed, one may even ask if the field $K$ is necessary to define a generalized Heisenberg group, which we affirm by proving that one may always recover an isomorphic copy of $K$ from the multiplication of a generalized Heisenberg group (\thmref{thm:rec-Hei}).  Therefore, the representation of the group is irrelevant.  We then generalize this technique to recognize abstract groups that are epimorphic images of generalized Heisenberg groups (\thmref{thm:rec-q-Hei}).  The tools used to recognize these groups lead directly to the proofs of our main theorems in the following section.

%----------
\subsection{Centroids}\label{sec:centroid}
A \emph{centroid}\footnote{This definition is the generalization of centroids of non-associative rings \cite[pp. 147--153]{Kaplansky:rings}. For bimaps this appears for the first time in \cite{Myasnikov} under the name \emph{enrichment ring}, and in this general
form in \cite[Section 5.2]{Wilson:direct-prod}.} of an alternating bimap $b:V \times V \to W$ is a ring $C$ over which $b$ is a $C$-bimap and $C$ is universal with that property.  That is to say, if $b$ is also an $R$-bimap, then there is a unique homomorphism $\varphi:R \to C$ such that for all $r\in R$, all $v\in V$, and all $w\in W$, $vr = v(r\varphi)$ and $wr = w(r\varphi)$.  As with non-associative algebras (cf. \cite[pp. 147--153]{Kaplansky:rings}), a centroid $C$ for $b$ always exists and it can be described as the ring:
\begin{align*}
    \Cent(b) & = \{(f;h)\in\End V \times \End W:
        \forall u,v\in V,
         b (uf,v) = b (u,v)h = b (u,vf)\}.
\end{align*}
The universal property of a centroid for $b$ makes it unique to $b$, up to a canonical isomorphism.

If $b$ is nondegenerate and $b (V,V) = W$, then $\Cent(b)$ is commutative:  for all $(f;h),(f';h')\in \Cent(b)$,
all $u\in U$, and all $v\in V$
\begin{align*}
	b (u(ff'),v) & = b (uf,vf') = b (u,vf')h = b (uf',v)h = b (u(f' f), v).
\end{align*}
As $b$ is nondegenerate, $f f' = f'f$.\footnote{The basic heuristic used here is  a \emph{three-pile-shuffle}: given three piles of cards (the three places for the functions), by moving one card from the top of one pile to the top of another eventually every possible permutation of the three piles can be had.  We argue similarly later without details.}
If $(f;h), (f';h')\in \Cent(b)$ and $h = h'$, then
\begin{align*}
    b (u(f-f'),v) & = b (uf,v) - b (uf',v) = b (u,v)h - b (u,v)h' = 0.
\end{align*}
Hence, $u (f - f') = 0$ for all $u \in U$ so that $f = f'$.  In a similar fashion, it follows that $\Cent(b)$ is faithfully represented in its restriction to $W$.
In particular, if $j:V \times V \to W$ is a nondegenerate $K$-bimap for a field $K$ with $\dim_K W = 1$ (i.e. a $K$-form), then $K$ embeds in $\Cent(j)$ and so $K\hookrightarrow \Cent(j)|_W\subseteq \End_K W\cong K$; thus, $\Cent(j)\cong K$.  For more on centroids of bimaps see \cite[Section 5.2]{Wilson:direct-prod}.

We use the centroid to recover $K$ from the multiplication of a generalized Heisenberg group $H$ over $K$.

\begin{thm}\label{thm:rec-Hei}  Let $H$ be a finite group with $1 = H^p < H' = Z(H) < H$.
Then $H$ is a generalized Heisenberg group if and only if $\Cent(\Bi (H))$ is a field and $Z(H)$ is $1$-dimensional over $\Cent(\Bi (H))$.
\end{thm}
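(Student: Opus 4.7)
The plan is to handle each direction with the centroid formalism of \secref{sec:centroid} and Baer's correspondence from \secref{sec:Baer}. The forward direction should be essentially mechanical: assuming $H \cong H_m(K)$, \exref{ex:j-Hei} gives that $\Bi(H)$ is a nondegenerate alternating $K$-form, so $Z(H) = H'$ is visibly $1$-dimensional over $K$, and the centroid computation carried out just before the theorem produces the identification $\Cent(\Bi(H)) \cong K$, which is a field.

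For the reverse direction, the plan is to write $b = \Bi(H)$, $V = H/Z(H)$, $W = H'$, and $K = \Cent(b)$. Baer's theorem already delivers $b$ as an alternating nondegenerate $\mathbb{Z}/p$-bimap; the hypotheses promote it to a nondegenerate alternating $K$-form (with $W$ as its $1$-dimensional target, so cyclic as a $K$-module). The next step is to apply \remref{rem:one-alt-form} to produce a symplectic $K$-basis, which both forces $\dim_K V = 2m$ and supplies a $K$-pseudo-isometry from $b$ to the canonical form $j$ on $K^{2m}$; by \exref{ex:j-Hei}, $j$ is in turn $K$-pseudo-isometric to $\Bi(H_m(K))$. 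Since both $H$ and $H_m(K)$ satisfy $G^p = 1$ and $G' = Z(G)$, \propref{prop:Baer-correspondence} then converts this $\mathbb{Z}$-pseudo-isometry into the desired isomorphism $H \cong \Grp(b) \cong \Grp(\Bi(H_m(K))) \cong H_m(K)$.

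The main technical checkpoint will be verifying that $K$ has characteristic $p$, which is what allows $H_m(K)$ to sit inside the Baer framework and have exponent $p$. Because $H^p = 1$, $W$ is an elementary abelian $p$-group, hence a $\mathbb{Z}/p$-space; the faithful action of $K$ on $W$ (as established in the centroid discussion) then forces $p \cdot 1_K = 0$, giving $\mathrm{char}(K) = p$. Beyond this, nothing appears genuinely difficult: the conceptual content lies in recognizing that the field-plus-one-dimensional-center hypothesis is precisely what is needed to recast the commutator bimap as a nondegenerate alternating $K$-form, after which the uniqueness statement of \remref{rem:one-alt-form} and Baer's correspondence finish the job.
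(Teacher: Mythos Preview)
Your proposal is correct and follows essentially the same route as the paper's proof: in the forward direction you invoke \exref{ex:j-Hei} and the centroid computation preceding the theorem, and in the reverse direction you pass through \propref{prop:Baer-correspondence}, \remref{rem:one-alt-form}, and \exref{ex:j-Hei} to obtain $H\cong\Grp(\Bi(H))\cong\Grp(j)\cong H_m(K)$. Your explicit verification that $\mathrm{char}(K)=p$ is a nice point of care that the paper leaves implicit.
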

\begin{proof}
For the forward direction, let $H$ be a generalized Heisenberg group.  By \exref{ex:j-Hei}, the map $\Bi (H) : V \times V \to W$, where $V = H/Z(H)$ and $W = H'$, is $\mathbb{Z}/p$-pseudo-isometric to a nondegenerate alternating $K$-form $j : K^{2m} \times K^{2m} \to K$, for some field $K$.  As above, $\Cent(\Bi (H))\cong K$, and as $W$ is $1$-dimensional over $K$, $W$ is also $1$-dimensional over $\Cent(\Bi (H))$.

Now, for the converse, suppose that $H$ is a finite group with $1 = H^p < H' = Z(H) < H$ and that $K := \Cent(\Bi (H))$ is a field with $H'$ a one-dimensional vector space over $K$.  By \propref{prop:Baer-correspondence}, $H$ is isomorphic to $\Grp(\Bi (H))$.  Our $\Bi (H)$ is a nondegenerate alternating $K$-form.  So, there is a $K$-pseudo-isometry $(\varphi;\hat{\varphi})$ from $\Bi (H)$ to $j:K^{2m} \times K^{2m} \to K$ as in \eqref{eq:alt} where $2m = \dim_K H/H'$ (\remref{rem:one-alt-form}).   Hence, $\Grp(\Bi (H))\cong \Grp(j)\cong H_m(K)$ (the final isomorphism from \propref{prop:Baer-correspondence} and \exref{ex:j-Hei}).  Therefore, $H$ is a generalized Heisenberg group over $K$.
\end{proof}

%-------
\subsection{Quotients of Heisenberg groups}\label{sec:q-Hei}
In this section, we focus on quotients of generalized Heisenberg groups $H$ and derive their initial properties.  Throughout this section, $H$ is a generalized
Heisenberg group.

\begin{lemma}\label{lem:Camina}
If $H$ is a generalized Heisenberg group, then
\begin{enumerate}[(i)]
\item for all $g \in H-H'$, $[g,H] = H'$ (equivalently $g^H = gH'$),
\item $H' = Z(H)$, and
\item For all $N\leq H$, $N\normaleq H$ if and only if $N \leq H'$ or $H'\leq N$.
\end{enumerate}
\end{lemma}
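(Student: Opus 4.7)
The plan is to reduce the three parts to statements about the alternating $K$-form $j$ on $K^{2m}$ via Baer's correspondence, using \exref{ex:j-Hei} and \propref{prop:Baer-correspondence} to identify $H$ with $\Grp(j)$. Under this identification, $V = H/Z(H)$ carries the structure of $K^{2m}$, $W = H' = Z(H)$ is isomorphic to $K$ as a $K$-space, and commutators in $H$ correspond to values of $j$ by formula \eqref{eq:comm}.

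Part (ii) is immediate from \exref{ex:j-Hei}, which explicitly records $H' = Z(H)$ for $H_m(K)$.

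For part (i), let $g \in H - H'$ and write $\bar g \in V$ for the coset $Z(H)g$, which is nonzero since $H' = Z(H)$. Then
\[
 [g,H] \;=\; \langle [g,h] : h \in H\rangle \;=\; \langle j(\bar g,\bar h) : \bar h \in V\rangle \;=\; j(\bar g, V),
\]
viewed as a $\mathbb{Z}/p$-subspace of $W$. The key point is that because $j$ is a \emph{$K$-form} (that is, $W$ is one-dimensional over $K$) and $j$ is nondegenerate, the set $j(\bar g, V)$ is actually a $K$-subspace of $W$: for $r \in K$ and $\bar h \in V$, $r\cdot j(\bar g,\bar h) = j(\bar g, r\bar h)$. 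Nondegeneracy forces $j(\bar g, V) \neq 0$, and since $\dim_K W = 1$, any nonzero $K$-subspace equals $W$. Thus $[g,H] = H'$. The restated form $g^H = gH'$ is then immediate from $g^h = g[g,h]$.

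Part (iii) will follow from (i) by a short normal-subgroup argument. If $N \normaleq H$ and $N \not\leq H'$, pick any $g \in N$ with $g \notin H'$; then $[g,H] \leq N$, and (i) gives $H' = [g,H] \leq N$. For the converse, if $N \leq H' = Z(H)$ then $N$ is central, hence normal. If instead $H' \leq N$, then $N/H'$ is a subgroup of the abelian group $H/H'$, hence normal in $H/H'$, so $N \normaleq H$ by the correspondence theorem.

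The only step requiring care is the $K$-linearity observation inside (i) that promotes $j(\bar g, V)$ from a $\mathbb{Z}/p$-subspace to a $K$-subspace of $W$; this is precisely where the hypothesis that $\Cent(\Bi(H))$ contains $K$ (i.e.\ that $H$ is a \emph{generalized Heisenberg} group and not merely a class-$2$ exponent-$p$ group) is used. The remainder is routine bookkeeping under the Baer correspondence.
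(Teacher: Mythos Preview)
Your proof is correct and follows essentially the same route as the paper's: part (ii) from \exref{ex:j-Hei}, part (i) by observing that $j(\bar g,V)$ is a nonzero $K$-subspace of the one-dimensional $K$-space $W$, and part (iii) from (i) exactly as you wrote. The only difference is cosmetic: the paper works directly with $\Bi(H)$ (whose definition already gives $[g,h]=j(\bar g,\bar h)$) rather than invoking \propref{prop:Baer-correspondence} and \eqref{eq:comm}, so your detour through $\Grp(j)$ is unnecessary but harmless.
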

\begin{proof}
As in \exref{ex:j-Hei}, $H' = Z(H)$, $\Bi (H)$ is a nondegenerate alternating $K$-form, for the field $K = \Cent(\Bi (H))$,  and $H'$ is a $1$-dimensional $K$-vector space (\thmref{thm:rec-Hei}).  In particular, for each $g \in H - H'$, $u = Z(H)g$ is non-zero so $[g,H] = j(u,H/Z(H)) = K = H'$, so (i) holds.   Finally, for (iii) in the forward direction, if $g\in N - H'$, then $H'\leq [g,H]\leq N$.  For the converse, observe that $H' = Z(H)$, so all its subgroups are normal in $H$.  Likewise, all subgroups containing $H'$ are normal in $H$.
\end{proof}

Groups with the property of \lemref{lem:Camina}(i) are called {\em Camina groups}.  Note that all Camina groups of nilpotence class $2$ satisfy conditions (ii) and (iii).  These groups have many strong properties some of which contribute to the similarities between the many quotients of $H = H_m(K)$, and so, we return to this point of view in Section \ref{sec:invariants}.  For now, we simply note that the quotients of $H$ by normal subgroups containing $H'$ are elementary abelian and so unremarkable.  Thus, we only consider the remaining normal subgroups -- those properly contained in $H'$.

Fix a nonabelian group $G$ of class $2$ and an epimorphism $\phi:H \to G$.  First, we obtain alternating bimaps $j' = \Bi (H)$ and $b = \Bi (G)$.  As $G$ is nonabelian, by \lemref{lem:Camina}, $\ker \phi\leq H' = Z(H)$ and so $\phi$ factors through the natural $\mathbb{Z}/p$-linear isomorphism $\varphi : H/H' \to G/G'$ and also induces a $\mathbb{Z}/p$-linear epimorphism $\ct{\varphi}:H' \to G'$ where $\ker \ct{\varphi} = \ker\phi$.  It follows that $b(u\varphi,v\varphi)=j'(u,v)\ct{\varphi}$.  Indeed, $\varphi$ is invertible so we induce an alternating nondegenerate $K$-form $j:G/Z(G) \times G/Z(G) \to K$ by assigning $j(u,v) = j'(u\varphi^{-1}, v\varphi^{-1})$.  We observe that $b = j\ct{\varphi}$.  Thus, we have translated from epimorphisms of generalized Heisenberg groups over $K$ to alternating $\mathbb{Z}/p$-bimaps that factor through nondegenerate alternating $K$-forms.

We can also reverse the above translation as follows.  If $j : V \times V \to K$ is a nondegenerate alternating $K$-form on a $K$-vector space $V$ and $\pi:K\to W\neq 0$ is an epimorphism, then $(v,s) \mapsto (v,s\pi)$ is a group epimorphism from $\Grp (j)$ to $\Grp (j\pi)$.  Notice $H=\Grp(j)$ is generalized Heisenberg group and $\Grp(j\pi)$ is an epimorphic image of $H$.

We conclude that to study epimorphic images of a generalized Heisenberg group it suffices to study the $\mathbb{Z}/p$-bimap $j\pi$.  To study such bimaps, we introduce the ring of adjoints.

%-----
\subsection{Adjoints}
For a ring $R$, an \emph{$R$-mid-linear bimap} is a bimap $b:U \times  V \to W$ where $U$ is a right $R$-module, $V$ is a left $R$-module, and $b$ factors through the  $R$-tensor product $\otimes_R:U \times V \to U\otimes_R V$.  An \emph{adjoint} ring of a bimap $b:U \times V \to W$ is a ring $A$ over which $b$ is $A$-mid-linear and $A$ is universal with that property.  That is, whenever $b$ is $R$-mid-linear for some $R$, there is a unique homomorphism $\varphi:R \to A$ such that for all $r\in R$, all $u\in U$, and all $v\in V$, $ur = u(r\varphi)$ and $rv = (r\varphi)v$.  As with centroids (cf. Section \ref{sec:centroid}), an adjoint ring $A$ for $b$ exists and, up to a unique isomorphism, we may assume $A$ is:
\begin{align*}
    \Adj (b) = \{ (f,g)\in \End U \times (\End V)^{op} : \forall u\in U,\forall v\in V,~
        b (uf,v) = b (u,vg)~\}.
\end{align*}
In general, if $A \subseteq \End_K U \times (\End_K V)^{op}$, then $U$ is the right $A$-module and $V$ is a left $A$-module by assigning the actions: for all $(a,a') \in A$,  $u (a,a') = ua$, for all $u\in U$; and $(a,a')v = va'$, for all $v \in V$ (where we implicitly involve the property that composition in $\End_K V$ is as $(ab)^{op} = b^{op} a^{op}$, for $a,b \in \End_k V$).  So indeed, we are able to form $U \otimes_{\Adj (b)} V$ from the above definition.  The universal property follows immediately.

Adjoint rings in this generality seem to have appeared first in the study of central products \cite[Section 4]{Wilson:unique-cent}, and we will return to those implications in Section \ref{sec:invariants}.
\begin{ex}\label{ex:adj-j}
Let $K$ be a field.
If $j:K^{2m} \times K^{2m} \to K$ is the nondegenerate alternating $K$-form in \eqref{eq:alt}, then
\begin{align}
\label{eq:adj-j}
    \Adj (j) & = \left\{\left(\begin{bmatrix} A & B \\ C & D \end{bmatrix},
        \begin{bmatrix} D^t & -B^t \\ -C^t & A^t \end{bmatrix}\right): A,B,C,D\in M_m(K)\right\}.
\end{align}
\end{ex}

We have two important actions by $\GL_K(V)$. First, for each $x\in \GL_K(V)$ and each $\sum_i u_i \otimes v_i \in V \otimes_K V$, 
\begin{align*}
	\left( \sum_i u_i \otimes v_i \right)^x & = \sum_i (u_i x \otimes v_i x).
\end{align*}
Second, for each $x\in \GL_K(V)$ and each $(a,a') \in \End_K V \times (\End_K V)^{op}$, 
\begin{align*}
	(a,a')^x & = (x^{-1} a x, x^{-1} a'x) = (a^x, (a')^x).
\end{align*}
Hence, if $A \subseteq \End_K V \times (\End_K V)^{op}$ then $(V \otimes_A V)^x = V\otimes_{A^x} V$ and
$x$ induces a $K$-pseudo-isometry $(x;\ct{x})$ from $\otimes_A$ to $\otimes_{A^x}$.

Suppose $b$ is nondegenerate.  For all pairs $(f,g), (f',g') \in \Adj (b)$, if either $f = f'$ or $g = g'$ then $(f,g) = (f',g')$.  Thus, the projection $\Adj (b)|_U$ of $\Adj (b) \subseteq \End U \times (\End V)^{op}$ to $\End U$ is faithful.  As defined, the adjoint ring appears to involve $\mathbb{Z}$-linear endomorphisms.  However, a three-pile-shuffle shows that if $b$ is a $K$-bimap and $(f,g)\in \Adj (b)$ then both $f$ and $g$ are $K$-linear.  Hence, as $b$ is nondegenerate, $\Adj (b)|_U \subseteq \End_{\Cent (b)} U$.  Observe $\Cent (b)$ embeds in the center of $\Adj (b)$, again argued by a three-pile-shuffle; however, there are instances where the center of $\Adj (b)$ is larger than the image of $\Cent (b)$.

When $b$ is alternating, we must have $U = V$, and for every $(f,g) \in \Adj (b)$, it follows that $(g,f) \in \Adj (b)$.  More generally, we say  $b : V \times V \to W$ is \emph{Hermitian} if there is a $\theta \in \GL_{\mathbb{Z}/p} (W)$ such that for all $u, v \in V$, $b (v,u) = b (u,v)\theta$.  When $b$ is Hermitian, we see that $(f,g) \in \Adj (b)$ if and only if $(g,f)\in \Adj (b)$. Hence, $*:(f,g) \mapsto (g,f)$ is an anti-isomorphism of order at most $2$ on $\Adj (b)$; that is, it is an \emph{involution}.  When $b$ is nondegenerate and Hermitian, an involution is induced on $\Adj (b)|_V$, and we denote this involution by $f \mapsto f^*$ where $(f,f^*) \in \Adj (b)$.  For further details, see \cite[Section 3]{Wilson:unique-cent}.  We shall need the generality of Hermitian bimaps only long enough to prove that in our context every bimap we rely on remains alternating.

In general, for a ring $A$ if $V$ is a right $A$-module and $*$ is an involution on $A$, then we may treat $V$ also as a left $A$-module under the action $av := va^*$. For added clarity, we sometimes express this module by $V^*$.  Therefore, the map $\otimes_A : V \times V \to V \otimes_A V^*$ is defined.  Indeed, if $A = \Adj (b)|_V$ for a Hermitian bimap $b:V \times V \to W$, then $V \otimes_{\Adj (b)}V$ (as explained by the definition of $\Adj (b)$) is nothing other than $V \otimes_A V^*$.

First, we cite the following classic fact; cf. \cite[IX.10-11]{Jacobson} or \cite[Section 5.2]{Wilson:algo-cent}.

\begin{thm}\label{thm:*-simples}
Let $K$ be a finite field and $V$ a finite-dimensional $K$-vector space. If $A = \End_K V$ and $*$ is an involution on $A$, then there is a nondegenerate Hermitian $K$-form $d : V \times V \to K$ such that $A = \Adj (d)|_V$ with the involutions also equal.
\end{thm}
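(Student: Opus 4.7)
The plan is to produce $d$ from a module-theoretic isomorphism $V \cong V^{\vee}$ that already ``knows about'' the involution $*$. Regard $V$ as a left $A$-module in the usual way, and turn the $K$-dual $V^{\vee} = \hom_K(V,K)$ into a left $A$-module by the twisted action $(a\cdot \phi)(v) := \phi(a^* v)$. Because $A \cong M_n(K)$ is a simple $K$-algebra with a unique simple left module up to isomorphism, and since $V$ and $V^{\vee}$ are both simple left $A$-modules of the same $K$-dimension, there exists a left $A$-module isomorphism $\sigma : V \to V^{\vee}$. This step is where the given involution enters the construction, and after it the remaining work is essentially unpacking definitions.

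Next, define $d : V \times V \to K$ by $d(u,v) := \sigma(u)(v)$. It is $K$-bilinear by construction, and it is nondegenerate because $\sigma$ is an isomorphism (the left radical forces $\sigma(u)=0$, hence $u=0$; the right radical forces $\sigma(u)(v)=0$ for all $u$, but $\sigma$ is surjective). The $A$-linearity $\sigma(au) = a\cdot\sigma(u)$ translates directly into
\[
   d(au,v) \;=\; \sigma(au)(v) \;=\; \bigl(a\cdot\sigma(u)\bigr)(v) \;=\; \sigma(u)(a^*v) \;=\; d(u,a^*v),
\]
so $(a,a^*) \in \Adj(d)$ for every $a \in A$. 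Since $\Adj(d)|_V \subseteq \End_K V = A$, we obtain $\Adj(d)|_V = A$ together with the matching involution.

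Finally, I verify that $d$ is Hermitian. Set $\tilde d(u,v) := d(v,u)$, another nondegenerate $K$-bilinear form. A direct three-pile-shuffle using $(a,a^*) \in \Adj(d)$ shows $(a,a^*) \in \Adj(\tilde d)$ as well, so $\tilde d$ induces the same adjoint involution $*$ on $A$. The key sublemma is then: any two nondegenerate $K$-bilinear forms on $V$ that induce the same adjoint involution differ by a scalar in $K^{\times}$. To see this, write $\tilde d(u,v) = d(\phi(u),v)$ for a unique $K$-linear $\phi$; the coincidence of involutions forces $\phi a = a\phi$ for every $a \in A$, so $\phi \in Z(A) = K\cdot I$, giving $\tilde d = \lambda d$ for some $\lambda \in K^{\times}$. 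Thus $d(v,u) = \lambda\, d(u,v)$, which is the Hermitian property with $\theta$ equal to multiplication by $\lambda$, an element of $\GL_{\mathbb{Z}/p}(K)$. The only genuine pitfall is bookkeeping of left versus right actions when twisting by $*$; once the module conventions are fixed in Step~1, every remaining verification is immediate.
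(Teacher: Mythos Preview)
Your proof is correct. The paper does not actually supply a proof of this statement; it is cited as a classical fact with references to Jacobson \cite[IX.10--11]{Jacobson} and \cite[Section~5.2]{Wilson:algo-cent}. What you have written is essentially the standard argument one finds in those sources: realize $V^{\vee}$ as a left $A$-module via the $*$-twisted action, invoke uniqueness of the simple module over $M_n(K)$ to get $\sigma:V\cong V^{\vee}$, read off $d$ from $\sigma$, and then use that two nondegenerate forms inducing the same adjoint involution on $\End_K V$ differ by a central scalar to obtain the Hermitian condition.

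One cosmetic remark: the paper's conventions place endomorphisms on the right (so $\Adj(b)$ is defined by $b(uf,v)=b(u,vg)$), whereas you have written everything with left actions. This does not affect correctness, but if you want the write-up to mesh with the surrounding text you should transpose accordingly. Also note that your scalar $\lambda$ automatically satisfies $\lambda^2=1$ (apply the relation $d(v,u)=\lambda\,d(u,v)$ twice), so in fact $\theta$ is multiplication by $\pm 1$; this is consistent with the paper's broad definition of Hermitian but is worth recording.
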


\thmref{thm:*-simples} allows us to invoke the classifications of nondegenerate Hermitian forms (which in our context includes alternating and symmetric forms as well as the typical Hermitian form).  That classification will be used to prove the next Theorem.

\begin{thm}\label{thm:simple-adj-tensor}
Let $K$ be a finite field and $V$ a finite-dimensional $K$-vector space.  If $A = \End_K V$ and $*$ is an involution on $A$, then $V \otimes_{A} V^* \cong K$; in particular, $\otimes_A : V \times V \to V \otimes_A V^*$ is a nondegenerate $K$-form.  Moreover, if $A$ is isomorphic to $\Adj (j)$ (as $*$-rings) for a nondegenerate alternating $K$-form $j : V \times V \to K$, then $j = \otimes_A \hat{\j}$ for a $K$-linear isomorphism $\hat {\j} : V \otimes_A V^* \to K$; indeed, $\otimes_A$ is an alternating nondegenerate $K$-form on $V$.
\end{thm}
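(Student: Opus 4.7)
The plan is to first invoke Theorem~\ref{thm:*-simples} to produce a nondegenerate Hermitian $K$-form $d : V \times V \to K$ such that $\Adj(d)|_V = A$ and the induced involution coincides with $*$. By the definition of an adjoint ring, $d$ is $A$-mid-linear, so the universal property of the $A$-tensor product furnishes a unique $K$-linear map $\hat d : V \otimes_A V^* \to K$ with $d = \otimes_A \hat d$. Since $d$ is surjective onto $K$ (being nondegenerate and nonzero), $\hat d$ is surjective, which already gives $\dim_K(V \otimes_A V^*) \geq 1$.

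The crux is the reverse inequality $\dim_K(V \otimes_A V^*) \leq 1$, where the hypothesis $A = \End_K V$ is essential. Because $A \cong M_n(K)$ with $n = \dim_K V$, it is a central simple $K$-algebra, $V$ is the (up to isomorphism) unique simple right $A$-module, and the twisting by $*$ turns $V^*$ into a simple left $A$-module. By Morita equivalence between $A$ and its center $K$ (implemented by the progenerator $V$), the functor $V \otimes_A (-)$ sends simple left $A$-modules to one-dimensional $K$-spaces; equivalently, a direct matrix-unit calculation exploiting the transitive action of $A$ on $V \setminus \{0\}$ shows that every elementary tensor $u \otimes v$ can be rewritten as $u_0 \otimes v'$ for a fixed nonzero $u_0$, and the stabilizer of $u_0$ collapses these further into a single $K$-line. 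In either form one concludes $V \otimes_A V^* \cong K$. Hence $\hat d$ is a $K$-isomorphism, and $\otimes_A : V \times V \to V \otimes_A V^*$ is a nondegenerate $K$-form (nondegeneracy is transferred from $d$ via $\hat d$).

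For the ``moreover'' clause, assume $A \cong \Adj(j)$ as $*$-rings for a nondegenerate alternating $K$-form $j$. Then $j$ is $A$-mid-linear by the definition of $\Adj(j)$, and therefore factors through the tensor product as $j = \otimes_A \hat j$ for some $K$-linear $\hat j : V \otimes_A V^* \to K$. Since $j$ is nonzero and both domain and codomain are one-dimensional over $K$, $\hat j$ is a $K$-linear isomorphism. Finally, for every $u \in V$ we have $\otimes_A(u,u) \hat j = j(u,u) = 0$, and injectivity of $\hat j$ forces $\otimes_A(u,u) = 0$, so $\otimes_A$ is alternating as well.

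The main obstacle is the dimension-one identity $V \otimes_A V^* \cong K$: once this is established, both the factorization of $d$ (for the first statement) and the factorization of $j$ (for the ``moreover'' clause) are routine applications of the universal property of the $A$-tensor product, and the alternating and nondegeneracy properties transfer along the resulting $K$-isomorphism.
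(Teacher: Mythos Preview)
Your argument for the first assertion is correct and takes a genuinely different route from the paper. You obtain $\dim_K(V\otimes_A V^*)=1$ via Morita theory: since $A\cong M_n(K)$ is simple, $V$ is the unique simple right $A$-module, the twist $V^*$ is a simple left $A$-module, and the Morita equivalence $V\otimes_A(-)$ carries it to a one-dimensional $K$-space. The paper instead invokes \thmref{thm:*-simples} to realize $(A,*)=\Adj(d)$, decomposes $d$ into hyperbolic lines and anisotropic points (\exref{ex:form-decomp}), translates this into $*$-invariant-primitive idempotents of $A$ via \thmref{thm:perp-idemp}, and then applies the low-dimensional computation of \lemref{lem:ext} on each $\perp$-summand. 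Your approach is shorter and avoids the idempotent bookkeeping; the paper's approach is more explicit and keeps the geometry of $d$ visible, which is consonant with how the machinery is reused later.

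There is, however, a genuine gap in your treatment of the ``moreover'' clause. The hypothesis is that $(A,*)$ is \emph{$*$-isomorphic} to $\Adj(j)$, not equal to it. Concretely, $A=\End_K V=\Adj(j)|_V$ as rings, but the given involution $*$ on $A$ need not coincide with the adjoint involution $*_j$ determined by $j$; one only knows they are conjugate under some ring automorphism of $\End_K V$. Your sentence ``Then $j$ is $A$-mid-linear by the definition of $\Adj(j)$'' asserts $j(ua,v)=j(u,va^{*})$, whereas what holds is $j(ua,v)=j(u,va^{*_j})$. Thus $j$ need not factor through $\otimes_A$ at all when $*\neq *_j$, and the subsequent line ``$\otimes_A(u,u)\hat\j=j(u,u)=0$'' is unsupported. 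The paper closes exactly this gap with Skolem--Noether: the $*$-isomorphism is conjugation by some $x\in\GL_K(V)$, giving a pseudo-isometry from $\otimes_A$ to $\otimes_{\Adj(j)}$; since the latter is $K$-pseudo-isometric to $j$ (here your factorization argument \emph{is} valid, because the involutions agree by construction), $\otimes_A$ is alternating. Equivalently, you could note that the form $d$ from your first paragraph already satisfies $\Adj(d)=(A,*)\cong\Adj(j)$, hence $d$ is similar to $j$ and therefore alternating, and then $\otimes_A$ inherits this through the isomorphism $\hat d$.
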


Our proof of \thmref{thm:simple-adj-tensor} uses some vocabulary borrowed from \cite[Sections 3--4]{Wilson:unique-cent}.  Suppose that $b : V \times V \to W$ is a nondegenerate Hermitian $k$-bimap.  A $\perp$-decomposition is a $\oplus$-decomposition $V = X_1 \oplus \cdots\oplus X_s$ where none of the $X_i$ are trivial and for all $1 \leq i < j \leq s$, we have $b (X_i,X_j) = 0$ (which implies  $b (X_j,X_i) = 0$).  We denote this by $b = (b|_{X_1}) \perp \cdots \perp (b|_{X_s})$.  Observe that $b$ is conceptually an `orthogonal sum' in the following sense:
\begin{align}
    b (x_1 + \cdots +x_s, x'_1 + \cdots + x'_s) & = b (x_1, x'_1) + \cdots + b (x_s, x'_s)
\end{align}
where for each $i$ satisfying $1 \le i \le s$, we have $x_i,x'_i\in X_i$.  A Hermitian bimap $b$ is \emph{$\perp$-indecomposable} if it has exactly one $\perp$-decomposition.  A $\perp$-decomposition is \emph{fully refined} if its constituents are $\perp$-indecomposable.

\begin{ex}\label{ex:form-decomp}
For a finite field $K$, every nondegenerate Hermitian $K$-form $d$ has a fully refined $\perp$-decomposition into \emph{hyperbolic lines} $\langle e,f\rangle$ (where $d(e,e)=0=d(f,f)$ and $d(e,f)=1$), and \emph{anisotropic} points $\langle u\rangle$ (where $d(u,u)\neq 0$).
\end{ex}

\begin{lemma}\label{lem:ext}
Let $K$ be a finite field and $V$ and $W$ two $K$-vector spaces.  If $d : V \times V \to W$ is a $\perp$-indecomposable nondegenerate Hermitian $K$-form, then $\dim_K (V \otimes_{\Adj (d)} V)=1$.  Furthermore, if $\dim V = 2$ then $\otimes_{\Adj (d)} : V \times V \to V \otimes_{\Adj (d)} V$ is an alternating nondegenerate form.
\end{lemma}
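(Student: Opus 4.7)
The plan is to use Example~\ref{ex:form-decomp} to classify the $\perp$-indecomposable nondegenerate Hermitian $K$-forms. Since $d$ is a $K$-form, $W$ is cyclic over $K$ and nonzero (as $d$ is nondegenerate), so $W\cong K$ as a $K$-module. The Hermitian involution $\theta\in\End_K W$ is then a scalar with $\theta^2=1$, so $\theta=\pm 1$; that is, $d$ is either symmetric or alternating. Over a finite field of odd characteristic, a $2$-dimensional hyperbolic \emph{symmetric} form diagonalizes (take $u=e+f$, $v=e-f$ to get an orthogonal basis with $d(u,u)=2$ and $d(v,v)=-2$), so it is not $\perp$-indecomposable. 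Hence, under the hypotheses of the lemma, only two cases can occur: either (a) $\dim V=1$ with $d$ anisotropic symmetric, or (c) $\dim V=2$ with $d$ alternating hyperbolic.

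In case (a), with $V=Ku$ and $d(u,u)=\alpha\neq 0$, a direct computation gives $\Adj(d)=\{(\lambda,\lambda):\lambda\in K\}\cong K$ with trivial involution, so $V\otimes_{\Adj(d)}V\cong Ku\otimes_K Ku\cong K$ is $1$-dimensional. Non-triviality is automatic since $d\neq 0$ factors through $\otimes_{\Adj(d)}$ by the universal property of the adjoint ring applied to the $A$-mid-linear bimap $d$ (with $A=\Adj(d)$).

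In case (c), fix a hyperbolic basis $\{e,f\}$ with $d(e,f)=1$ and Gram matrix $G=\begin{bsmallmatrix} 0 & 1 \\ -1 & 0 \end{bsmallmatrix}$. The condition $(A,B)\in\Adj(d)$ becomes $AG=GB^t$, which forces $B=GA^t G^{-1}$, so $\Adj(d)\cong M_2(K)$ with the involution sending $\begin{bsmallmatrix} a & b \\ c & d\end{bsmallmatrix}$ to $\begin{bsmallmatrix} d & -b \\ -c & a\end{bsmallmatrix}$. Using the tensor relation $v\cdot a\otimes w=v\otimes a\cdot w$, where the left action is $a\cdot w=w\cdot\bar{a}$, I would compute with matrix units $E_{ij}$: first $e\otimes e=e\cdot E_{11}\otimes e=e\otimes\overline{E_{11}}\cdot e=e\otimes e\cdot E_{22}=0$, and analogously $f\otimes f=0$; next $e\otimes f=e\otimes\overline{E_{12}}\cdot e=e\cdot(-E_{12})\otimes e=-f\otimes e$. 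These identities exhaust $V\otimes_{\Adj(d)}V$, so it is spanned by $e\otimes f$; the same universal-property argument as in case (a) shows it is non-zero, hence $1$-dimensional.

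For the alternating assertion in the $\dim V=2$ case, expanding
\[(\alpha e+\beta f)\otimes(\alpha e+\beta f)=\alpha^2(e\otimes e)+\alpha\beta(e\otimes f+f\otimes e)+\beta^2(f\otimes f)=0\]
and appealing to the three identities $e\otimes e=f\otimes f=0$ and $e\otimes f+f\otimes e=0$ completes the proof. The main subtlety is sign bookkeeping in the involution on $\Adj(d)$ and in the matrix-unit shuffle: it is precisely the minus sign in $\overline{E_{12}}=-E_{12}$ (coming from the alternating Gram matrix) that forces $e\otimes f=-f\otimes e$ and hence the alternating identity, while the structural part — the classification of $\perp$-indecomposables as the two listed cases — is where the finite field of odd characteristic enters decisively.
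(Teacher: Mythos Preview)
Your proof is correct and follows essentially the same route as the paper's: both invoke \exref{ex:form-decomp} to reduce to $\dim_K V\in\{1,2\}$, handle the one-dimensional case by observing $\Adj(d)\cong K$, and in the two-dimensional case argue that $d$ must be alternating and then compute the tensor over $\Adj(d)\cong M_2(K)$ with the symplectic involution.

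The only noteworthy differences are cosmetic. First, your detour through the classification $\theta=\pm 1$ is unnecessary: the paper simply observes that any anisotropic vector $u$ would give $V=\langle u\rangle\oplus u^{\perp}$, contradicting $\perp$-indecomposability, and this works without appealing to odd characteristic. Second, for the tensor computation the paper uses a single adjoint pair $\left(\begin{bsmallmatrix} b & b\\ -a & -a\end{bsmallmatrix},\begin{bsmallmatrix} -a & -b\\ a & b\end{bsmallmatrix}\right)$ to show $[a,b]\otimes_A[a,b]=0$ for every $[a,b]\in K^2$ in one stroke, whereas you establish the three relations $e\otimes e=f\otimes f=0$ and $e\otimes f+f\otimes e=0$ via matrix units and then expand bilinearly. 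Both arguments land on the surjection $K^2\wedge_K K^2\twoheadrightarrow K^2\otimes_A K^2$ and the factoring of $d$ through $\otimes_A$ to pin the dimension at exactly $1$.
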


\begin{proof}
By \exref{ex:form-decomp}, $0 < \dim_K V \leq 2$.

If $V = Kv$ for some $0 \neq v \in V$ then $\End_K V = \{ (v \mapsto sv) : s \in K \}$. As $d$ is a nondegenerate $K$-form, $\Cent(d) \cong K$.  Also, $\End_k V = \Cent (d)|_V \subseteq \Adj (d)|_V \subseteq \End_K V$ so that $\Cent (d)|_V = \Adj (d)|_V$.  As $\Adj (d)$ is faithfully represented as $K$-endomorphisms on $V$, $\Adj (d) = \{ (v \mapsto sv, v \mapsto sv) : s \in K \}$.  It follows that $(\alpha v) \otimes (\beta v) \mapsto \alpha \beta$ determines an isomorphism $V \otimes_{\Adj (d)} V \cong K$ as $K$-vector spaces.

Now, let $\dim_K V = 2$; that is $V = \langle e,f \rangle$ where $d (e,e) = 0 = d (f,f)$ and $d (e,f) = 1$.  If $u \in V$ is such that $d (u,u) \neq 0$, then $\langle u \rangle \cap u^{\perp} = 0$, and so, $d$ has a $\perp$-decomposition $V = \langle u \rangle \oplus u^{\perp}$.  Yet, we are assuming that $d$ is $\perp$-indecomposable, and so, $d$ must be alternating. Hence, in the $e,f$ basis, $d (u,v) = u \begin{bmatrix} 0 & 1\\ -1  & 0 \end{bmatrix} v^t$.
As $\left(\begin{bmatrix}b & b \\ -a & -a \end{bmatrix},\begin{bmatrix} -a & -b \\ a & b \end{bmatrix}\right)\in \Adj (d) =: A$, for all $[a,b] \in K^2$:
\begin{align*}
    0\otimes_A 0 & =    [a,b]\begin{bmatrix} b & b \\ -a & -a \end{bmatrix} \otimes_A [0,1]
        = [a,b]\otimes_A [0,1]\begin{bmatrix} -a & -b \\ a & b\end{bmatrix}
        = [a,b]\otimes_A [a,b].
\end{align*}
Thus, $K \cong K^2 \wedge_K K^2 := K^2 \otimes_K K^2/\langle u \otimes u : u \in K^2 \rangle$ maps $K$-linearly onto $K^2 \otimes_{A} K^2$; so, $\dim_K (K^2 \otimes_A K^2)\leq 1$.  By the definition of the adjoint ring, $d$ factors through $K^2 \otimes_A K^2$ so there is a canonical non-trivial $K$-linear mapping $\hat {d}$ of $K^2 \otimes_A K^2$ into $K$.  By considering dimensions, we see that $\hat {d}$ is a $K$-linear isomorphism.
\end{proof}

Now, we translate these geometric notions into ring theory so that we may prove \thmref{thm:simple-adj-tensor}.  In a ring $A$ with involution $*$, we call an element $e\in A$ \emph{$*$-invariant} if $e^*=e$.  If $e^2=e\neq 0$, then we say $e$ is \emph{idempotent}.  Two idempotents $e,f\in A$ are \emph{orthogonal} if $ef=0=fe$.  We say $e$ is a \emph{$*$-invariant-primitive idempotent} if $e^*=e=e^2\neq 0$ and $e$ is not the sum of two orthogonal $*$-invariant idempotents (noting that in the convention of Curtis-Reiner we do not permit $0$ as an idempotent).  Every finite $*$-ring $A$ has a set $\mathcal{E}$ of pairwise orthogonal $*$-invariant-primitive idempotents that sum to $1$. See
\cite[Section 4]{Wilson:algo-cent}.

In general, $\perp$-decompositions are difficult to recognize for arbitrary bimaps, and the key tool is to describe these decompositions through the ring $\Adj (b)$.
When $A\subseteq \End V$ and $e\in A$ is an idempotent, we know that $V=Ve\oplus V(1-e)$.  Now, if $e\in \Adj (b)$ is a $*$-invariant idempotent, then $b (Ve,V(1-e))=b (V,V(1-e)e)=0$ so that $b=(b|_{Ve})\perp (b|_{V(1-e)})$.  This process can also be reversed.  These are the mechanics that underpin the following tool.

\begin{thm}\cite[Corollary 4.5]{Wilson:algo-cent}\label{thm:perp-idemp}
The fully refined $\perp$-decompositions of a nondegenerate Hermitian bimap $b$ are in one-to-one correspondence with the sets of pairwise orthogonal $*$-invariant-primitive idempotents of $\Adj (b)$ that sum to $1$.
\end{thm}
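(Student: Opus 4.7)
The plan is to build the correspondence explicitly in both directions and then verify they are mutual inverses, extending the idempotent-splitting observation given in the paragraph just before the theorem. Throughout, let $b : V \times V \to W$ be nondegenerate Hermitian and let $A = \Adj(b)|_V$, which carries the involution $*$ induced by $b$ since $b$ is Hermitian.

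First I would go from idempotents to decompositions. Given pairwise orthogonal $*$-invariant idempotents $e_1,\dots,e_s\in A$ that sum to $1$, set $X_i = Ve_i$. The $X_i$ are non-zero $K$-subspaces and $V = X_1 \oplus \cdots \oplus X_s$ as a module decomposition because the $e_i$ are a complete orthogonal system of idempotents in $A \subseteq \End V$. For $i\neq j$, the orthogonality $e_j e_i = 0$ together with $*$-invariance of $e_i$ gives
\begin{align*}
    b(Ve_i, Ve_j) = b(V, Ve_j e_i^*) = b(V, Ve_j e_i) = b(V, 0) = 0,
\end{align*}
so the sum is a $\perp$-decomposition $b = (b|_{X_1}) \perp \cdots \perp (b|_{X_s})$. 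To see each constituent is $\perp$-indecomposable, I would argue contrapositively: any non-trivial $\perp$-decomposition of $b|_{X_i}$ would produce, via the reverse construction below, two orthogonal $*$-invariant idempotents whose sum is $e_i$, contradicting primitivity of $e_i$.

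Next I would go from decompositions to idempotents. Given a fully refined $\perp$-decomposition $V = X_1 \oplus \cdots \oplus X_s$, let $e_i \in \End V$ be the projection with image $X_i$ and kernel $\bigoplus_{j\neq i} X_j$. Then $\{e_i\}$ is a complete set of pairwise orthogonal idempotents in $\End V$ summing to $1$. To show $e_i \in A$ with $e_i^* = e_i$, fix $u = \sum x_j$ and $v = \sum y_j$ with $x_j,y_j \in X_j$; using the $\perp$-property,
\begin{align*}
    b(ue_i, v) = b(x_i, y_i) = b(u, ve_i),
\end{align*}
so $(e_i, e_i) \in \Adj(b)$, confirming $*$-invariance. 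Primitivity follows from $\perp$-indecomposability of $b|_{X_i}$: if $e_i = f + g$ with $f,g$ orthogonal $*$-invariant non-zero idempotents in $A$, then the forward direction applied to $b|_{X_i}$ yields a $\perp$-decomposition $X_i = Vf \oplus Vg$, contradicting $\perp$-indecomposability.

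Finally I would check that the two assignments are mutually inverse. Starting from a decomposition $\{X_i\}$, the constructed idempotents $e_i$ satisfy $Ve_i = X_i$ by definition, so passing back recovers the original decomposition. Starting from idempotents $\{e_i\}$, the projection onto $Ve_i$ along $\bigoplus_{j\neq i} Ve_j$ coincides with $e_i$ (since $e_j$ annihilates $Ve_k$ for $k \neq j$ and fixes $Ve_j$), recovering the original idempotents. The main subtlety worth emphasizing is the precise use of the Hermitian hypothesis: it is what ensures that $*$ exists on $\Adj(b)$ in the first place and that the projections $e_i$ one extracts from a $\perp$-decomposition are $*$-invariant rather than merely satisfying some looser adjointness relation; without this, orthogonality of the $X_i$ would not translate cleanly into orthogonality of the $e_i$ under $*$.
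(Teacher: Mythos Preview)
Your proof is correct, but note that the paper does not actually prove this theorem: it is quoted as \cite[Corollary 4.5]{Wilson:algo-cent} and stated without proof. The paragraph immediately preceding the statement sketches exactly the mechanics you carry out (an idempotent $e$ with $e^*=e$ splits $V=Ve\oplus V(1-e)$ with $b(Ve,V(1-e))=0$, and ``this process can also be reversed''), so your argument is a faithful expansion of what the authors indicate and defer to the cited source.
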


\begin{proof}[Proof of \thmref{thm:simple-adj-tensor}]
By \thmref{thm:*-simples}, there is a nondegenerate Hermitian $K$-form $d : V \times V \to K$, where $K$ is the center of $A$, such that $A = \Adj (d)|_V$ with associated involution.  Using \exref{ex:form-decomp}, we obtain a fully refined $\perp$-decomposition of $V$ into hyperbolic points and anisotropic points.  Using \thmref{thm:perp-idemp}, there is a set $\mathcal{E}=\{e_1,\dots,e_m\}\subseteq A$ of pairwise orthogonal $*$-invariant-primitive idempotents whose $1$-eigenspaces on $V$ are $1$- or $2$-dimensional over $K$ according to whether the associated $\perp$-factor is anisotropic or hyperbolic.  However, $d$ factors through $\otimes_A$ (as $A=\Adj (d)|_V$ as a $*$-ring), and so, $A\subseteq \Adj (\otimes_A)|_V\subseteq \Adj (d)|_V=A$.  Furthermore, the involutions also agree.  Applying \thmref{thm:perp-idemp} in the opposite direction to the nondegenerate Hermitian bimap $\otimes_A$, we find $\otimes_{A} = b_1 \perp \cdots \perp b_m$ where each $b_i = (\otimes_A)|_{Ve_i} = (\otimes_{e_i A e_i})|_{Ve_i}$ is a $\perp$-indecomposable nondegenerate Hermitian $K$-bimap.  Therefore, $\otimes_{A}$ is a $K$-form so long as $1=\dim_K b_i(Ve_i,Ve_i)=\dim_K (Ve_i\otimes_{e_iAe_i} (Ve_i)^*)$, for all $i$ in $\{1,\dots,m\}$.  Since $A=\Adj (d)|_V$, $e_i Ae_i=\Adj (d_i)|_{Ve_i}$, where $d_i=d|_{Ve_i}$.  By \lemref{lem:ext}, we see that $\dim_K (Ve_i\otimes_{e_i A e_i} (Ve_i)^*)=\dim_K (Ve_i\otimes_{\Adj (d_i)|_{Ve_i}}Ve_i)=1$.

Next, suppose that $\tau : A \cong \Adj (j)|_V$ is a $K$-linear $*$-ring isomorphism for an alternating nondegenerate $K$-form $j$ on $V$.  Observe that $A=\End_K V=\Adj (j)|_V$ as rings so that $\tau$ is a $K$-linear ring automorphism of $\End_K V$.  From the Skolem-Noether theorem \cite[IX.10-11]{Jacobson}, $\tau$ is an inner automorphism, and so, there is an endomorphism $x\in \End_K V$ such that for all $a\in A$, $a\tau=x^{-1} ax$.  As $\tau$ is $*$-preserving, it follows that $\Adj (d)=\{(a,a^*):a\in A\}^x=\Adj (j)$ and therefore, $\otimes_{A}=\otimes_{\Adj (\otimes_A)}$ is pseudo-isometric to $\otimes_{\Adj (j)}$.  The latter is $K$-pseudo-isometric to $j$.  In particular, $\otimes_A$ is an alternating nondegenerate $K$-form.
\end{proof}

\thmref{thm:simple-adj-tensor} allows us to recognize quotients of Heisenberg groups.  Recall from the end of Section \ref{sec:q-Hei} that our interest is to recognize nondegenerate $\mathbb{Z}/p$-bimaps that factor through an alternating nondegenerate $K$-form $j$.

\begin{coro}\label{coro:quotient-Adj-1}
Let $K$ be a finite field, $V$ a $K$-vector space, and $W\neq 0$ a $\mathbb{Z}/p$-vector space.  If $j : V \times V \to K$ is a nondegenerate alternating $K$-form and $\pi : K \to W$ is a $\mathbb{Z}/p$-linear epimorphism, then $j\pi$ is alternating and nondegenerate, $\Adj (j\pi)$ is simple and acts irreducibly on $V$, and $\otimes_{\Adj (j\pi)}$ is an alternating nondegenerate $k$-form where $k$ is a subfield of $K$ isomorphic to the center of $\Adj (j\pi)$.
\end{coro}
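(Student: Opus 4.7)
My plan is to prove the three claims sequentially, using the adjoint-ring/tensor-product machinery just developed. Part (i) is a short direct verification: alternating, because $(j\pi)(u,u)=j(u,u)\pi=0$; and nondegenerate, because $j(u,V)$ is a $K$-subspace of the $1$-dimensional $K$-module $K$, hence $0$ or $K$, and the latter is ruled out by $W\neq 0$ (which forces $\ker\pi\neq K$), so $j(u,V)=0$ and then $u=0$ by nondegeneracy of $j$.

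For (ii), the main step is to produce the canonical $*$-preserving embedding $\Adj(j)\hookrightarrow \Adj(j\pi)$: if $j(uf,v)=j(u,vg)$, then composing with $\pi$ gives $(j\pi)(uf,v)=(j\pi)(u,vg)$, and both involutions are the swap $(f,g)\leftrightarrow(g,f)$ coming from the alternating (hence Hermitian) structures. By \exref{ex:adj-j}, $\Adj(j)|_V=\End_K V$ acts irreducibly on $V$, so the larger ring $\Adj(j\pi)|_V$ does too; combined with faithfulness (from nondegeneracy of $j\pi$), Jacobson density plus finiteness identify $\Adj(j\pi)\cong \End_k V$, where $k=\End_{\Adj(j\pi)}(V)$ is the Schur division ring, which is a finite division ring and hence a field. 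Because $k$ centralizes everything in $\Adj(j\pi)|_V$, in particular $\End_K V$, it lies in the centralizer of $\End_K V$ inside $\End_{\mathbb{Z}/p}V$, which consists of the $K$-scalars; so $k$ is a subfield of $K$, and it equals the center of $\Adj(j\pi)\cong \End_k V$. In particular $\Adj(j\pi)$ is simple.

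For (iii), set $A:=\Adj(j\pi)|_V$ and apply the first assertion of \thmref{thm:simple-adj-tensor} to $A$ with the involution induced by the Hermitian $j\pi$: this immediately yields $V\otimes_A V^*\cong k$, so $\otimes_A$ is a nondegenerate $k$-form. It remains to upgrade ``$k$-form'' to ``alternating $k$-form,'' and my plan is to transport this property from $\Adj(j)$. The $*$-preserving inclusion $\Adj(j)\hookrightarrow \Adj(j\pi)$ induces a natural surjection $\varphi\colon V\otimes_{\Adj(j)}V^*\twoheadrightarrow V\otimes_A V^*$ with $\varphi(u\otimes v)=u\otimes v$ (enlarging the adjoint ring merely imposes more identifications). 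By \exref{ex:adj-j} together with the ``Moreover'' clause of \thmref{thm:simple-adj-tensor} applied to $\Adj(j)$ itself, $\otimes_{\Adj(j)}$ is an alternating nondegenerate $K$-form, so $u\otimes_{\Adj(j)}u=0$ for every $u\in V$; applying $\varphi$ then gives $u\otimes_A u=0$, as required.

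The main obstacle I anticipate is the identification of $\Adj(j\pi)$ in (ii): one must combine Jacobson density, the containment $\End_K V\subseteq \Adj(j\pi)|_V$, and a Schur-lemma argument to pin the Schur division ring down as an honest subfield of $K$ rather than some unrelated finite field. Once that identification is secured, (iii) follows formally from the universal property of the adjoint tensor product and the already-established machinery.
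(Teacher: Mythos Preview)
Your proposal is correct and follows essentially the same route as the paper: the paper also obtains nondegeneracy from $j(u,V)\in\{0,K\}$, establishes the $*$-preserving inclusion $\Adj(j)\subseteq\Adj(j\pi)$ to get irreducibility, identifies $\Adj(j\pi)|_V=\End_k V$ with $k$ the center contained in (the copy of) $K$, invokes \thmref{thm:simple-adj-tensor} for the nondegenerate $k$-form, and then deduces alternating by factoring $\otimes_{\Adj(j\pi)}$ through the alternating $\otimes_{\Adj(j)}$. The only cosmetic difference is that you reach $\End_k V$ via Jacobson density and the Schur commutant, whereas the paper argues ``finite primitive $\Rightarrow$ simple $\Rightarrow$ matrix ring over its center''; the two identifications of $k$ coincide.
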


We stress that \corref{coro:quotient-Adj-1} does not insist the $k$ is $K$.  For example, a $\mathbb {Z}/p$-linear epimorphism $\pi : K \to \mathbb {Z}/p$ will have $\Adj (j\pi)|_V \cong M_{2me} (\mathbb{Z}/p)$ where $e = [K:\mathbb{Z}/p]$, so it is not possible in general to assume $k=K$.

\begin{proof}
Suppose for some $0 \neq u \in V$, that for all $v \in V$ we have $j(u,v)\pi = 0$.  As $j$ is nondegenerate, there is an element $v \in V$ such that $j (u,v) =: s \neq 0$.  Now, for all $t \in K$, $t\pi = j (u,ts^{-1} v)\pi=0$, so $K\pi = 0$.  This is excluded by the assumptions on $\pi$.  Hence, $j\pi$ is nondegenerate.

Next, observe that $(f,f^*) \in \Adj (j)$ implies that for all $u,v \in V$, $j (uf,v) = j (u,vf^*)$, and so, also $j (uf,v) \pi=j (u,vf^*) \pi$, showing that $(f,f^*) \in \Adj (j\pi)$.  It follows that $\Adj (j)$ is contained in $\Adj (j\pi)$ as a $*$-subring.  As both $j$ and $j\pi$ are nondegenerate, $\Adj (j)|_V$ and $\Adj (j\pi)|_V$ are faithful representations on $V$ and $\Adj (j)|_V \subseteq \Adj (j\pi)|_V$ with the involution on $\Adj (j)|_V$ the restriction of the involution on $\Adj (j\pi)|_V$.  Because $j$ is a nondegenerate $K$-form, we have as rings $\End_K V=\Adj (j)|_V$ (cf. \exref{ex:adj-j}), and so, as rings
\begin{align*}
    \End_K V=\Adj (j)|_V \subseteq \Adj (j\pi)|_V \subseteq \End_{\mathbb{Z}/p} V.
\end{align*}
Because $V$ is a simple $\Adj (j)$-module, it is also a simple $\Adj (j\pi)$-module; in particular, as a ring $\Adj (j\pi)|_V$ is a simple subring of $\End_{\mathbb{Z}/p} V$ (i.e. $\Adj (j\pi)|_V$ is a finite primitive ring so it is simple).  Also, $\Adj (j\pi)$ contains a copy of $K$ (as scalar multiplication in $\End_K V$), the center $k$ of $\Adj (j\pi)$ is a subfield of this copy of $K$.

Every finite simple ring $R$ is isomorphic to the ring of endomorphisms of a finite-dimensional vector space over the center of $R$.  So $\Adj (j\pi) \cong \End_k U$ where $k$ is the center of $\Adj (j\pi)$ and $U$ is a finite-dimensional $k$-vector space.  As such, $U$ is an irreducible $\Adj (j\pi)$-module, but finite simple rings have one isomorphism type of simple module and so $U\cong V$ as $\Adj (j\pi)$-modules.  In particular, $\Adj (j\pi)\cong \End_k U\cong \End_k V$.  Since $\Adj (j\pi)|_{V}$ is a faithful representation of in $\End_k V$, it follows that $\Adj (j\pi)|_V=\End_k V$. The hypotheses of \thmref{thm:simple-adj-tensor} are now satisfied
by $\Adj (j\pi)|_V$, and so, $\otimes_{\Adj (j\pi)}$ is a nondegenerate $k$-form.

Finally, we must show that $\otimes_{\Adj (j\pi)}$ is alternating.  As
$\Adj (j)\subseteq \Adj (j\pi)$, $\otimes_{\Adj (j\pi)}$ factors
through $\otimes_{\Adj (j)}$.  By the final implication of
\thmref{thm:simple-adj-tensor}, $\otimes_{\Adj (j)}$ is alternating.
Therefore, $\otimes_{\Adj (j\pi)}$ is alternating as well.
\end{proof}

\begin{remark}\label{rem:tensor}
The usual technique for studying the alternating $\mathbb{Z}/p$-bimaps $b:V \times V \to W$ on $V=K^{2m}$ is to pull back to the $\mathbb{Z}/p$-exterior square $\wedge:V \times  V \to V\wedge_{\mathbb{Z}/p} V$.  However, $\dim_{\mathbb{Z}/p} (V\wedge V)\in \Theta(m^2\dim^2_{\mathbb{Z}/p} K)$.  In our context, $\dim_{\mathbb{Z}/p} W\leq \dim_{\mathbb{Z}/p} K$, and so, we have a very large gap between $\dim V\wedge_{\mathbb{Z}/p} V$ and $\dim_{\mathbb{Z}/p} W$.  Using $\otimes_{\Adj (b)}$ allows us to pull back (in a canonical way) to an alternating $\mathbb{Z}/p$-bimap $V \times V \to V\otimes_{\Adj (b)} V$, where $\dim_{\mathbb{Z}/p} V\otimes_{\Adj (b)}V \leq \dim_{\mathbb{Z}/p} K$.  
\end{remark}

%----
\subsection{Recognizing quotients of Heisenberg groups}
Interpreting \corref{coro:quotient-Adj-1} for generalized Heisenberg groups makes for a simple and computable test for when a group is isomorphic to a quotient of an odd order generalized Heisenberg group.

\begin{thm}\label{thm:rec-q-Hei}
Fix a group $G$ with $1=G^p < G'=Z(G) < G$, and a generalized Heisenberg group $H_{\ell}(K)$.  The following are equivalent.
\begin{enumerate}[(i)]
\item $G$ is an epimorphic image of $H_{\ell}(K)$.
\item $\Adj (\Bi (G))$ acts irreducibly on $G/Z(G)$ and is $*$-isomorphic to $\Adj (j)$ for a nondegenerate alternating $k$-form $j$ on $G/Z(G)$, for a subfield $k$ of $K$ isomorphic to the center of $\Adj (\Bi (G))$.
\end{enumerate}
\end{thm}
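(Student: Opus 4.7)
The plan is to exploit the equivalence from the end of Section \ref{sec:q-Hei} between epimorphisms $H_{\ell}(K)\twoheadrightarrow G$ (for $G$ nonabelian) and factorizations of the Baer bimap $b = \Bi(G)$ as $j\pi$, where $j$ is the standard nondegenerate alternating $K$-form on $G/Z(G)\cong K^{2\ell}$ and $\pi:K\to G'$ is a $\mathbb{Z}/p$-linear epimorphism.  The job of detecting such a factorization from $b$ alone is handled by \corref{coro:quotient-Adj-1} and \thmref{thm:simple-adj-tensor} through the adjoint ring and the universal tensor $\otimes_{\Adj(b)}$.

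For (i)$\Rightarrow$(ii), I would start from an epimorphism $H_{\ell}(K)\twoheadrightarrow G$; since $G$ is nonabelian, \lemref{lem:Camina}(iii) forces the kernel into $H_{\ell}(K)' = Z(H_{\ell}(K))$, so the reduction at the end of Section \ref{sec:q-Hei} identifies $b$, up to $\mathbb{Z}/p$-pseudo-isometry, with some $j\pi$ as above.  \corref{coro:quotient-Adj-1} then delivers the irreducible action of $\Adj(b)$ on $V = G/Z(G)$ and produces an alternating nondegenerate $k$-form $\otimes_{\Adj(b)}$ on $V$ with $k\subseteq K$ isomorphic to the center of $\Adj(b)$.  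Applying the last assertion of \thmref{thm:simple-adj-tensor} to $A = \Adj(b)|_V = \End_k V$ identifies $\Adj(b)$ with $\Adj(\otimes_{\Adj(b)})$ as $*$-rings, so taking $j := \otimes_{\Adj(b)}$ establishes (ii).

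For (ii)$\Rightarrow$(i), set $A = \Adj(b)$ and $W = G'$.  Hypothesis (ii) together with \thmref{thm:simple-adj-tensor} turns $\otimes_A:V\times V\to V\otimes_A V\cong k$ into an alternating nondegenerate $k$-form, and since $b$ is $A$-mid-linear by construction, it factors as $b = (\otimes_A)\hat b$ for a $\mathbb{Z}/p$-linear surjection $\hat b:k\to W$.  The remaining task is to promote the $k$-form to a $K$-form.  I would choose a $K$-vector space structure on $V$ extending the $k$-structure that embeds $K$ into $A$ as a $*$-invariant subring (so $V\cong K^{2\ell}$ with $2\ell = \dim_k V/[K:k]$), let $j_K$ be the standard alternating nondegenerate $K$-form on $V$ for this structure, and observe $\End_K V = \Adj(j_K)|_V\subseteq A$.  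Then $\otimes_A$ is $\End_K V$-mid-linear, so it factors through $V\otimes_{\End_K V}V\cong K$ by \thmref{thm:simple-adj-tensor} applied to $\End_K V$, yielding $\otimes_A = j_K\tau$ for some $k$-linear $\tau:K\to k$.  Composing, $b = j_K\pi$ with $\pi = \tau\hat b:K\to W$, and the construction recalled at the end of Section \ref{sec:q-Hei} lifts this to an epimorphism $H_{\ell}(K) = \Grp(j_K)\twoheadrightarrow\Grp(b)\cong G$.

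The hard part will be the promotion step in (ii)$\Rightarrow$(i): one needs to embed $K$ into $A$ as a $*$-invariant subring so that $K$-scalar multiplication on $V$ is compatible with the involution on $A$ and so that the inclusion $\End_K V\subseteq A$ is $*$-preserving.  This rests on the identification $A\cong\End_k V$ from (ii) and the implicit dimension matching $\dim_{\mathbb{Z}/p}V = 2\ell[K:\mathbb{Z}/p]$ forced by (i); once both are in place, the factorization of $\otimes_A$ through $j_K$ drops out of the universal property of $\otimes_{\End_K V}$.
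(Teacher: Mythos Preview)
Your (i)$\Rightarrow$(ii) is the paper's argument: reduce to $b=j\pi$ via Section~\ref{sec:q-Hei}, apply \corref{coro:quotient-Adj-1} to get irreducibility and the alternating $k$-form $\otimes_{\Adj(b)}$, and conclude since $\Adj(b)=\Adj(\otimes_{\Adj(b)})$ as $*$-rings.

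For (ii)$\Rightarrow$(i) the paper does strictly less than you attempt.  It sets $j':=\otimes_A$, notes via \thmref{thm:simple-adj-tensor} that $j'$ is an alternating nondegenerate $k$-form, takes $H:=\Grp(j')$ (a generalized Heisenberg group over $k$, not $K$), factors $b$ through $j'$ by the tensor's universal property, and uses \propref{prop:Baer-correspondence} to get $G\cong H/N$.  The paper's last line is simply ``$G$ is an epimorphic image of a generalized Heisenberg group''; there is no promotion to $H_\ell(K)$.

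Your promotion step is therefore extra, and as stated it cannot be completed from (ii) alone.  Condition (ii) only says the center $k$ of $\Adj(b)$ is a subfield of $K$; it does not force $\dim_k V=2\ell[K:k]$, and without that $K$ need not even embed in $A=\End_k V$.  Concretely, $G=H_2(\mathbb{Z}/p)$ satisfies (ii) relative to $H_1(\GF(p^4))$ (here $k=\mathbb{Z}/p$), yet every nonabelian quotient of $H_1(\GF(p^4))$ has $\dim_{\mathbb{Z}/p}(G/Z(G))=8\neq 4$.  The remark following the theorem supplies precisely this missing dimension hypothesis.  With it in hand, the promotion is in fact much easier than your $*$-invariant embedding machinery: once $G$ is a quotient of $H_m(k)$ with $m=\ell[K:k]$, compose with the epimorphism $H_\ell(K)\to H_m(k)$ obtained from any $\mathbb{Z}/p$-linear surjection $K\to k$ (e.g.\ the trace), using \remref{rem:one-alt-form} to identify the resulting alternating $k$-form with the standard one.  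So drop the promotion and stop where the paper does; the passage to the specific $H_\ell(K)$ belongs to the remark, not the proof.
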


\begin{proof}
Let $\phi : H_{\ell}(K) \to G$ be an epimorphism.  As discussed at the close of Section \ref{sec:q-Hei}, if we set $V = G/Z(G)$, $W = G'$ and $b = \Bi (G)$, then there is an alternating nondegenerate $K$-form $j : V \times V \to K$ induced from $H_{\ell}(K)$, and a $\mathbb{Z}/p$-linear epimorphism $\ct{\varphi} : K \to W$, such that $b = j\ct{\varphi}$.  Thus, $\Adj (b) = \Adj (j\pi) = \Adj (\otimes_{\Adj (j\pi)})$ (with equality as $*$-rings).  By
\corref{coro:quotient-Adj-1}, $\otimes_{\Adj (j\pi)}$ is a nondegenerate alternating $k$-form (possibly different from $j$) where $k$ is a subfield of $K$ and isomorphic to the
center of $\Adj (b)$. Furthermore, \corref{coro:quotient-Adj-1} also shows $\Adj (b)$ acts irreducibly on $V$.
Since the $*$-isomorphism type and representation of $\Adj (b)$ is a $\mathbb{Z}/p$-pseudo-isometry invariant, it follows that the $*$-isomorphism type and representation of $\Adj (\Bi (G))$ is an isomorphism invariant of $G$.  This proves that (i) implies (ii).

Next, we show (ii) implies (i).  We assume that $A = \Adj (\Bi (G))$ acts (faithfully) irreducibly on $V = G/Z(G)$, so that $A|_V = \End_k V$ for a field $k$ isomorphic to the center of $A$.  Furthermore, $A$ is $*$-isomorphic to $\Adj (j)$ for a nondegenerate alternating $F$-form $j$ on $V$, for some subfield $F$ of $K$.  The involution on $\Adj (j)$ (and therefore on $A$) preserves the center (cf. \exref{ex:adj-j}), and so, the isomorphism $A \to \Adj (j)$ induces an isomorphism $k \cong F$.  Therefore, we treat $j$ as an alternating $k$-form, and $\Adj (j)|_V = \End_k V = A|_V$.  We now apply \thmref{thm:simple-adj-tensor}, and we find that $j' := \otimes_{A}$ is an alternating nondegenerate $k$-form.  This implies that $H := \Grp (j')$ is a generalized Heisenberg group (cf. \exref{ex:j-Hei}).  By the universal properties of tensors, $\Bi (G) = j'\pi$ for a (unique) additive map
$\pi : V \otimes_{A} V^* \to G'$.  Letting $N=\ker \pi$, we have $\Grp (j'\pi) \cong H/N$.  Finally, by \propref{prop:Baer-correspondence}, we know that $G \cong \Grp (\Bi (G)) = \Grp (j'\pi) \cong H/N$.  Therefore, $G$ is an epimorphic image of a generalized Heisenberg group.
\end{proof}

\begin{remark}
We can also view \thmref{thm:rec-q-Hei} as stating that $G$, as in \thmref{thm:rec-q-Hei}, is an epimorphic image of $H_{\ell} (K)$ if and only if $\otimes_{\Adj (\Bi (G))}$ is an alternating nondegenerate $k$-form for subfield $k$ of $K$ such that $\dim_k G/Z(G)=2\ell\cdot [K:k]$.  This follows by translating condition (ii) using \corref{coro:quotient-Adj-1} and considering the associated requirements on dimensions.
\end{remark}

\subsection{Indigenous quotients}
An implication of \thmref{thm:rec-q-Hei} is that every nonabelian quotient $H/N$ of a generalized Heisenberg group implicitly determines a smallest generalized Heisenberg group
of which it is a quotient.  Specifically, if $K$ is the center of $\Adj (\Bi (H/N))$ and $(H/N)/(H/N)'$ is $2m$ dimensional over $K$, then we write:
\begin{align}\label{eq:floor}
    \lfloor H/N \rfloor & = H_m(K).
\end{align}
In the language of our introduction, we say $H/N$ is \emph{indigenous} to $H$ if $H\cong \lfloor H/N\rfloor$.  As discussed in Section \ref{sec:q-Hei}, there is a natural $\mathbb{Z}/p$-isometry $\phi$ from $\Bi (H/N)$ to $\Bi (H) \pi$, for an appropriate epimorphism $\pi$.  Thus, $\Adj (\Bi (H)) \subseteq \Adj (\Bi (H) \pi) = \Adj (\Bi (H/N))^{\phi}$.  So we have proved:
\begin{prop}\label{prop:indig}
$H/N$ is indigenous to $H$ if and only if 
	$$\Adj (\Bi (H)) = \Adj (\Bi (H) \pi) = \Adj (\Bi (H/N))^{\phi}$$ 
(where equality includes as rings with involution).
\end{prop}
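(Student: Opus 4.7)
The rightmost equality $\Adj(\Bi(H)\pi) = \Adj(\Bi(H/N))^{\phi}$ is immediate: a $\mathbb{Z}/p$-isometry transports the adjoint-ring-with-involution from one bimap onto the other, and this is already recorded in the paragraph preceding the statement. Similarly, the inclusion $\Adj(\Bi(H)) \subseteq \Adj(\Bi(H)\pi)$ is automatic (if $(f,f^*)$ satisfies $j(uf,v) = j(u,vf^*)$, then applying $\pi$ to both sides preserves the identity, and the involutions restrict compatibly, exactly as in the proof of \corref{coro:quotient-Adj-1}). So the content of the proposition reduces to the equivalence that $H/N$ is indigenous to $H$ if and only if $\Adj(\Bi(H)) = \Adj(\Bi(H)\pi)$, where both rings are faithfully represented on the common space $V := H/Z(H)$.

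The plan is to convert each side of this equivalence into a statement about a single subfield. Since $H$ is a generalized Heisenberg group with field $K$ and $\dim_K V = 2m$, \exref{ex:adj-j} gives $\Adj(\Bi(H))|_V = \End_K V$. On the other side, \corref{coro:quotient-Adj-1} applied to $\Bi(H)\pi$ yields $\Adj(\Bi(H)\pi)|_V = \End_k V$, where $k \subseteq K$ is the subfield equal to the center of $\Adj(\Bi(H)\pi)$. Via $\phi$ this $k$ also coincides with the center of $\Adj(\Bi(H/N))$, which is precisely the defining field of $\lfloor H/N \rfloor$ in \eqref{eq:floor}. The inclusion of adjoint rings now reads $\End_K V \subseteq \End_k V$, an equality of rings exactly when $k = K$; and in that case the two involutions also agree, because the involution on $\Adj(\Bi(H))|_V$ is the restriction of the one on $\Adj(\Bi(H)\pi)|_V$.

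Finally, I would translate $k = K$ into indigeneity. Writing $\lfloor H/N \rfloor = H_{m'}(k)$ with $2m' = \dim_k V$, and applying \thmref{thm:rec-Hei} (which recovers both the defining field and the integer $m$ from the multiplicative structure of a generalized Heisenberg group), we see that $H \cong \lfloor H/N \rfloor$ is equivalent to $k = K$; whereupon $m = m'$ is forced by the dimension identity $\dim_k V = \dim_K V \cdot [K:k]$. Combining the two chains of equivalences yields the proposition.

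The main obstacle will be the bookkeeping rather than any genuine conceptual difficulty: one has to systematically identify $H/Z(H)$ with $(H/N)/(H/N)'$ via $\phi$ so that all three adjoint rings are compared as subrings of the same $\End V \times (\End V)^{\mathrm{op}}$ with the same involutions, and one has to confirm that \corref{coro:quotient-Adj-1} and \exref{ex:adj-j} are applied within this common identification. Once that is pinned down, the argument is essentially a repackaging of \corref{coro:quotient-Adj-1} in terms of field sizes, with \thmref{thm:rec-Hei} converting the field-comparison into the required isomorphism of generalized Heisenberg groups.
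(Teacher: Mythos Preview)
Your proposal is correct and follows essentially the same approach as the paper. The paper's own proof is extremely terse---it is just the paragraph preceding the proposition, which records the inclusion $\Adj(\Bi(H)) \subseteq \Adj(\Bi(H)\pi) = \Adj(\Bi(H/N))^{\phi}$ and then says ``So we have proved:''---leaving the reader to unpack, exactly as you do, that equality of these $*$-rings amounts to $\End_K V = \End_k V$ for the subfield $k\subseteq K$ from \corref{coro:quotient-Adj-1}, hence to $k=K$, hence (via \thmref{thm:rec-Hei} and the definition of $\lfloor H/N\rfloor$) to indigeneity.
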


There are many indigenous quotients, but to guarantee that all quotients of a certain size are indigenous to a Heisenberg group, we use some elementary number theory.

\begin{lemma}\label{lem:good-pair}
For every integer $n \geq 12$, there is an integer $d=d_n$ such that
\begin{enumerate}[(i)]
\item $2d + 2 \le n \le 3d$,
\item for all $i$ such that $n - 2d \leq i < d$, $i \ndivides d$, and
\item $d - \frac {5}{12} n \in O(1)$ (as functions of $n$).
\end{enumerate}
\end{lemma}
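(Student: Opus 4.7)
The plan is to exhibit $d=d_n$ explicitly as an odd integer within $\pm 2$ of $5n/12$ and verify the three conditions directly. My first step is a reformulation of (ii). The interval $[n-2d,d-1]$ cannot contain $d$ itself, so (ii) can only fail if some proper divisor of $d$ lies there. The divisor $1$ is ruled out because (i) forces $n-2d\geq 2$. Every other proper divisor is at most $d/p$, where $p$ is the smallest prime factor of $d$. So if $d$ is odd, hence $p\geq 3$, all proper divisors that are $\geq 2$ are bounded by $d/3$, and (ii) is implied by $d/3<n-2d$, equivalently $d<3n/7$. Alternatively, if $d$ is prime its only divisors are $1$ and $d$, and (i) already excludes both from $[n-2d,d-1]$, so (ii) holds automatically.

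For the construction, I would let $d_0=\lfloor 5n/12\rfloor$ and set
\[
  d_n=\begin{cases} d_0 & \text{if $d_0$ is odd,}\\ d_0-1 & \text{if $d_0$ is even and $d_0-1\geq \lceil n/3\rceil$,}\\ d_0+1 & \text{otherwise.}\end{cases}
\]
In each case $d_n$ is odd with $|d_n-5n/12|<2$, yielding (iii).  For the first two cases, $d_n\leq d_0\leq 5n/12<3n/7$ (the strict inequality $5n/12<3n/7$ follows from $35<36$), so (ii) follows from the reformulation. For (i), the inequalities $n/3\leq 5n/12\leq (n-2)/2$ hold for $n\geq 12$ (with equality at $n=12$ on the right), so $d_0\in[\lceil n/3\rceil,\lfloor(n-2)/2\rfloor]$; the analogous bounds on $d_n$ in the first two cases are then immediate.

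The ``otherwise'' case requires $d_0$ even and $d_0-1<\lceil n/3\rceil$. A short calculation gives $d_0-1\geq 5n/12-2\geq n/3$ whenever $n\geq 24$, and enumerating $12\leq n\leq 23$ reveals $n=16$ as the unique exception (where $d_0=6$ and $\lceil 16/3\rceil=6$). There I would take $d_n=d_0+1=7$, which lies in $[6,7]=[\lceil 16/3\rceil,(16-2)/2]$ (so (i) holds) and is prime, so (ii) follows by the prime escape from Step 1. The hard part of this proof is precisely this small-$n$ edge case: no single closed-form recipe (say ``the odd integer nearest $5n/12$'') can force both $d<3n/7$ and $d\geq n/3$ simultaneously for all $n\geq 12$, because when $n<84$ the interval $(5n/12,3n/7)$ is too short to contain an integer, and in the isolated case $n=16$ no odd $d$ below $5n/12$ is large enough for (i). Invoking primality of $d_n=7$ dispenses with this single exception.
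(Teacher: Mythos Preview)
Your proof is correct and takes a genuinely different route from the paper's. Both arguments rest on the same reformulation of (ii): if the largest proper divisor $d/p$ of $d$ lies below $n-2d$, then (ii) follows automatically. The paper arranges this by writing $n=12q+r$ and choosing $d=5q+e$ with $e\in\{1,2,3,4\}$ so that $d$ is coprime to $30$; then $p\geq 7$ and one checks $d/6<q\leq n-2d$, but this only works cleanly for $q\geq 8$, so the paper must treat $q\in\{5,6,7\}$ by an ad hoc primality observation and then dispose of $12\leq n\leq 59$ via an explicit table. Your approach is more economical: by insisting only that $d$ be odd you get $p\geq 3$, and the needed inequality $d/3<n-2d$ becomes simply $d<3n/7$, which is automatic for any $d\leq 5n/12$ since $5/12<3/7$. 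This collapses the exceptional range to the single value $n=16$, handled by the prime $d=7$. What the paper's choice buys is a slightly larger gap between $d/p$ and $n-2d$ (which is irrelevant for the lemma as stated); what your choice buys is a cleaner, nearly case-free argument.

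One small point of presentation: your sentence ``the inequalities $n/3\leq 5n/12\leq (n-2)/2$ hold for $n\geq 12$ \dots\ so $d_0\in[\lceil n/3\rceil,\lfloor(n-2)/2\rfloor]$'' deserves one more line, since passing from $n/3\leq 5n/12$ to $\lceil n/3\rceil\leq\lfloor 5n/12\rfloor$ is not formal. The clean way is to note $d_0>5n/12-1\geq n/3$ for $n\geq 12$ (the second inequality being $n/12\geq 1$), and since $d_0$ is an integer strictly exceeding $n/3$, it is at least $\lceil n/3\rceil$.
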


Note, \lemref{lem:good-pair}(ii) is satisfied whenever $d$ is prime.

\begin{proof}
Suppose first that $n \ge 60$, write $n = 12q + r$ for an integer $0 \leq r < 12$.  Note that $q \ge 5$.  Set $d = 5q + e$ where $e$ is an integer chosen between $1$ and $4$ so that $d$ is congruent modulo $30$ to one of $1$, $7$, $11$, $17$, $23$, or $29$.  Immediately (iii) follows.  Observe that $2d + 2 = 10 q + 2e + 2 \le 10 q + 10 < 12 q \le n$ since $q \ge 5$ and so $10 \le 2q$.  Also, $3d = 15q + 3e > 15q > 12q + r$ since $3q \ge 12 > r$.  Observe that $n - 2d = 12q + r - 2(5q + e) = 2q + r - 2e \ge 2q - 2e$.  Notice that $2e \le 8$, so if $q \ge 8$, then $n - 2d \ge q$.  Let $p$ be the smallest prime dividing $d$, and note that $p > 6$.  We have $d/p < d/6 = 5/6
q + e/6 \le 5/6 q + 4/6 < 5/6 q + 1/6 q = q$.  It follows that for all $i$ if $n-2d \leq i < d$, then $d/p < i$, and $d < ip$.  On the other hand, if $i$ divides $d$, then $d/i \ge p$, and so, $d \ge ip$. This is a contradiction, so $i \ndivides d$.  If $q = 5$, then $d = 29$, if $q = 6$, then $d = 31$, and if $q = 7$, then $d = 37$.  In each of these cases, $d$ is prime, and since $n - 2d \ge 2$, $(n,d)$ satisfies (ii).

For $12 \le n \le 15$, take $d = 5$.  For $16 \le n \le 21$, we take $d = 7$.  For $22 \le n \le 23$, take $d = 8$.  For $24 \le n \le 33$, take $d = 11$.  For $34 \le n \le 39$, take $d = 13$.  For $40 \le n \le 57$, take $d = 19$.  For $n = 58$ or $n = 59$, take $d = 23$.  One can check by hand that each of these pairs $(n,d)$ satisfy (i) and (ii).
\end{proof}

First, we show \lemref{lem:good-pair} (i) and (ii) guarantees that indigenous quotients exist.  Later, we will use part (iii) to show that indigenous quotients are plentiful.

\begin{prop}\label{prop:stable-q}
Let $(n,d)$ be a pair as in \lemref{lem:good-pair} parts (i) and (ii).  If $H$ is a Heisenberg group of order $p^{3d}$ and $N\leq H'$ with $[H:N]=p^n$, then $H\cong \lfloor H/N\rfloor$.
\end{prop}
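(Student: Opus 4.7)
My plan is to apply \thmref{thm:rec-q-Hei} together with a dimension count to force the smallest generalized Heisenberg group having $H/N$ as a quotient to be $H$ itself.

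First, I verify that $H/N$ satisfies the hypotheses of \thmref{thm:rec-q-Hei}: it has exponent $p$ (inherited from $H^p = 1$), it is nonabelian (since $N$ is properly contained in $H'$, so $(H/N)' = H'/N$ is nontrivial), and $(H/N)' = Z(H/N)$. For the last equality, $H'/N \leq Z(H/N)$ is automatic since $H' = Z(H)$; for the reverse inclusion, \lemref{lem:Camina}(i) applied to $H$ gives $[g,H] = H'$ for each $g \in H - H'$, and $H' \not\leq N$ then prevents $gN$ from being central. Since $H/N$ is an epimorphic image of $H_1(K)$, \thmref{thm:rec-q-Hei}(ii) together with the definition \eqref{eq:floor} gives $\lfloor H/N \rfloor = H_m(k)$, where $k \subseteq K$ is the subfield isomorphic to the center of $\Adj(\Bi(H/N))$ and $2m = \dim_k V$ for $V = (H/N)/Z(H/N)$.

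Next comes the numerical step. The surjection $H \twoheadrightarrow H/N$ induces an isomorphism $H/H' \cong (H/N)/(H/N)'$, so $\dim_{\mathbb{F}_p} V = 2d$. Setting $e = [k:\mathbb{F}_p]$ yields $2me = 2d$, and hence $|\lfloor H/N \rfloor| = |H_m(k)| = p^{e(2m+1)} = p^{2d + e}$. Since $H/N$ is a quotient of $\lfloor H/N \rfloor$, we must have $|\lfloor H/N \rfloor| \geq |H/N| = p^n$, which forces $e \geq n - 2d = w$. Combined with $e \mid d$ (as $k$ is a subfield of $K$), we see that $e$ is a divisor of $d$ lying in the interval $[w,d]$.

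Finally, \lemref{lem:good-pair}(ii) asserts that no integer $i$ satisfying $w \leq i < d$ divides $d$, so $e = d$. Hence $k = K$, $m = 1$, and $\lfloor H/N \rfloor = H_1(K) \cong H$. The main obstacle is just keeping the numerology straight: condition (ii) of \lemref{lem:good-pair} is engineered precisely to rule out the proper subfields $k \subsetneq K$ that could produce a smaller Heisenberg group having $H/N$ as a quotient, so once the framework of \thmref{thm:rec-q-Hei} is unpacked, the conclusion is immediate.
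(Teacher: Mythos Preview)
Your proof is correct and follows essentially the same approach as the paper's own argument: identify the center of $\Adj(\Bi(H/N))$ with a subfield $k\subseteq K$ of degree $e$, use the dimension count on $V=(H/N)/(H/N)'$ to get $me=d$ and the size bound $|\lfloor H/N\rfloor|\geq |H/N|$ to get $e\geq n-2d$, and then invoke \lemref{lem:good-pair}(ii) to force $e=d$. The paper phrases the same computation via \corref{coro:quotient-Adj-1} and the equality $[H':N]=[H_2':M]$ rather than via \thmref{thm:rec-q-Hei} and $|\lfloor H/N\rfloor|$, but the numerology and the use of \lemref{lem:good-pair}(ii) are identical.
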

\begin{proof}
Let $b=\Bi (H/N):V \times V \to W$.  By \corref{coro:quotient-Adj-1} and \eqref{eq:adj-j}, the ring $\Adj (\Bi (H/N))$ is isomorphic as a ring to $M_{2m} (F)$ for a subfield $F$ of $K$ and where $K^2 \cong V \cong F^{2m}$.  Furthermore, $H_2= \lfloor H/N \rfloor$ is a generalized Heisenberg group over $F$ of degree $m$; hence, define $f$ by $|H_2'| = |F| = p^f$.  Let $H_2/M \cong H/N$ (and such an $M$ exists as $H_2 = \lfloor H/N \rfloor$).  It follows that
\begin{align}
    p^{n-2d} & = [H':N] = [H_2':M] = p^{n-2mf}.
\end{align}
Thus, $d = mf$, and furthermore, $n - 2d \leq f \leq d$ since $[H_2':M] \leq p^f$.  By the assumptions that $(n,d)$ satisfies \lemref{lem:good-pair} (ii) and $f \divides d$, it follows that $f = d$.  Thus, $F = K$ and $m = 1$.  So $H_2 \cong H$.
\end{proof}

%-------
\section{Proof of main theorems}\label{sec:main}

In this section we prove Theorems \ref{thm:main} and \ref{algo:main}.

%----------
\subsection{Lifting isomorphisms}
We begin with an observation which is likely well-known.

\begin{thm}\label{thm:aut-Hei}
If $H$ is a generalized Heisenberg group of degree $m$ over $K$ of characteristic $p$, then $\Aut H  = \Psi\Isom(\Bi (H))\ltimes \hom_{\mathbb{Z}/p}(K^{2m},K))$ and
\begin{align*}
    \Psi\Isom(\Bi (H)) & = \Gal(K)\ltimes (K^{\times}\ltimes \Sp(2m,K)).
\end{align*}
(Note that $\hom_{\mathbb{Z}/p}(K^{2m},K)$ corresponds to the inner automorphisms of $H$.)
\end{thm}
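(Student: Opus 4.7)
The first equality is immediate from Proposition \ref{prop:Baer-correspondence}: since $K$ has characteristic $p$, a direct computation from \eqref{eq:gen-Hei} yields $H^p = 1$, and combined with $H' = Z(H)$ from Example \ref{ex:j-Hei} the hypotheses of that proposition are met. The isomorphisms $H/Z(H) \cong K^{2m}$ and $H' \cong K$ of $\mathbb{Z}/p$-vector spaces then give $\Aut H \cong \Psi\Isom(\Bi(H)) \ltimes \hom_{\mathbb{Z}/p}(K^{2m}, K)$. So the task reduces to identifying $\Psi\Isom(\Bi(H))$.

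By Example \ref{ex:j-Hei} and Remark \ref{rem:one-alt-form}, $\Bi(H)$ is $K$-pseudo-isometric to the canonical alternating $K$-form $j\colon K^{2m}\times K^{2m} \to K$ of \eqref{eq:alt}, so I may replace $\Bi(H)$ by $j$. First I would isolate the subgroup of $K$-pseudo-isometries: a pair $(f;\hat{f})$ with both components $K$-linear forces $\hat{f}$ to be multiplication by some $c \in K^{\times}$, and the condition $j(uf,vf) = c\cdot j(u,v)$ identifies these pairs with the group $\mathrm{GSp}(2m, K)$ of $K$-similitudes of $j$. The multiplier map $(f;\hat f) \mapsto c$ has kernel $\Sp(2m, K)$, and is split by $c \mapsto \bigl(\begin{bsmallmatrix} cI_m & 0 \\ 0 & I_m\end{bsmallmatrix}; c\bigr)$, producing the decomposition $\mathrm{GSp}(2m,K) = K^{\times} \ltimes \Sp(2m, K)$.

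To enlarge to the full $\mathbb{Z}/p$-pseudo-isometry group, I would invoke the centroid. For any $(f;\hat{f}) \in \Psi\Isom(j)$, conjugation $(r,r') \mapsto (f^{-1}rf,\hat{f}^{-1}r'\hat{f})$ preserves $\Cent(j) \cong K$ and so induces a ring automorphism of $K$, namely some $\sigma \in \Gal(K)$. Writing $m_c$ for multiplication by $c\in K$ on $V = K^{2m}$, this gives $f^{-1} m_c f = m_{\sigma(c)}$, which forces $f$ to be $\sigma$-semi-linear. Conversely, applying $\sigma$ entry-wise to $K^{2m}$ and as a field automorphism to $K$ yields an explicit pseudo-isometry $(\tilde{\sigma}; \sigma)$, because the matrix of $j$ in \eqref{eq:alt} lies in $\mathbb{Z}/p$ and is thus $\Gal(K)$-fixed. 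Then $(\tilde{\sigma}^{-1}f; \sigma^{-1}\hat{f})$ commutes with the centroid on both components and is hence $K$-linear, giving a $K$-pseudo-isometry. This exhibits $\Psi\Isom(j) = \Gal(K) \ltimes (K^{\times} \ltimes \Sp(2m, K))$, where the $\Gal(K)$-action on $\mathrm{GSp}(2m,K)$ is entry-wise Galois conjugation (well-defined because $j$'s matrix is Galois-fixed).

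I expect the main obstacle is the inference of $\sigma$-semi-linearity of $f$ from its action on the centroid; this is a three-pile-shuffle of the kind used throughout Section \ref{sec:centroid}. The remainder amounts to careful bookkeeping of the two nested semidirect products and the verification that the explicit Galois and scalar lifts actually satisfy the pseudo-isometry condition.
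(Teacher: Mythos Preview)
Your proposal is correct and follows essentially the same approach as the paper's proof: both reduce to $\Psi\Isom(j)$ via \propref{prop:Baer-correspondence}, obtain the Galois action by letting pseudo-isometries act by conjugation on a canonical copy of $K$ (you use $\Cent(j)$, the paper uses the center of $\Adj(j)$---equivalent here), deduce semilinearity, and then exhibit explicit splittings for $\Gal(K)$ and $K^{\times}$. The only cosmetic differences are the choice of splitting matrix for $K^{\times}$ and the order in which the two semidirect factors are peeled off.
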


\begin{proof}
The structure of $\Aut H$ is explained by \propref{prop:Baer-correspondence}; so we concentrate on $\Psi\Isom(\Bi (H))=\Psi\Isom(j)$ where $j:K^{2m} \times K^{2m} \to K$ a nondegenerate alternating $K$-form.

First, for all $(\phi;\ct{\phi}) \in \Psi\Isom(j)$, and all $(f,g) \in
\Adj (j)$,  $(f,g)^{(\phi,\ct{\phi})} := (f^{\phi},
g^{\phi})\in \Adj (j)$. Therefore, $\Psi\Isom(j)$ acts on the center
$K$ of $\Adj (j)$ as a group of ring automorphisms.  The action on the center induces a group homomorphism $\Psi\Isom(j) \to \Gal(K)$
denoted $s \mapsto s^{\phi}$. In particular, if $s \in K$ and $u \in
K^{2m}$, then
\begin{align*}
    (su)\phi & =u(s I_{2m}\cdot \phi)=u(\phi\cdot s^{\phi} I_{2m})=s^{\phi} (u\phi).
\end{align*}
In particular, elements of $\Psi\Isom(j)|_V$ are $K$-semilinear. Let $u,v\in V$ be such that
$j(u,v)=s\neq 0$.  For each $t\in K$, $t=j(u,ts^{-1} v)$ and so
\begin{align}
t\ct{\phi} & =j(u\phi, (ts^{-1} v)\phi)= j(u\phi, t^{\phi} (s^{-1}v\phi))= t^{\phi} j(u\phi, (s^{-1} v)\phi)
	= t^{\phi} (1\ct{\phi}).
\end{align}
In particular, $t\ct{\phi} = t^{\phi} \lambda$, where $\lambda_{\phi} := 1\ct{\phi}\in K^{\times}$ proving $\ct{\phi}\in \Gal(K)\ltimes K^{\times}\leq \GL_{\mathbb{Z}/p}(K)$.
Consequently,  $(\phi; \ct{\phi})\mapsto \ct{\phi}$ is a group homomorphism $\Psi\Isom(j) \to \Gal(K)\ltimes K^{\times}$ with kernel $\Isom(j)$.

Now, for each $\tau\in \Gal(K)$, $(v\mapsto v^{\tau}; s\mapsto s^\tau)\in
\Psi\Isom(j)$; hence, $\Gal(K)\hookrightarrow \Psi\Isom(j)$ and its
image splits with the $K$-linear pseudo-isometries $\Psi\Isom_K(j)$.
The group $\Psi\Isom_K(j)$ admits $\Isom(j)=\Sp(2m,K)$ as well as
$K^{\times}$ since
\begin{align*}
    \left(v\mapsto v \begin{bmatrix} I_m & 0 \\ 0 & sI_m\end{bmatrix}; \alpha\mapsto s\alpha\right)&\in \Psi\Isom_K(j)
        & (\forall s\in K^{\times}).
\end{align*}
The image of $K^{\times}$ splits with $\Isom(j)$ in $\Psi\Isom_K(j)$.  This
completes the proof.
\end{proof}

We now turn to the question of lifting isomorphisms of quotients of $H$ to automorphisms of $H$.  Throughout this discussion, $K/(\mathbb{Z}/p)$ is a finite field extension and $b: V \times V \to W$ is a $\mathbb{Z}/p$-bimap.

Suppose that $\pi: W \to X\neq 0$ and $\tau: W \to Y\neq 0$ are $\mathbb{Z}/p$-linear epimorphisms.  Set $c=b\pi$ and $d=b\tau$.  If $(\phi; \ct{\phi}): c \to d$ is a $\mathbb{Z}/p$-pseudo-isometry, then $\Adj (c)^{\varphi}=\Adj (d)$ and so there is an isomorphism
$\ct{\Phi}:V\underset{\Adj (c)}{\otimes} V \to V\underset{\Adj (d)}{\otimes} V$ where
\begin{align}\label{eq:Phi}
    (u \otimes v) \ct{\Phi} & = u \phi \otimes v \phi & (\forall u, v\in V).
\end{align}
So $(\phi;\ct{\Phi})$ is a $\mathbb{Z}/p$-pseudo-isometry from $\otimes_{\Adj (c)}$ to $\otimes_{\Adj (d)}$.  Also, as $c$ factors through $\otimes_{\Adj (c)}$ there is an epimorphism $\hat{c}:V\otimes_{\Adj (c)} V \to X$  such that $c=\otimes_{\Adj (c)}\hat{c}$ and an isomorphism $\bar{c}:(V\otimes_{\Adj (c)} V)/(\ker \hat{c})\cong X$.  The same construction is applied to $d$.  Immediately, $\hat{c}\ct{\phi}=\ct{\Phi}\hat{d}$ and so $(\ker\hat{c})\ct{\Phi} = \ker\hat{d}$.  Hence,  $\ct{\Phi}$ induces an isomorphism $\gamma$ from $(V\otimes_{\Adj (c)} V)/(\ker \hat{c})$ to $(V\otimes_{\Adj (d)} V)/(\ker \hat{d})$ such that $\ct{\phi}=\bar{c}^{-1} \gamma \bar{d}$.   So in that sense, $\ct{\Phi}$ induces $\ct{\phi}$, and so, we say that $(\phi;\ct{\Phi})$ induces $(\phi;\ct{\phi})$. Finally, if $A := \Adj (c)=\Adj (d)$, then $(\phi;\ct{\Phi})$ is a $\mathbb{Z}/p$-pseudo-isometry of
$\otimes_{A}$ that induces the $\mathbb{Z}/p$-pseudo-isometry $(\phi;\ct{\phi})$.

\begin{thm}\label{thm:lift-iso}
Let $H$ be a generalized odd order Heisenberg group, and let $M$ and
$N$ be proper subgroups of $H'$. If $H/M$ and $H/N$ are indigenous quotients of $H$,
then every isomorphism $\varphi: H/M \to H/N$ is induced by an
automorphism $\Phi$ of $H$ with $M\Phi=N$.
\end{thm}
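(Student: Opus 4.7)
The plan is to translate the statement into the language of bimaps via Baer's correspondence, lift the induced pseudo-isometry through the adjoint-tensor construction described just before the theorem, and then descend to an automorphism of $H$. Set $G_1 = H/M$, $G_2 = H/N$, and $b_i = \Bi(G_i)$. By \propref{prop:Baer-correspondence}, the isomorphism $\varphi$ induces a $\mathbb{Z}/p$-pseudo-isometry $(\phi;\ct{\phi}):b_1 \to b_2$. Following the end of \secref{sec:q-Hei} and making the canonical identifications $G_i/Z(G_i) = V := H/H'$ and $G_i' = K/\ker\pi_i$ (where $\ker\pi_1 = M$ and $\ker\pi_2 = N$), each $b_i$ factors as $b_i = j\pi_i$ with $j = \Bi(H):V \times V \to K$ the alternating nondegenerate $K$-form.

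The indigenous hypothesis enters through \propref{prop:indig}, which gives $\Adj(b_1) = \Adj(b_2) = \Adj(j) =: A$ as the same $*$-subring of $\End V \times (\End V)^{op}$; in particular, $A^\phi = A$. I would then apply the construction of \eqref{eq:Phi} to define $\ct{\Phi}:V \otimes_A V \to V \otimes_A V$ by $(u \otimes v)\ct{\Phi} = u\phi \otimes v\phi$; this is a $\mathbb{Z}/p$-pseudo-isometry of $\otimes_A$, and writing $\hat{b_i}:V \otimes_A V \to G_i'$ for the factorization of $b_i$ through $\otimes_A$, it satisfies $\hat{b_1}\ct{\phi} = \ct{\Phi}\hat{b_2}$.

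Next I invoke \thmref{thm:simple-adj-tensor}: since $A = \Adj(j)$ for the alternating nondegenerate $K$-form $j$, there is a $K$-linear isomorphism $\hat{j}:V \otimes_A V \to K$ with $j = \otimes_A \hat{j}$. Transporting $\ct{\Phi}$ through $\hat{j}$ produces $\tilde{\Phi} := \hat{j}^{-1}\ct{\Phi}\hat{j} \in \GL_{\mathbb{Z}/p}(K)$ so that $(\phi;\tilde{\Phi})$ is a pseudo-isometry of $j = \Bi(H)$. By \propref{prop:Baer-correspondence}, this lifts to an automorphism $\Phi \in \Aut H$ with $\Phi|_{H/H'} = \phi$ and $\Phi|_{H'} = \tilde{\Phi}$.

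It remains to check that $\Phi$ does the desired job. From $b_i = j\pi_i = \otimes_A \hat{j}\pi_i$ and $b_i = \otimes_A \hat{b_i}$, we read off $\hat{b_i} = \hat{j}\pi_i$; substituting into $\hat{b_1}\ct{\phi} = \ct{\Phi}\hat{b_2}$ yields $\pi_1\ct{\phi} = \tilde{\Phi}\pi_2$. Since $\ct{\phi}$ is an isomorphism, $M = \ker\pi_1 = \ker(\pi_1\ct{\phi}) = \ker(\tilde{\Phi}\pi_2)$, and since $\tilde{\Phi}$ is invertible, the last kernel is the $\tilde{\Phi}$-preimage of $N$; hence $M\Phi = N$. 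The induced map $H/M \to H/N$ of $\Phi$ has the same Baer-translated data $(\phi;\ct{\phi})$ as $\varphi$ and so equals $\varphi$ by \propref{prop:Baer-correspondence}. The main hazard is a formal compatibility issue: we need $\Adj(b_1) = \Adj(b_2)$ as literal subrings of $\End V \times (\End V)^{op}$ (not merely abstractly isomorphic) so that $\ct{\Phi}$ makes sense as an endomorphism of a single space $V \otimes_A V$, and \propref{prop:indig} is precisely what supplies this once the canonical identifications above are fixed.
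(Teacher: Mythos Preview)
Your argument is correct and follows essentially the same route as the paper: translate $\varphi$ to a pseudo-isometry $(\phi;\ct{\phi})$, use indigeneity (\propref{prop:indig}) to identify $\Adj(b_1)=\Adj(b_2)=\Adj(j)=A$, lift via \eqref{eq:Phi} to a pseudo-isometry $(\phi;\ct{\Phi})$ of $\otimes_A$, and then transport to $\Psi\Isom(j)\hookrightarrow\Aut H$. The only cosmetic difference is that the paper phrases the last transport as conjugating by an abstract pseudo-isometry $(\tau;\ct{\tau})$ from $j$ to $\otimes_A$, whereas you use the specific $K$-linear isomorphism $\hat{j}$ supplied by \thmref{thm:simple-adj-tensor}; these are the same move, and your explicit kernel chase for $M\Phi=N$ is a welcome addition. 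One small caveat: in your final sentence, having the same pseudo-isometry data $(\phi;\ct{\phi})$ determines the induced isomorphism only up to the $\hom_{\mathbb{Z}/p}(V,G_1')$ factor in \propref{prop:Baer-correspondence}, so to get $\Phi$ inducing $\varphi$ on the nose you may need to adjust $\Phi$ by an element of $\hom_{\mathbb{Z}/p}(V,H')$ lifting that discrepancy; the paper glosses over this in the same way.
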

\begin{proof}
Choose $H=\Grp(j)$ for $j:K^{2m} \times K^{2m} \to K$ as in \eqref{eq:alt}, set $V=H/H'$, and fix the transversal $\ell:V \to K^{2m} \times 0\subseteq H$.  Treat $M,N < H'=0 \times  K$ as $\mathbb{Z}/p$-subspaces of $K$.  Let $\pi_M:K \to K/M$ and $\pi_N:K \to K/N$ be the natural projections.  There are also natural isomorphisms
	$$(H/M)/(H/M)'\overset{\tau_M}{\rightarrow} V \overset{\tau_N}{\leftarrow} (H/N)/(H/N)'.$$
We see that $(\tau_M; 1_{K/M})$ is an isometry from $\Bi (H/M)$ to $c=\Bi (H)\pi_M$ and $(\tau_N;1_{K/N})$ is an isometry from $\Bi (H/M)$ to $d=\Bi (H)\pi_N$.  Now, fix an isomorphism $\varphi: H/M \to H/N$ of groups. Set $\phi=\tau_N^{-1} (\varphi|_{(H/M)/(H/M)'})\tau_M$, which is a $\mathbb{Z}/p$-linear automorphism of $H/H'$.  Also, set $\ct{\phi}=\varphi|_{K/M}:K/M \to H/N$.  Thus, $(\phi;\ct{\phi})$ is a $\mathbb{Z}/p$-pseudo-isometry from $c$ to $d$.  Furthermore, $(\phi;\hat{\phi})$ induces an isomorphism $\Grp(\phi;\hat{\phi}):\Grp(\Bi (H/M)) \to \Grp(\Bi (H/N))$.  At this point we have constructed the outer square in the commutative diagram of \figref{fig:lifting} where the vertical isomorphisms are given by the Baer correspondence with respect to the fixed transversal $\ell$; cf. \propref{prop:Baer-correspondence}. 

Since we assume $H/M$ and $H/N$ are indigenous to $H$, $A=\Adj (c)=\Adj (\Bi (H))=\Adj (d)$. Therefore, \eqref{eq:Phi} determines a $\mathbb{Z}/p$-pseudo-isometry $(\phi;\ct{\Phi})$ of $\otimes_A$ that induces $(\phi;\ct{\phi})$.
By \corref{coro:quotient-Adj-1}, $\otimes_{A}$ is an alternating nondegenerate $K$-form, and this leads to a $\mathbb{Z}/p$-pseudo-isometry $(\tau;\ct{\tau})$ from $j$ to $\otimes_A$ (above).  We obtain $(\gamma;\ct{\gamma})=(\phi;\ct{\Phi})^{(\tau;\ct{\tau})}\in \Psi\Isom_{\mathbb{Z}/p}(j)$ and $(\gamma;\ct{\gamma})$ induces $(\phi;\ct{\phi})$.  Finally, $\Psi\Isom_{\mathbb{Z}/p}(j)$ embeds in $\Aut H$, and so, there is an automorphism $\Phi \in \Aut H$ such that $\Phi$ induces $(\gamma;\ct{\gamma})$, and so, it induces $\varphi$; in particular, $M\Phi=N$.  This describes the inner square in the diagram \figref{fig:lifting}.
\end{proof}
\begin{figure}
\begin{equation*}
\xymatrix{
H/M \ar[rrr]^{\varphi}\ar[ddd]^{\cong} & & & H/N\ar[ddd]^{\cong}\\
 & H\ar[d]^{\cong} \ar[r]^{\Phi}\ar[ul] & H\ar[ur]\ar[d]^{\cong} \\
 & \Grp(\otimes_A)\ar[r]_{\Grp(\phi;\ct{\Phi})}\ar[dl] & \Grp(\otimes_A)\ar[dr]\\
\Grp(c) \ar[rrr]_{\Grp(\phi;\ct{\phi})} & & & \Grp(d).
}
\end{equation*}
\caption{The diagram illustrating how to pass the isomorphism $\phi$ to the isomorphism $\Grp(\phi;\ct{\phi})$.  Then lift to the automorphism $\Grp(\phi;\ct{\Phi})$, and finally to the automorphism $\Phi$.}\label{fig:lifting}
\end{figure}
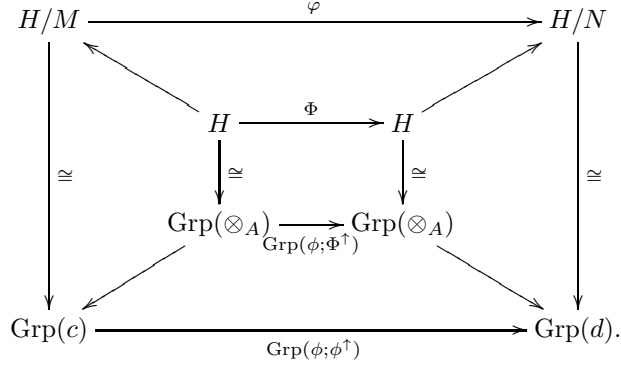
\begin{remark}
W. M. Kantor suggests that an alternative proof for \thmref{thm:lift-iso} might be obtained by considering the Schur multipliers.
\end{remark}

\thmref{thm:aut-Hei} implies the converse of \thmref{thm:lift-iso}  and so we have proved:

\begin{coro}\label{coro:orbits}
If $H$ is a generalized odd order Heisenberg group and $M,N < H'$ are such that $H/M$ and $H/N$ are indigenous to $H$, then $H/M\cong H/N$ if and only if there is an automorphism $\Phi$ of $H$ with $M\Phi=N$.  Thus, the isomorphism classes of the indigenous quotients of $H$ are in bijection with the $(\Aut H)$-orbits on the subgroups of $H'$.
\end{coro}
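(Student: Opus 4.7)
The plan is to package the two preceding results into a statement about $(\Aut H)$-orbits on subgroups of $H'$. The forward direction---that $H/M \cong H/N$ forces the existence of $\Phi \in \Aut H$ with $M\Phi = N$---is exactly the content of \thmref{thm:lift-iso}, so no new work is needed there.

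For the converse, I would start with an automorphism $\Phi \in \Aut H$ satisfying $M\Phi = N$. Since $H' = Z(H)$ is characteristic (and \thmref{thm:aut-Hei} makes this structurally explicit by exhibiting $\Aut H$ as acting on the pair $(H/H',H')$), $\Phi$ restricts to a bijection of $H'$, and since $M\Phi = N$, the assignment $xM \mapsto (x\Phi)N$ is well defined, a homomorphism because $\Phi$ is, and bijective because $\Phi$ is. This yields $H/M \cong H/N$, establishing the biconditional.

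For the ``Thus'' clause I would simply combine the two directions. Every indigenous quotient of $H$ has the form $H/N$ for some proper subgroup $N < H'$, so the map sending an $(\Aut H)$-orbit $\mathcal{O}$ of a subgroup of $H'$ to the isomorphism class of $H/N$, for any $N \in \mathcal{O}$, is well defined by the converse, injective by the forward direction, and surjective by the definition of indigenous quotient.

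There is no real obstacle: the entire content of the corollary is carried by \thmref{thm:lift-iso}, and what remains is a short piece of bookkeeping that uses only the fact that automorphisms of $H$ preserve $H'$, a fact already visible in \thmref{thm:aut-Hei}. Consequently, I would write the proof in only a few lines, essentially as above.
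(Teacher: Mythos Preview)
Your proposal is correct and matches the paper's approach: the paper's entire proof is the single line ``\thmref{thm:aut-Hei} implies the converse of \thmref{thm:lift-iso} and so we have proved,'' which is exactly the two-step argument you give (forward direction from \thmref{thm:lift-iso}, converse from the elementary fact that an automorphism sending $M$ to $N$ induces $H/M\cong H/N$). Your invocation of \thmref{thm:aut-Hei} to justify that $\Aut H$ acts on $H'$ is a bit more than is needed---$H'=Z(H)$ is characteristic in any group---but this does no harm and mirrors the paper's own reference.
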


%----
\subsection{Proof of \thmref{thm:main}}
Let $(n,d)$ be a pair as in \lemref{lem:good-pair}, set $s=n-2d$, and fix $K$ to be a finite field of order $p^d$. Take $H$ to be a Heisenberg group over $K$, so $j=\Bi (H)$ is an alternating nondegenerate $K$-form on $K^{2}$.  Following \thmref{thm:aut-Hei}, $\Aut H$ maps onto $\Psi\Isom(j)$ and $\Aut H$ acts on the subgroups of $H'$ as $\Gal(K) \ltimes K^{\times}$ acts on the $\mathbb{Z}/p$-subspaces of $K$.  The number of subgroups of index $p^s$ in $H'$ is estimated by counting the number of $\mathbb{Z}/p$-subspaces of codimension $s$ in $K$ which is
\begin{align}
    \begin{bmatrix} d \\ s \end{bmatrix}_p  & = \prod_{i=1}^s \frac{p^d-p^{i-1}}{p^s-p^{i-1}} \geq p^{s(d-s)}.
\end{align}
The number of $(\Aut H)$-orbits on the subgroups $H'$ of index $p^s$ is bounded below by  $p^{s(d-s)}/(|\Gal(K)|(|K|-1))$.   By \propref{prop:stable-q}, quotients of size $p^{2d+s}=p^n$ are indigenous to $H$.  Hence, in light of \corref{coro:orbits}, the number of isomorphism classes of quotients of $H$ of order $p^{n}$ is at least:
\begin{align*}
    \frac{p^{s(d-s)} }{ d (p^{d}-1)}\geq p^{-s^2 +(s-1)d-\log_p d}.
\end{align*}
When we optimize $f(s,d)=-s^2+(s-1)d-\log_p d$ over $d$ subject to the constraint that $n=2d+s$, we find the maximum occurs for $d\in 5n/12+O(1)$ and $s\in n/6+O(1)$ and the number of orbits is at least $p^{n^2/24 + O(n)}$.  By \lemref{lem:good-pair}(iii) the pair $(n,d)$ attains this asymptotic maximum.  Therefore, the Heisenberg group of order $p^{3d}=p^{5n/4+O(1)}$ over a field of order $p^d$ has $p^{n^2/24+O(n)}$ pairwise nonisomorphic quotients of order $p^n$.\hfill $\Box$

%--------
\subsection{Proof of \thmref{algo:main}}
As we mentioned in the introduction, our original algorithm applied only to permutation groups, but using a result of L. Ronyai, we can extend these to more general settings.

\begin{proof}[Proof of \thmref{algo:main} (i)]
Using the standard polynomial-time algorithms (cataloged in \cite[pp. 4--6]{Seress:book} for permutation groups, in \cite{Luks:mat} for matrix groups, and in \cite{HoltEO} for polycyclic groups with a black-box multiplication), compute $G^p$, $Z(G)$, and $G'$, and then, certify that
$1 = G^p < G' = Z(G) < G$; otherwise, $G$ cannot be a nonabelian quotient of a
generalized Heisenberg group.  

Next, use the algorithms of
\cite[Section 5]{Wilson:algo-cent} to compute structure constants
for $b = \Bi (G)$, a basis for $\Adj (b)$, and recognize whether or
not $\Adj (b)$ is a simple ring acting irreducibly on $V = G/Z(G)$
and $*$-isomorphic to the adjoint ring of an alternating
nondegenerate form.  By \thmref{thm:rec-q-Hei}, at this point we have determined if $G$ is an epimorphic image of a generalized Heisenberg group.  

If $G$ is an epimorphic image of a generalized Heisenberg group then the algorithm creates
$\otimes_{\Adj (b)}$ along with the canonical projection $\pi : V
\otimes_{\Adj (b)} V  \to G'$.  Set $H = H_m (K)$ where $K$ is the center of $\Adj (b)$ and
$2m = \dim_K V$.  Finally, the algorithm computes a
standard hyperbolic basis for $\otimes_{\Adj (b)}$ and a change
of basis determines a pseudo-isometry $(\varphi;\ct{\varphi})$ from
$j = \Bi (H)$, $2m = \dim_K V$, to $\otimes_{\Adj (b)}$.  It follows that
$(\varphi;\ct{\varphi})$ induces an isomorphism $\Phi : H \to
\Grp (\otimes_{\Adj (b)})$ and $\pi$ determines an epimorphism
$\Gamma: \Grp (\otimes_{\Adj (b)}) \to G$ so that $\Phi\Gamma : H \to G$ is the desired epimorphism.  

The algorithms cited have both a deterministic version that runs in time polynomial in $\log |G|+p$, and non-deterministic version of the Las Vegas type with polynomial run time in $\log |G|$. In particular the algorithms are honest deterministic polynomial time algorithms for both permutation groups and for matrix groups in bounded characteristic.  This gives us the stated complexity of \thmref{algo:main}.  
\end{proof}

\begin{lemma}[Ronyai]\label{lem:Ronyai-trick}
Let $K/k$ be a finite extension of a finite field $k$.  There is a deterministic algorithm that given $k$-subspaces $U$ and $V$ of $K$, determines a $c\in K^{\times}$ such that $Uc = V$ or proves that no such $c$ exists.  The algorithm uses $O(\dim^6 K)$ operations in $k$.
\end{lemma}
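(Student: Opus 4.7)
The plan is to reduce the problem to a single linear-algebra computation over $k$. Let $e=\dim_k K$. First compare $\dim_k U$ and $\dim_k V$: if these differ, no $c\in K^\times$ can exist, since multiplication by a nonzero element of $K$ is a $k$-linear bijection of $K$. Otherwise set $d:=\dim_k U=\dim_k V$ (the case $d=0$ is trivial, returning any nonzero $c$).

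The crucial observation is that
\[
W \;=\; \{\,c\in K : Uc\subseteq V\,\}
\]
is a $k$-subspace of $K$, and that every nonzero $c\in W$ automatically satisfies the \emph{equality} $Uc=V$. Indeed multiplication by $c$ is injective on $K$, so $\dim_k Uc=d=\dim_k V$, and $Uc\subseteq V$ then forces equality. Hence solving the problem amounts to exhibiting a nonzero element of $W$, or certifying that $W=\{0\}$.

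To compute $W$, fix a $k$-basis $u_1,\dots,u_d$ of $U$ and represent the $k$-linear map
\[
\Phi : K \longrightarrow (K/V)^d, \qquad \Phi(c) \;=\; \bigl(u_1 c+V,\ \dots,\ u_d c+V\bigr),
\]
as a matrix over $k$: using structure constants for the $k$-algebra $K$, each product $u_i b_j$ with $b_j$ running over a $k$-basis of $K$ costs $O(e^2)$ operations in $k$, so the entire matrix is assembled in $O(de^3)$ operations; then $W=\ker\Phi$ is extracted by Gaussian elimination in $O(de^3)$ further operations, comfortably within the claimed $O(e^6)$ bound. The algorithm returns any nonzero vector in a basis of $W$, or reports failure.

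The main subtlety lies in the linearization step. The literal constraint $Uc=V$ is not $k$-linear in $c$: its solution set is either empty or a single coset of the multiplicative subgroup $E^\times$, where $E$ is the subfield of $K$ stabilising $U$, and there is no obvious polynomial-time way to search such an orbit directly. Relaxing the constraint to the inclusion $Uc\subseteq V$ turns the problem into a kernel computation while, thanks to the dimension count inside a field, losing no valid solutions.
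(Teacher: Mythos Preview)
Your proof is correct and follows essentially the same approach as the paper: both reduce the question to a $k$-linear system encoding $Uc\subseteq V$ and solve by Gaussian elimination, with the dimension check guaranteeing that any nonzero solution gives equality. Your kernel formulation $\Phi:K\to (K/V)^d$ is a slightly cleaner packaging than the paper's use of auxiliary coefficients $\alpha_{ij}$, and in fact yields a sharper operation count, but the underlying idea is identical.
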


\begin{proof}
First the algorithm decides if $\dim_k U = \dim_k V$, and if not, then it reports that $U$ and $V$ cannot be in the same $K^{\times}$-orbit.  Otherwise, the algorithm has $k$-bases $\{u_1,\dots,u_s\}$ and $\{v_1,\dots,v_s\}$ for $U$ and $V$ respectively.   If there exists a field element $c\in K$ such that $Uc=V$, then for each integer $1 \leq i \leq s$, there are field elements $\alpha_{i1}, \dots, \alpha_{ij}\in k$, such that
\begin{align}\label{eq:solutionset}
	u_i c & = v_1 \alpha_{i1} + \cdots + v_s \alpha_{is}.
\end{align}
Observe that these equations are $k$-linear in the variables $c$ and $\alpha_{i1},\dots,\alpha_{is}$.  To solve the system, we first fix a $k$-basis for $K$.  We then write $u_1, \dots, u_s$ and $v_s, \dots, v_s$ in this basis, and we write $c$ as linear combination in the basis for $K$ with unknown coefficient in $k$.  We then solve the equations determined by \eqref{eq:solutionset}.  This can be done with $O((\dim^2_k K)^3)$ operations in $k$.
\end{proof}

\begin{proof}[Proof of \thmref{algo:main}(ii)]
Using \thmref{algo:main} (i), we determine if the groups are indigenous quotients of a common Heisenberg group $H = H_m (K)$ for a finite field $K$ of size $p^d$.  This allows us to treat the input groups as quotients $H/M$ and $H/N$.  Furthermore, we determine if $[H:M] = [H:N]$, and if not, then the groups are nonisomorphic.

By \corref{coro:orbits}, the quotients $H/M$ and $H/N$ are isomorphic if and only if $N \in M^{\Aut H}$.  Because $\Aut H/C_{\Aut H} (H') \cong \Gal(K) \ltimes K^{\times}$, we fix a generator $\sigma$ for $\Gal(K)$.  Then, for each integer $1 \leq i\leq \dim_{\mathbb{Z}/p} K$, we use the algorithm of \lemref{lem:Ronyai-trick} to determine if there exists a field element $c\in K$, satisfying $(M\sigma^i)c=N$ (treating $M$ and $L$ as $\mathbb{Z}/p$-subspaces of $K$).  If this fails for each $i$ then $H/L$ is not isomorphic to $H/M$.  Otherwise, use the solution $(\sigma^i,c)\in \Gal(K)\ltimes K^{\times}$ to construct an automorphism $\Phi$ of $H$ where $M\Phi=N$ and so $\Phi$ induces an isomorphism $\phi=\Phi|_{H/M}:H/M\to H/N$.
\end{proof}

\begin{remark}\label{rem:nil-q-algo}
Our original proof used the observation that the size of $M^{\Aut H}$ is a divisor of $d(p^d-1)$.  The $(\Aut H)$-orbit of $M$ can be constructed from a basis for $M$ and $N$ can be tested for inclusion in $M^{\Aut H}$ by linear algebra at a cost of $O(d^3)$ for each of the $d(p^d-1)$ tests.  Hence, the total work is at worst $d^{4} p^d \in O(|H|^{1/(m+1)}\log^c |H|)$ for a constant $c$.  That was enough to obtain a polynomial bound on the algorithm's running time when the groups were specified by permutations.  (That uses the observation that nonabelian quotients of Heisenberg groups have permutation representations of degree at least $p^{2d}$.)

Our method still depends on exhausting over the elements in $\Gal(K)$, but this is dramatic decrease in the work required to list all of $\Gal(K)\ltimes K^{\times}$ (our original approach).  Both are substantial improvements over the traditional methods which would list all of $\Aut H$ in this context.  To see this we give a small survey of the standard methods some of which date back to work of Higman \cite[p. 10--12]{Higman:chic}.  

Higman defined a characteristic central series $\Phi^{(i)}$ for groups, now replaced by the lower exponent-$p$-central series. If $G$ and $J$ are $p$-groups and $G/\Phi^{(c)}(G)\cong J/\Phi^{(c)}(J)$, then there is a universal covering group $F$ mapping onto $G_{c+1} := G/\Phi^{(c+1)}(G)$ and $J_{c+1} := J/\Phi^{(c+1)}(J)$.   Thus, $G_{c+1}$ and $J_{c+1}$ are isomorphic if and only if their kernels in $F$ are in the same $(\Aut F)$-orbit.  Algorithms of this sort are collectively called \emph{nilpotent quotient algorithms} and have had many practical advances; for a survey see \cite{OBrien:iso}.  Yet, for $p$-groups of nilpotence class $2$ and order $N=p^n$, the universal covering groups $F$ in use can have order $p^{n+\binom{n}{2}}=N^{\log_{c} N+O(1)}$, $c$ depending on $p$, and the size of the $(\Aut F)$-orbits can reach $N^{\log_{c'} N+O(1)}$, $c'$ depending on $p$.  Indeed, for  quotients of order $N=p^n$ of a Heisenberg group of order $p^{5n/4+O(1)}$, the size of the orbits required by the general nilpotent quotient algorithms is:
\begin{align*}
    \frac{[\Aut F:C_{\Aut F}(F/F')|}{[\Aut H:C_{\Aut H}(H/H')]}
        & \approx \frac{|\GL(5n/6,p)|}{\frac{5n}{12}\cdot p^{5n/12} |\Sp(2,p^{5n/12})|}
         \in p^{\Theta(n^2)}=N^{\log_{d} N+\Theta(1)},
\end{align*}
where $d$ depends only on $p$.
The aspect of \thmref{algo:main} that permits a polynomial-time algorithm is summarized in \remref{rem:tensor} which shows we can use a much smaller covering group with much smaller orbits.  Furthermore, as Ronyai astutely observed, the action of the relevant groups on these orbits is much simpler and so enables even better algorithms than we had thought.
\end{remark}

%--------
\section{Quotients of Heisenberg groups are indistinguishable} \label{sec:invariants}

In this section, we run through a list of isomorphism invariants for finite $p$-groups of nilpotence class $2$ and determine what to
expect of these isomorphism invariants in the family of quotients of
Heisenberg groups.  The isomorphism invariants that we select are
independent in the sense that two groups with equal isomorphism
invariants of one type are not forced to have equal isomorphism
invariants of a different type.  Hence, in combination these
isomorphism invariants would seem to have a chance to distinguish
two generic $p$-groups of class $2$.

%----
\subsection{Consequences of the Camina property}
In this section, we derive some isomorphism invariants for quotients of generalized Heisenberg groups by observing these groups are special instances of Camina groups.

Recall that a group $G$ is a \emph{Camina} group if for every $g\in G-G'$, $[g,G]=G'$.    We saw in \lemref{lem:Camina} that generalized Heisenberg groups are Camina groups.  This condition transfers to all quotients by proper subgroups of $G'$.  Hence, nonabelian quotients of generalized Heisenberg groups are Camina groups.  Camina groups have received recent attention, some interesting results include \cite{Dark:on}, \cite{Mac:some}, and \cite{Mann:on}.  We use the Camina property to show that  the complex character tables of quotients of a Heisenberg group are determined solely by their order.

First, we briefly overview of representation theory and
character theory for non-experts.  Let $V$ be a finite dimensional complex vector space.   A homomorphism $\rho : G \to \GL(V)$ is an \emph{irreducible representation} if, for all $v\in V-0$, $V = \langle v(g\rho) : g \in G \rangle$. The \emph{character} $\chi_{\rho} : \{ g^G : g \in G \} \to \mathbb{C}$ afforded by $\rho$ assigns to $g \in G$ the trace of $\rho(g)$ (i.e. for each $g\in G$, $\chi_{\rho} (g^G)$ is the sum of the eigenvalues of $g\rho$, with multiplicity). The \emph{character table}, $\Irr (G)$, of $G$ is the set of characters of all irreducible representations of $G$. Finally, for groups $G$ and $H$, an \emph{isomorphism of character tables} $\Irr (G) \to \Irr (H)$ is a pair $\phi : G \to H$ and $\hat{\phi} : \Irr (H) \to \Irr (G)$ of bijections such that
\begin{align}
(\chi\hat{\phi}) (g) & = \chi (g\phi) & (\forall \chi \in \Irr (H), \forall g \in G).
\end{align}
Isomorphic groups have isomorphic character tables.  On the other hand, there are groups with isomorphic character tables that are not isomorphic.  Nevertheless,
there are incredibly deep properties of groups that can be inferred from character tables,
but that expansive subject is not our objective; for details consider \cite{Isaacs:book}.

\begin{thm}[\cite{Lewis:Camina}]\label{thm:Camina}
If $G$ and $J$ are finite Camina $p$-groups of nilpotence class $2$, then $G$ and $J$ have isomorphic character tables if and only if $[G:G'] = [J:J']$ and $|G'| = |J'|$.
\end{thm}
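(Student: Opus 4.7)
The plan is to handle the easy direction by reading off $|G'|$ and $[G:G']$ from any character table, and then to prove the hard direction by showing that for a Camina $p$-group of class $2$ the character table has a completely rigid shape determined by these two numbers alone.

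For the forward direction, both $|G|=\sum_\chi \chi(1)^2$ and $|Z(G)|$ (the number of conjugacy classes of size $1$, equivalently the number of columns where $|\chi(g)|=\chi(1)$ for every $\chi$) can be read off any character table. Since \lemref{lem:Camina} gives $Z(G)=G'$ for a Camina group of class $2$, both $|G'|$ and $[G:G']$ are preserved by any character-table isomorphism.

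For the reverse direction, I would begin with a structural lemma that $G/G'$ and $G'$ are both elementary abelian. The Camina condition says $g$ is conjugate to $gz$ for every $z\in G'$ and every $g\notin G'$; taking $p$-th powers in a class-$2$ group yields $g^p=(gz)^p=g^p z^p$, so $z^p=1$ and $G'$ has exponent $p$. The identity $[g^p,h]=[g,h]^p$ (valid in class $2$) then shows $g^p\in Z(G)=G'$, forcing $G/G'$ to have exponent $p$ as well. Consequently the isomorphism types of $G'$ and $G/G'$ depend only on their orders. Next, for any non-linear irreducible $\chi$ with central character $\mu_\chi\in\widehat{G'}\setminus\{1\}$, the Camina property gives $\chi(gz)=\chi(g)$ for $g\notin G'$ while centrality gives $\chi(gz)=\mu_\chi(z)\chi(g)$, so $\chi$ vanishes off $G'$. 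Orthogonality $|G|=\sum_g|\chi(g)|^2$, summed only over $G'$ where $|\chi(z)|=\chi(1)$, then yields $\chi(1)^2=[G:G']$. A Frobenius-reciprocity calculation on $\mathrm{Ind}_{G'}^G\mu$ for non-trivial $\mu\in\widehat{G'}$, combined with the dimension formula $[G:G']=\sum_{\chi:\mu_\chi=\mu}\chi(1)^2$, forces the induced representation to be $\chi(1)$ copies of a single irreducible, establishing a bijection $\chi\leftrightarrow\mu_\chi$ between the $|G'|-1$ non-linear irreducibles and $\widehat{G'}\setminus\{1\}$.

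At this point the character table assembles into two visible blocks: a linear block reproducing the character table of the elementary abelian $p$-group $G/G'$, and a non-linear block in which the character indexed by $\mu\in\widehat{G'}\setminus\{1\}$ takes value $[G:G']^{1/2}\mu(z)$ on a central element $z$ and $0$ on every non-central class. Both blocks depend only on $[G:G']$ and $|G'|$, so any compatible choice of bases identifying $G/G'\cong J/J'$ and $G'\cong J'$ (together with the induced identifications on their dual groups) assembles the character-table isomorphism.

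The main obstacle is the structural lemma that $G/G'$ is elementary abelian. Without it, the linear block would \emph{a priori} depend on the finer abelian-invariant structure of $G/G'$, and there would be no reason for character tables of groups with matching $[G:G']$ and $|G'|$ to agree. Once elementary-abelianness is in hand, the remainder is a direct character-theoretic computation based on the Camina condition.
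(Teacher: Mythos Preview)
The paper does not prove this theorem; it cites \cite{Lewis:Camina}. Your approach is the standard one and is essentially correct, but there is a genuine gap in your proof that $G'$ has exponent $p$.

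You write that since $g$ is conjugate to $gz$, ``taking $p$-th powers \ldots\ yields $g^p=(gz)^p=g^p z^p$, so $z^p=1$.'' Conjugacy of $g$ and $gz$ gives only that $g^p$ and $(gz)^p=g^p z^p$ are \emph{conjugate}, not equal. If $g^p\notin G'$, the Camina condition tells you the class of $g^p$ is $g^p G'$, and $g^p z^p\in g^p G'$ is vacuous. Your argument becomes valid only once you know $g^p\in Z(G)=G'$, which is precisely the next claim you want to prove; so as written the two steps are circular. The fix is to reverse the order: first show $G/G'$ has exponent $p$ directly. If some $g$ has $g^p\notin G'$, apply the Camina condition to $g^p$ to get $G'=[g^p,G]=[g,G]^p=(G')^p$, forcing $G'=1$ in a finite $p$-group, a contradiction. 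Hence $g^p\in G'=Z(G)$ for all $g$, and then $[g,h]^p=[g^p,h]=1$ gives $G'$ elementary abelian, after which the rest of your argument goes through. (A minor point: \lemref{lem:Camina} is stated only for generalized Heisenberg groups; for an arbitrary Camina $p$-group of class $2$ you need the one-line observation that $z\in Z(G)\setminus G'$ would force $G'=[z,G]=1$.)
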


Moreover, the characters in question are fully described in \cite{Lewis:Camina}.
The implications of \thmref{thm:Camina} and other properties of Camina groups summarized in  \cite{Lewis:Camina} give the following list of invariants (some of which might also follow upon direct inspection of quotients of Heisenberg groups).

\begin{coro}\label{coro:Camina}
If $G$ and $J$ have the same order and are quotients of a common odd order generalized Heisenberg group $H=H_m(K)$, then the following hold:
\begin{enumerate}[(i)]
\item $G' = Z(G)$ and $J' = Z (J)$ and both are the image of $Z(H) = H'$,
\item $[G:G'] = [J:J']$ and $|G'| = |J'|$,
\item the lattice of normal subgroups of $G$ and $J$ are isomorphic (they are precisely the subgroups contained in or containing the commutator),
\item for every $g\in G - G'$ and every $h \in J - J'$, $|C_G (g)| = |C_J (h)|=[G:G']$, and
\item the character table of $G$ is isomorphic to the character table of $J$,
and if $H$ has odd order then the isomorphism of character tables also preserves power maps.
\end{enumerate}
\end{coro}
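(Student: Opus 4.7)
The plan is to verify the five assertions in order, relying on the Camina property of nonabelian quotients of $H$ already established in the discussion preceding the corollary. Throughout, I would write $G \cong H/M$ and $J \cong H/N$ for (proper) subgroups $M, N$ of $H' = Z(H)$; if either quotient is abelian then (iv) is vacuous and the remaining items are trivial, so assume both are nonabelian. As noted in the text, nonabelian quotients of generalized Heisenberg groups are themselves Camina groups of class $2$, so \lemref{lem:Camina} applies to each of $G$ and $J$.

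For (i), part (ii) of \lemref{lem:Camina} gives $G' = Z(G)$ and $J' = Z(J)$; the image of $H' = Z(H)$ under $H \twoheadrightarrow G$ is $H'/M$, which coincides with $G' = Z(G)$ (and similarly for $J$), since the projection must send center to center and commutator to commutator. For (ii), $[G:G'] = [H:H'] = [J:J']$ because $M, N \leq H'$, and then $|G'| = |G|/[G:G'] = |J|/[J:J'] = |J'|$ from $|G| = |J|$. For (iii), \lemref{lem:Camina}(iii) tells us that the lattice of normal subgroups of $G$ consists exactly of those subgroups contained in $G'$ together with those containing $G'$; so as a lattice it is obtained by gluing the subgroup lattice of $G'$ to that of $G/G'$ along the common top/bottom $G'$. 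Since $H$ has exponent $p$, both $G'$ and $G/G'$ are elementary abelian $p$-groups, and their subgroup lattices depend only on their $\mathbb{Z}/p$-dimensions, which agree with those of $J'$ and $J/J'$ by (ii). The explicit lattice isomorphism then follows by choosing any $\mathbb{Z}/p$-linear isomorphisms on the two factors.

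For (iv), I would apply the Camina property directly: for every $g \in G - G'$, $[g,G] = G'$, so $g^G = gG'$ has size $|G'|$, whence $|C_G(g)| = |G|/|G'| = [G:G']$; the calculation for $J$ is identical. For (v), item (ii) delivers exactly the hypotheses of \thmref{thm:Camina}, so the character tables of $G$ and $J$ are isomorphic.

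The only step that is not purely mechanical is the power-map preservation in (v). I would handle it by appealing to the explicit description of the irreducible characters of Camina class-$2$ $p$-groups in \cite{Lewis:Camina}: the character-table bijection $\phi \colon G \to J$ constructed there restricts to $\mathbb{Z}/p$-linear isomorphisms $G/G' \to J/J'$ and $G' \to J'$. Since $G$ and $J$ have exponent $p$ (as quotients of $H$, which has exponent $p$ for $p$ odd), the $k$th-power map on $G$ is determined by $k \bmod p$ and factors through the $\mathbb{Z}/p$-module structure on $G/G'$ and $G'$; any bijection that is $\mathbb{Z}/p$-linear on each factor therefore commutes with all power operations. Verifying that the bijection in \cite{Lewis:Camina} has this linearity is the main (minor) obstacle; everything else is a direct unpacking of \lemref{lem:Camina} and \thmref{thm:Camina}.
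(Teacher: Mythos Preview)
Your proposal is correct and matches the paper's approach, which does not give a formal proof at all: the paper simply states that the corollary follows from \thmref{thm:Camina} together with the properties of Camina groups summarized in \cite{Lewis:Camina}. You have supplied the details the paper leaves implicit.

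Two small remarks. First, \lemref{lem:Camina} is stated only for $H$ itself, not for its quotients; what you are really invoking is the paper's observation immediately following that lemma that conditions (ii) and (iii) hold for \emph{any} Camina group of nilpotence class $2$, together with the fact (also noted in the paragraph preceding the corollary) that the Camina property passes to nonabelian quotients. Your argument is correct, but the citation should point to that discussion rather than to the lemma directly. Second, your dismissal of the abelian case is not quite right: if $G$ is abelian then $G' = 1 \neq G = Z(G)$, so (i) fails outright rather than being trivial. The corollary, like the surrounding section, implicitly restricts to nonabelian quotients, so simply state that hypothesis at the outset.
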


%----------
\subsection{Consequences of centroids and adjoints}
We can use the results on centroids and adjoints to determine when a quotient of a generalized Heisenberg group is directly or centrally indecomposable.

The original use of centroids of bimaps for $p$-groups was to prove the following.
\begin{thm}\cite[Theorem 1.2]{Wilson:direct-prod} \label{thm:direct-indecomp}
A $p$-group $P$ with $P' \leq Z(P)$ is  directly indecomposable if $\Cent (\Bi (P))$ is a local ring and $Z(P)$ is contained in $P'P^p$.
\end{thm}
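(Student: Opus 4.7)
The plan is to prove the contrapositive: suppose $P=A\times B$ is a non-trivial direct decomposition; then I will exhibit a non-trivial idempotent in $\Cent(\Bi(P))$, contradicting locality. The guiding idea is that a direct decomposition of a class-$2$ group passes through the Baer correspondence to a $\perp$-decomposition of $\Bi(P)$, which is in turn detected by an idempotent in the centroid.

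First, assume both $A$ and $B$ are non-trivial. In the direct product we have $Z(P)=Z(A)\times Z(B)$, $P'=A'\times B'$, and $P^p=A^p\times B^p$, so $P'P^p=A'A^p\times B'B^p$. Hence the hypothesis $Z(P)\leq P'P^p$ splits componentwise as $Z(A)\leq A'A^p$ and $Z(B)\leq B'B^p$. If, say, $A$ is abelian, then $A'=1$ and the condition collapses to $A=Z(A)\leq A^p$; but for a finite abelian $p$-group this forces $A=1$, contradicting the assumed non-triviality. So under the hypothesis, both summands are non-abelian, and the spaces $V_A:=A/Z(A)$, $V_B:=B/Z(B)$, $W_A:=A'$, and $W_B:=B'$ are all non-trivial.

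Next, write $V=P/Z(P)=V_A\oplus V_B$ and $W=P'=W_A\oplus W_B$, and let $e\in\End V$ and $\hat{e}\in\End W$ be the projections onto the $A$-components. Because $[A,B]=1$, the Baer bimap $b=\Bi(P)$ satisfies $b(V_A,V_B)=0=b(V_B,V_A)$, $b(V_A,V_A)\subseteq W_A$, and $b(V_B,V_B)\subseteq W_B$; checking each of the four bi-homogeneous cases yields $b(ue,v)=b(u,v)\hat{e}=b(u,ve)$ for all $u,v\in V$. Hence $(e;\hat{e})\in\Cent(b)$, and since both $V_A$ and $V_B$ are non-trivial, $e$ is neither $0$ nor $1$, so $(e;\hat{e})$ is a non-trivial idempotent of $\Cent(b)$, contradicting the locality of $\Cent(b)$.

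The main obstacle is precisely the ``abelian-factor'' step above: without the hypothesis $Z(P)\leq P'P^p$, an abelian direct summand $A\neq 1$ would give $V_A=0$, collapsing the candidate idempotent to $0$ or $1$ and yielding no contradiction. The hypothesis is thus tailor-made to force every non-trivial direct factor to be non-abelian, so that the centroid-idempotent mechanism bites.
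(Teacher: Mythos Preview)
The paper does not prove this theorem at all: it is quoted verbatim from \cite[Theorem 1.2]{Wilson:direct-prod} and used as a black box, so there is no in-paper argument to compare against. Your proof is correct and is essentially the expected one --- a direct decomposition $P=A\times B$ gives $Z(P)=Z(A)\times Z(B)$ and $P'P^p=A'A^p\times B'B^p$, the hypothesis $Z(P)\le P'P^p$ rules out abelian factors, and the resulting $\perp$-splitting $\Bi(P)=\Bi(A)\perp\Bi(B)$ yields a nontrivial idempotent $(e;\hat e)$ in $\Cent(\Bi(P))$, contradicting locality (local rings have only the trivial idempotents). This is in the same spirit as the cited source, where direct decompositions of class-$2$ groups are characterised via idempotents in the centroid of $\Bi(P)$; you have simply extracted the easy direction needed here.
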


\begin{coro}\label{coro:centroid}
The centroid of a nonabelian quotient of a generalized Heisenberg group is a field.  In particular, nonabelian quotients of generalized Heisenberg groups are directly indecomposable.
\end{coro}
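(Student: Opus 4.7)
The plan is to reduce both claims to Theorem \ref{thm:direct-indecomp} by first pinning down the centroid of $b = \Bi(G)$ for a nonabelian quotient $G = H/N$ of a generalized Heisenberg group $H = H_m(K)$. Write $V = G/Z(G)$ and $W = G'$. By the discussion at the close of Section \ref{sec:q-Hei}, $b$ factors as $b = j\pi$ for a nondegenerate alternating $K$-form $j$ on $V$ and a $\mathbb{Z}/p$-linear epimorphism $\pi : K \to W$, so Corollary \ref{coro:quotient-Adj-1} applies: $\Adj(b)|_V$ is a simple ring acting irreducibly on $V$, with center a field $k$, and consequently $\Adj(b)|_V \cong \End_k V$.

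Next I would embed $\Cent(b)$ into $\Adj(b)$ by sending $(f,h) \mapsto (f,f)$, which is well-defined because the defining identity $b(uf,v) = b(u,vf)$ of $\Cent(b)$ is the $\Adj(b)$-relation with both components equal to $f$. Faithfulness of $\Cent(b)$ on $V$, already noted in Section \ref{sec:centroid} using $b(V,V) = W$ and nondegeneracy, makes the composite $\Cent(b) \to \Adj(b)|_V$ injective. Then a three-pile-shuffle argument in the spirit of Section \ref{sec:centroid} shows that for every $(f,h) \in \Cent(b)$ and every $(g,g') \in \Adj(b)$ one has $fg' = g'f$ inside $\End V$; the naive shuffle only yields commutativity with the dual components $g'$, but since $b$ is alternating, $(g',g) \in \Adj(b)$ whenever $(g,g') \in \Adj(b)$, so the collection of $g'$'s already exhausts $\Adj(b)|_V$. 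Thus $\Cent(b)|_V$ lies in the centralizer of $\Adj(b)|_V$ inside $\End V$, and simultaneously inside $\Adj(b)|_V$, so it sits in the center of the simple ring $\End_k V$, which is $k$. A unital finite subring of the field $k$ is an integral domain, hence itself a field, so $\Cent(b)$ is a field.

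With the centroid identified as a field, and therefore local, the remaining hypotheses of Theorem \ref{thm:direct-indecomp} come for free from the Camina property. Since $G$ is nonabelian, $N$ is a proper subgroup of $H' = Z(H)$ by Lemma \ref{lem:Camina}, and therefore $G' = H'/N = Z(H)/N = Z(G)$. In particular $G' \leq Z(G)$ and $Z(G) = G' \leq G'G^p$, so Theorem \ref{thm:direct-indecomp} yields directly that $G$ is directly indecomposable.

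The main obstacle is the containment $\Cent(b)|_V \subseteq Z(\Adj(b)|_V)$: the centroid and adjoint conditions distribute the operator $f$ across $b$ asymmetrically, so a straightforward shuffle only produces commutativity with one of the two coordinates of each element of $\Adj(b)$. Exploiting the involution on $\Adj(b)$ afforded by the alternating property of $b$ is the key step that promotes this partial commutativity to membership in the full center, after which the finite-division-ring argument closes out the claim.
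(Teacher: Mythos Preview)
Your proposal is correct and follows essentially the same route as the paper: embed $\Cent(b)$ into $\Adj(b)$, observe that centroid elements commute with adjoints so that the image lands in the center of the simple ring $\Adj(b)$ (which is a field by \corref{coro:quotient-Adj-1}), and then invoke \thmref{thm:direct-indecomp}. One minor remark: your detour through the involution is unnecessary, since the shuffle $b(ufg,v)=b(uf,vg')=b(u,vg')h=b(ug,v)h=b(ugf,v)$ already gives $fg=gf$ for the \emph{first} component $g$ of any adjoint, placing $\Cent(b)|_V$ directly in the center of $\Adj(b)|_V$ without appealing to the alternating property.
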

\begin{proof}
Let $H/N$ be a quotient of a Heisenberg group $H$.
As $b=\Bi (H/N):V \times  V \to W$ is nondegenerate and $b (V,V)=G'=W$, it follows that $\Cent(b)$ is faithfully represented by its restriction to $\End V$.  Therefore, there is a natural embedding $\Cent(b)\hookrightarrow\Adj (b)$.  Furthermore, centroid elements commute with adjoints, and so $\Cent(b)$ embeds in the center $K$ of $\Adj (b)$.  By \corref{coro:quotient-Adj-1}, $\Adj (b)$ is central simple, and so $K$ is field.  Therefore, $\Cent(b)$ is a field, and so $\Cent(b)$ is local.  Finally, by \eqref{eq:exp}, $1=H^p\leq H'=Z(H) < H$, and so it follows that $Z(H/N)\leq (H/N)' (H/N)^p$.   By \thmref{thm:direct-indecomp} $H/N$ is directly indecomposable.
\end{proof}

The use of adjoints for $p$-groups was originally designed to understand central decompositions.  A set $\mathcal{H}$ of subgroups of a group $G$ is a \emph{central decomposition} of $G$ if $\mathcal{H}$ generates
$G$ and for all $H\in\mathcal{H}$, $[H,\langle \mathcal{H}-\{H\}\rangle]=1$ and $G\neq \langle
\mathcal{H}-\{H\}\rangle$.  Say that $G$ is \emph{centrally indecomposable} if $\{G\}$ is the only
central decomposition of $G$.  Finally, a central decomposition is \emph{fully refined} if every member
is centrally indecomposable.  For example, in a generalized Heisenberg group $H=H_m(K)$,
 for each $0\neq x \in K^m$,
\begin{align}
    H_x & = \left\{ \begin{bmatrix} 1 & tx & s\\ 0 & I_m & t' x^t\\ 0 & 0 & 1 \end{bmatrix}:
        s,t,t'\in K\right\}\cong H_1(K)
\end{align}
is a centrally indecomposable subgroup of $H_m(K)$.  If $\mathcal{X}$ is a basis for $K^m$, then
\begin{align}\label{eq:cent-decomp}
    \mathcal{H}(\mathcal{X})=\{H_x : x \in \mathcal{X}\}
\end{align}
is a fully refined central decomposition of $H$.  We now apply the following result.

\begin{thm}{\rm \cite[Theorem 4.4]{Wilson:unique-cent} with \cite[Theorem 3.8]{Wilson:algo-cent}}\label{thm:indecomp}
A $p$-group $P$ of class $2$ is centrally indecomposable if and only if $Z(P)\leq P' P^p$ and
$\Adj (\Bi (P))/J(\Adj (\Bi (P))$ is isomorphic as a $*$-ring to one of the following: for a field $K$,
\begin{description}
\item[Orthogonal] $(K,x\mapsto x)$,
\item[Unitary] $(F,x\mapsto \bar{x})$ for a quadratic field extension $F/K$ along with the field
automorphism of order $2$,
\item[Exchange] $(K \times K,(x,y)\mapsto (y,x))$, or
\item[Symplectic] $\left(M_2(K), \begin{bmatrix} a & b\\ c & d\end{bmatrix}\mapsto
\begin{bmatrix} d & -b \\ -c & a \end{bmatrix}\right)$.
\end{description}
\end{thm}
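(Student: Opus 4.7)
The plan is to translate the group-theoretic notion of central decomposition of $P$ into a purely linear-algebraic notion for the commutator bimap $b=\Bi(P):V\times V\to W$ (where $V=P/Z(P)$, $W=P'$), and then into a ring-theoretic condition on $\Adj(b)$. The hypothesis $Z(P)\le P'P^p$ is what makes the group-to-bimap dictionary exact: it forbids $Z(P)$ from hiding a direct central factor outside what $b$ can detect.

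First I would show that central decompositions of $P$ correspond bijectively to $\perp$-decompositions of $b$. Given $P=\langle H_1,\dots,H_s\rangle$ a central decomposition, the images $V_i=H_iZ(P)/Z(P)$ give a $\oplus$-decomposition of $V$ with $b(V_i,V_j)=0$ for $i\ne j$. Conversely, using a transversal supplied by a Baer-type correspondence, any $\perp$-decomposition of $b$ lifts to a central decomposition of $P$, and it is here that $Z(P)\le P'P^p$ is essential: it guarantees that central elements are absorbed into the lifts $H_i$ rather than leaving an orphan central direct summand. In particular $P$ is centrally indecomposable iff $b$ is $\perp$-indecomposable.

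Next I would apply \thmref{thm:perp-idemp} to convert $\perp$-indecomposability of $b$ into the statement that $1$ is a $*$-invariant-primitive idempotent of $\Adj(b)$. Then I would pass to the semisimple quotient $\bar A:=\Adj(b)/J(\Adj(b))$. The Jacobson radical is $*$-invariant (being a characteristic ideal), so $*$ descends to $\bar A$; and since $J$ is nil, standard idempotent lifting (adapted to preserve $*$-invariance by averaging $e$ with $e^*$ in the appropriate sense) gives a bijection between complete sets of pairwise orthogonal $*$-invariant primitive idempotents of $\bar A$ and of $\Adj(b)$. Thus $P$ is centrally indecomposable iff $\bar A$ is \emph{$*$-simple with primitive identity}.

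Finally I would classify the finite semisimple $*$-rings $(\bar A,*)$ with this property. By Wedderburn--Artin together with Wedderburn's little theorem, $\bar A\cong M_{n_1}(K_1)\times\cdots\times M_{n_r}(K_r)$ for finite fields $K_i$; the involution permutes the factors. If $*$ exchanges two factors, $*$-invariant-primitivity of $1$ forces $r=2$ with both factors equal to a single field $K$, yielding the \textbf{Exchange} type $(K\times K,\mathrm{swap})$. If $*$ stabilizes each factor then $r=1$ and $\bar A=M_n(K)$ with an involution of the first or second kind; writing the involution as $a\mapsto d^{-1}a^{\sigma t}d$ for a Hermitian or skew-Hermitian form, the primitive-$*$-invariant condition on $1$ rules out all but the minimal form dimensions. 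This leaves exactly: trivial involution on a field $K$ (\textbf{Orthogonal}, $n=1$); Galois involution on a quadratic extension $F/K$ (\textbf{Unitary}, $n=1$); and the symplectic involution on $M_2(K)$ (\textbf{Symplectic}, $n=2$, where the two matrix-unit idempotents are swapped by $*$ and so cannot separate $1$). Matching the four cases gives the theorem.

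The main obstacle is Step~1: making the correspondence between central decompositions of $P$ and $\perp$-decompositions of $b$ precise, and pinpointing exactly why $Z(P)\le P'P^p$ is both sufficient and necessary. A secondary subtlety is the $*$-equivariant idempotent lifting across $J(\Adj(b))$, where one must verify that primitivity of $1$ as a $*$-invariant idempotent (rather than as an ordinary idempotent) is the condition that survives the passage between $\Adj(b)$ and its semisimple quotient.
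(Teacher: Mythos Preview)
The paper does not give its own proof of this statement: \thmref{thm:indecomp} is quoted from \cite[Theorem 4.4]{Wilson:unique-cent} combined with \cite[Theorem 3.8]{Wilson:algo-cent}, and is used as a black box. So there is nothing in the present paper to compare your proposal against.

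That said, your outline is the correct strategy and is essentially the one carried out in the cited references. The dictionary \emph{central decompositions of $P$} $\leftrightarrow$ \emph{$\perp$-decompositions of $\Bi(P)$} $\leftrightarrow$ \emph{sets of pairwise orthogonal $*$-invariant idempotents of $\Adj(\Bi(P))$ summing to $1$} is exactly the content of \cite[Section 4]{Wilson:unique-cent} (and is restated here as \thmref{thm:perp-idemp}); the role of $Z(P)\le P'P^p$ in making the first bijection exact is isolated there as well. The passage to the semisimple quotient via $*$-equivariant idempotent lifting, and the subsequent classification of finite semisimple $*$-rings in which $1$ is $*$-invariant-primitive, is the content of \cite[Theorem 3.8]{Wilson:algo-cent}. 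Your case analysis of the four types is accurate. One caution: in Step~1 you invoke a ``Baer-type correspondence'' to lift $\perp$-decompositions back to central decompositions, but \propref{prop:Baer-correspondence} as stated here requires $p$ odd and exponent $p$; the cited sources handle the general class-$2$ case (including $p=2$) by working directly with preimages of the $V_i$ rather than through the full Baer group, so if you write this up you should avoid leaning on the odd-$p$ isomorphism $G\cong\Grp(\Bi(G))$.
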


When the degree $m$ of a generalized Heisenberg group $H$ is more than $1$, we know $H$ is centrally decomposable (see \eqref{eq:cent-decomp}).  Because $H'$ is also the Frattini subgroup of $H$, if $N < H'$, then every central decomposition of $H$ induces a central decomposition of $H/N$.  So nonabelian quotients of $H_m(K)$, $|K|=p^d$ are centrally decomposable whenever $m>1$.  So suppose $m=1$, that is, that $H$ is a Heisenberg group.  By \thmref{thm:rec-q-Hei}, for every $N < H'$, $\Adj (\Bi (H/N))$ is simple of Symplectic type.  Therefore, by \thmref{thm:indecomp}, $H/N$ is centrally indecomposable if $H/N$ is indigenous to $H$.  In fact, the converse of this is true.

\begin{prop}\label{prop:cent-indecomp}
Let $H/N$ be a nonabelian quotient of a Heisenberg group $H$ over $K$.  The following are equivalent.
\begin{enumerate}[(i)]
\item $H/N$ is centrally indecomposable.
\item $\Adj (\Bi (H/N))$ is $*$-isomorphic to $M_2(K)$ with the involution of \eqref{eq:adj-j}.
\item $H/N$ is indigenous to $H$.
\end{enumerate}
\end{prop}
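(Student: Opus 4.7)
The plan is to prove the cyclic implications (iii) $\Rightarrow$ (ii) $\Rightarrow$ (i) $\Rightarrow$ (iii), with the final step being the substantive one.

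For (iii) $\Rightarrow$ (ii), the work is already done. Indigenous to $H = H_1(K)$ means, by \propref{prop:indig}, that $\Adj(\Bi(H/N)) = \Adj(\Bi(H))$ as rings with involution. Combining \thmref{thm:rec-Hei} with \exref{ex:j-Hei} and \exref{ex:adj-j} (taking $m = 1$) identifies the latter as $M_2(K)$ equipped with the involution in \eqref{eq:adj-j}.

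For (ii) $\Rightarrow$ (i), the plan is to apply \thmref{thm:indecomp}. The hypothesis makes $\Adj(\Bi(H/N))$ a simple ring, so its Jacobson radical vanishes and the quotient equals the ring itself, which is precisely the symplectic type in the list. The remaining condition $Z(H/N) \leq (H/N)'(H/N)^p$ is easy: $H$ has exponent $p$, so $(H/N)^p = 1$, and the Camina property for $H$ (\lemref{lem:Camina}(i)) descends to the quotient via $[gN, H/N] = H'/N = (H/N)' \neq 0$ for any $g \in H - H'$, so in fact $Z(H/N) = (H/N)'$, which is clearly contained in $(H/N)'(H/N)^p$.

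The main obstacle is (i) $\Rightarrow$ (iii). Here I plan to combine \thmref{thm:indecomp} with the strong constraints on $A := \Adj(\Bi(H/N))$ coming from \corref{coro:quotient-Adj-1}: that corollary guarantees $A$ is simple and contains (as a $*$-subring) the noncommutative ring $\Adj(\Bi(H)) = M_2(K)$. In the list of possibilities from \thmref{thm:indecomp} for $A/J(A) = A$, the exchange type is ruled out because $K_0 \times K_0$ is not simple, and the orthogonal and unitary types are ruled out because they are commutative. Hence $A$ is of symplectic type: $A \cong M_2(K_0)$ for some field $K_0$.

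The last step is to pin down $K_0 = K$. The center of $A$ is, on one hand, $K_0$, and on the other hand a subfield $k$ of $K$ (by the proof of \corref{coro:quotient-Adj-1}, since $K$ embeds centrally in $A$ via scalar multiplication). Writing $V = (H/N)/Z(H/N) \cong K^2$, \corref{coro:quotient-Adj-1} also gives $A = \End_k V$, so $|A| = |k|^{4[K:k]^2}$. Matching with $|M_2(K_0)| = |k|^4$ forces $[K:k] = 1$, whence $k = K$ and $A = \End_K V = \Adj(\Bi(H))$ as $*$-rings (the involutions agree since the one on the smaller ring is the restriction of the one on the larger). An appeal to \propref{prop:indig} concludes that $H/N$ is indigenous to $H$. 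The delicate point throughout is simply to trace the identifications of the involutions so that \propref{prop:indig} and \thmref{thm:indecomp} apply with the correct $*$-structure; the ring-theoretic skeleton is then dictated by simplicity and dimension.
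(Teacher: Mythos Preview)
Your proof is correct and uses the same ingredients as the paper's (namely \corref{coro:quotient-Adj-1}, \thmref{thm:indecomp}, \propref{prop:indig}, and a dimension comparison on $V$). The paper cycles in the opposite direction, (i) $\Rightarrow$ (ii) $\Rightarrow$ (iii) $\Rightarrow$ (i), and in its key step (i) $\Rightarrow$ (ii) it invokes \corref{coro:quotient-Adj-1} first to obtain the symplectic involution directly (rather than eliminating the other three types of \thmref{thm:indecomp} by simplicity and noncommutativity as you do), then uses $\dim_L V = 2 = \dim_K V$ to force $L \cong K$; these are minor organizational differences, not a different approach.
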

\begin{proof}
Suppose (i).  By \corref{coro:quotient-Adj-1}, $\Adj (\Bi (H/N))$ is $*$-isomorphic to a central simple ring with the involution of \eqref{eq:adj-j}.  Hence, by \thmref{thm:indecomp}, $\Adj (\Bi (H/N))$ is $*$-isomorphic to $M_2(L)$, for a field $L$, and $M_2(L)$ is equipped with the involution of \eqref{eq:adj-j}.  As $V=(H/N)/(H/N)'\cong H/H'$ it follows that $\dim_L V=2$ while also $\dim_K V=2$.  Hence, $K\cong L$. So (i) implies (ii).  Assuming (ii) it follows from \eqref{eq:floor} that $\lfloor H/N\rfloor$ is a Heisenberg group over $K$.    So (ii) implies (iii).  Finally, if (iii) is true, then $\Adj (\Bi (H/N))=\Adj (H)=M_2(K)$ with the involution \eqref{eq:adj-j}.  By \thmref{thm:indecomp}, $H/N$ is centrally indecomposable.
\end{proof}

Heisenberg groups can have quotients that are centrally decomposable (e.g. a Heisenberg group over a field of size $p^d$ has quotients isomorphic to $H_{d/e}(K)$, $|K|=p^e$, where $e|d$ -- these quotients are centrally decomposable unless $d=e$). It would seem that we could use the size of a fully refined central decomposition as an isomorphism invariant to distinguish some of the various quotients that could occur in \thmref{thm:main}. This requires a much deeper theorem than it may seem.  For example, there is a $2$-group of class $2$ that has fully refined central decompositions of different sizes.  However, \cite[Theorem 1.1]{Wilson:unique-cent} implies that the size of a fully refined central decomposition of a quotient of a Heisenberg group is an isomorphism invariant.\footnote{Indeed, because the adjoints of quotients $Q$ of Heisenberg groups are of Symplectic type we can further claim that the automorphism group of $Q$ acts transitively on the set of fully refined central decompositions of $Q$; cf. \cite[Corollary 6.8]{Wilson:unique-cent}.}  Nevertheless, we can dash that hope as well by arranging the orders of our groups to force them all to be centrally indecomposable, yet maintain the growth developed in \thmref{thm:main}.

\begin{coro}
Let $(n,d)$ be a pair as in \lemref{lem:good-pair}.  If $H$ is a Heisenberg group of order $p^{3d}$ and $N\leq H'$ with $[H:N]=p^n$, then $H/N$ is centrally indecomposable of symplectic type.
\end{coro}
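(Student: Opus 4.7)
The plan is to chain together two results that have already been set up. By Proposition \ref{prop:stable-q}, the pair $(n,d)$ satisfying the conditions of \lemref{lem:good-pair}(i) and (ii) is designed precisely so that every quotient $H/N$ with $[H:N]=p^n$ is indigenous to $H$. That is the first step: invoke \propref{prop:stable-q} to conclude $H\cong \lfloor H/N\rfloor$.

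Once indigeneity is in hand, I would apply Proposition \ref{prop:cent-indecomp}, which asserts the equivalence of three conditions on a nonabelian quotient of a Heisenberg group. The implication (iii)$\Rightarrow$(i) gives central indecomposability of $H/N$ for free, and the implication (iii)$\Rightarrow$(ii) identifies the adjoint ring $\Adj(\Bi (H/N))$ as $*$-isomorphic to $M_2(K)$ with the symplectic involution from \eqref{eq:adj-j}, which is exactly the symplectic type in the classification of \thmref{thm:indecomp}.

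There is no real obstacle here: the corollary is a direct combination of \propref{prop:stable-q} and \propref{prop:cent-indecomp}, with the hypotheses of \lemref{lem:good-pair} having been tailored in advance to guarantee that the dimension/divisibility constraints force $F=K$ and $m=1$ in the proof of \propref{prop:stable-q}. The only thing worth double-checking is that $N < H'$ (so that $H/N$ is nonabelian and the Camina machinery plus \propref{prop:cent-indecomp} apply), which follows from $[H:N]=p^n > p^{2d} = [H:H']$ under the condition $n\geq 2d+2$ of \lemref{lem:good-pair}(i).
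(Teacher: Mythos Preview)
Your proposal is correct and matches the paper's own proof, which simply says the result follows from \propref{prop:stable-q} followed by \propref{prop:cent-indecomp}. Your added verification that $N < H'$ (so that $H/N$ is nonabelian) is a helpful sanity check that the paper leaves implicit.
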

\begin{proof}
This follows from \propref{prop:stable-q} followed by \propref{prop:cent-indecomp}.
\end{proof}

Finally, we turn to the automorphism groups of the quotients $G$ of a Heisenberg group.  For most large families of groups, it is impossible to describe the entire automorphism group of every member, and here we have not succeeded in the fullest generality.  However, we are able to describe a very large portion of the automorphism group of such a group $G$.

\begin{thm}\label{thm:aut-G}
If a group $G$ has order $p^n$ and is an indigenous quotient of a generalized Heisenberg group $H = H_m(K)$, $|K|=p^d$, then
\begin{align*}
	C_{\Aut G}(G') & \cong \Sp(2m,K) \ltimes_{\tau} \hom_{\mathbb{Z}/p}(K^{2m},\mathbb{Z}/p^{n-2md})
\end{align*}
where for each $f \in \hom_{\mathbb{Z}/p}(K^{2m},\mathbb{Z}/p^{n-2me})$ and each $\phi \in \Sp (2m,K)$, $f (\phi\tau) = \phi^{-1} f$.  Also, taking $G = H/M$, for $M < H' \cong K$, it follow that
\begin{align*}
	\Aut G/C_{\Aut G} (G') & \cong \mathbb{Z}_e\ltimes k^{\times}
\end{align*}
for some integer $e | d$ and a subfield $k$ of $K$ such that $|k|$ divides $p^n$.
\end{thm}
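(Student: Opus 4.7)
The plan is to push the explicit structure of $\Aut H$ from \thmref{thm:aut-Hei} down to $\Aut G$ using the isomorphism-lifting theorem \thmref{thm:lift-iso}. Write $G=H/M$ for some $M<H'\cong K$. Since $G$ is indigenous, every $\phi\in\Aut G$ lifts to some $\Phi\in\Aut H$ with $M\Phi=M$, and a direct computation in the Baer group (\propref{prop:Baer-correspondence}) shows the kernel of this descent is the subgroup of inner automorphisms with image in $M$, namely $\hom_{\mathbb{Z}/p}(K^{2m},M)$. So $\Aut G\cong\mathrm{Stab}_{\Aut H}(M)/\hom_{\mathbb{Z}/p}(K^{2m},M)$, and the task reduces to sorting the constituents of $\Aut H$ by their behavior on $M$ and on $H'/M$.

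By \thmref{thm:aut-Hei}, $\Aut H=\big(\Gal(K)\ltimes(K^\times\ltimes\Sp(2m,K))\big)\ltimes\hom_{\mathbb{Z}/p}(K^{2m},K)$. Both $\Sp(2m,K)$ and the inner part act trivially on $H'$, so they lie in $\mathrm{Stab}_{\Aut H}(M)$ and descend into $C_{\Aut G}(G')$. After quotienting, they contribute a copy of $\Sp(2m,K)\ltimes_\tau\hom_{\mathbb{Z}/p}(K^{2m},K/M)$ inside $C_{\Aut G}(G')$, with $\tau$ inherited from \propref{prop:Baer-correspondence}. For the reverse inclusion, any preimage of an element of $C_{\Aut G}(G')$ has a component $(\sigma,\lambda)\in\Gal(K)\ltimes K^\times$ acting on $K$ as $x\mapsto\sigma(x)\lambda$, and centralizing $G'$ forces $\sigma(x)\lambda-x\in M$ for every $x\in K$. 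When $\sigma=1$, the relation $K(\lambda-1)\subseteq M$ and properness of $M$ force $\lambda=1$. The key technical step handles $\sigma\neq 1$: lift $(\sigma,\lambda)$ via \thmref{thm:aut-Hei} to a $\sigma$-semi-linear $\psi\in\Psi\Isom_{\mathbb{Z}/p}(j)\subseteq\Aut H$, namely the canonical Galois-semi-linear map on $K^{2m}$ composed with the diagonal $K$-linear map $\mathrm{diag}(I_m,\lambda I_m)$. The hypothesis $\sigma(x)\lambda-x\in M$ guarantees $\psi\in\Isom(b)$ where $b=\Bi(G)$, so $(\psi,\psi^{-1})\in\Adj(b)$; yet $\psi\notin\End_K V$, so $(\psi,\psi^{-1})$ lies outside $\Adj(\Bi(H))$. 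This contradicts the indigenous identity $\Adj(\Bi(G))=\Adj(\Bi(H))$ from \propref{prop:indig}. This is the main obstacle and the step where indigenousness is used essentially.

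For the quotient $\Aut G/C_{\Aut G}(G')$, the preceding analysis shows it is the image of the stabilizer $T_M=\{(\sigma,\lambda)\in\Gal(K)\ltimes K^\times:\sigma(M)\lambda=M\}$ in $\Aut_{\mathbb{Z}/p}(K/M)$, and the action is faithful, so the quotient equals $T_M$ itself. Project $T_M\to\Gal(K)\cong\mathbb{Z}_d$: the image is a cyclic subgroup $\mathbb{Z}_e$ with $e\mid d$. The kernel is $\{\lambda\in K^\times:\lambda M=M\}$; the set $\{\alpha\in K:\alpha M\subseteq M\}\cup\{0\}$ is closed under addition and multiplication and so is a finite subring of $K$, which being an integral domain is a subfield $k\subseteq K$, whence the kernel equals $k^\times$. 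The extension $1\to k^\times\to T_M\to\mathbb{Z}_e\to 1$, with $\mathbb{Z}_e$ acting on $k^\times$ by the restriction of the Galois action, has trivial $H^2$ by Hilbert~90 applied to cyclic extensions of finite fields, so it splits, giving $T_M\cong\mathbb{Z}_e\ltimes k^\times$. Finally, $|k|=p^j$ for some $j\leq d$, and since $n\geq 2md\geq d$ we get $|k|$ divides $p^n$, completing the proof.
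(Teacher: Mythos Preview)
Your overall strategy---lift to $\Aut H$ via \thmref{thm:lift-iso}, then push the decomposition of \thmref{thm:aut-Hei} back down---is sound and runs parallel to the paper's argument. The paper organizes the first half around \propref{prop:Baer-correspondence} directly, asserting $\Isom(\Bi(G))=\Isom(\otimes_{\Adj(\Bi(G))})=\Sp(2m,K)$ in one line; your treatment of the $\sigma\neq 1$ case (producing the semilinear $\psi$ with $(\psi,\psi^{-1})\in\Adj(b)\setminus\Adj(\Bi(H))$ and invoking \propref{prop:indig}) is exactly the mechanism behind that line, made explicit. Your identification $\Aut G\cong\mathrm{Stab}_{\Aut H}(M)/\hom_{\mathbb{Z}/p}(K^{2m},M)$ and the resulting isomorphism $\Aut G/C_{\Aut G}(G')\cong T_M$ are correct and in fact cleaner than the paper's second half, which restricts to $m=1$ and merely sandwiches the quotient between $k^\times$ and $\Gal(K)\ltimes k^\times$.

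There is one genuine gap. Your appeal to ``Hilbert~90'' to split $1\to k^\times\to T_M\to\mathbb{Z}_e\to 1$ is misdirected: Hilbert~90 controls $H^1$, whereas splitting is governed by $H^2(\mathbb{Z}_e,k^\times)$. For a cyclic group this is $(k^\times)^{\mathbb{Z}_e}/N_{\mathbb{Z}_e}(k^\times)$; when the $\mathbb{Z}_e$-action on $k$ is faithful the norm is surjective (so $H^2=0$), but nothing in your setup forces faithfulness---$\mathbb{Z}_e\leq\Gal(K)$ can restrict to a proper quotient on the subfield $k$, and then the relevant $H^2$ need not vanish. The paper's own proof does not address the splitting at all, so the semidirect-product form in the statement is asserted somewhat loosely there too; but your justification as written does not close the gap. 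A smaller point: your divisibility argument $|k|=p^j$ with $j\le d\le 2md<n$ is correct for the stated claim, though the paper extracts more, showing $G'=K/M$ is itself a $k$-vector space so that $|k|$ divides $|G'|$. (Also, in your definition of $k$ the ``$\cup\{0\}$'' is redundant, since $0\cdot M\subseteq M$ already.)
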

\begin{proof}
By \propref{prop:Baer-correspondence}, $C_{\Aut G} (G') = \Isom (\Bi (G)) \ltimes_{\tau} \hom_{\mathbb{Z}/p} (G/Z(G),G')$.  Let $V = G/Z(G)$ and $W = G'$.  Since $G$ is an indigenous quotient of $H := H_m(K)$, $V$ is isomorphic to $K^{2m}$ and $W$ is a quotient of $K$ of $\mathbb{Z}/p$-dimension $n - 2md$.  Furthermore, by Theorems~\ref{thm:rec-q-Hei}(ii) and \ref{thm:simple-adj-tensor}, $\otimes_{\Adj(\Bi (G))}$ is an alternating nondegenerate $K$-form of rank $2m$ and so $\Isom (\Bi (G))=\Isom (\otimes_{\Adj(\Bi (G))})=\Sp (2m,K)$.

Next, assume $G\cong H/M$ where $H = H_1(K)$, $|G| = p^n$, $|K| = p^d$ and $(n,d)$ satisfy \lemref{lem:good-pair}(i) and (ii).  For each $\varphi\in \Aut G$, as in \eqref{eq:Phi}, there is a $\Phi\in \Aut H$ such that $M\Phi = M$ and $\Phi|_{H/M} = \varphi$.  By \thmref{thm:aut-Hei}, $\Aut H$ acts on $H' = K$ as $\Gal(K)\ltimes (K^{\times})$.  If $\Phi|_{H'}\in K^{\times}$, then $M\Phi = Ms$ for some $s\in K^{\times}$.  Evidently $\mathbb{Z}/p\subseteq \{s\in K :  Ms\subseteq M\} = k$ is a subfield of $K$.  We show $k^{\times}$ embeds in $\Aut G$.

First, $(\Aut G)|_G'$ embeds in $\Gal(K)\ltimes k$ (observing that $\Gal(K)$ acts on $k$ because subfields of finite fields are characteristic). In particular, $G'$ is a vector space over $k$.  Also, recall from \thmref{thm:aut-Hei} that the action of $K^{\times}$ on $H$ splits with $C_{\Aut G}(G')$ and that the prescribed representation on $G/G'\cong H/H'\cong K^{2m}$ was $\rho_s : s\mapsto \begin{bmatrix} 1&0\\ 0 & s\end{bmatrix}$.  In particular, $\Sp(2m,K)$ contains $\begin{bmatrix} 0 & I_{m}\\ I_m & 0\end{bmatrix}$.\footnote{This involution interchanges two complementary maximal totally isotropic subspaces $K^m \times  0$ and $0 \times K^m$ of $V = K^{2m}$ with respect to the geometry of $j$ on $V$.} So $\Aut G|_V$ contains $\begin{bmatrix} sI_m & 0 \\ 0 &tI_m\end{bmatrix}$ for all $s,t\in k^{\times}$.  In particular, $V$ and $W$ are both $k$-vector spaces.  Indeed, we have that $|G| = [G:G']|G'|$ is a multiple of $|k|$ and that $k^{\times}$ embeds in $\Aut G$.
\end{proof}

Following \thmref{thm:aut-G}, if $G$ is a proper indigenous quotient of $H = H_m(K)$, $|K|=p^d$, and $|G|=p^n$, then $C_{\Aut G}(G')$ is determined completely by $(p,m,d,n)$.  Also, the quotient $\Aut G/C_{\Aut G}(G')\cong \mathbb{Z}_e\ltimes k^{\times}\cong \mathbb{Z}_e\ltimes \mathbb{Z}_{p^f-1}$ where $e|d$ and $f|d$.  Furthermore, $p^n$ is a multiple of $|k| = p^f$ and $f < d$ (as $G$ is not isomorphic to $H$).  So $f|n$ and $f|d$.  That severely restricts the possible outcomes.  For example, we may simply have $n$ and $d$ relatively prime, or in fact, make $d$ prime.  Therefore, it follows that $(n,d)$ satisfies \lemref{lem:good-pair} (i) and (ii), $e \in \{1,d\}$, and $f = 1$.  In particular, we have only two possible outcomes for $\Aut G/C_{\Aut G}(G')$, and this is far too small a variation to help distinguish the vast number of isomorphism types that are possible for $G$.

%-----------
\section{Closing remarks}\label{sec:closing}

\subsection{2-groups}\label{sec:2-groups}

In our first version of this article, we included quotients $G$ of Heisenberg $2$-groups.  Though some of the arguments are unchanged, there were technical flaws whose resolutions ultimately detracted from the goals set forth in our introduction.  Also, it was well-known that the isomorphism types of quotients of Heisenberg $2$-groups are determined by the character tables together with power maps (cf.  \cite{Nenciu:VZ}).  For these reasons, we opted to focus on the odd prime case.  Below we outline the different strategy needed for $2$-groups.

A group of exponent $2$ is abelian, and so, we cannot use that assumption with quotients of Heisenberg groups.  However, we can replace the need for exponent $2$ by assuming only that our group is generated by appropriate subgroups of exponent $2$.  (Many definitions below apply to odd primes as well.)

We say a group $G$ is \emph{hyperbolic} if it has abelian normal subgroups $E$ and $F$ such
that $G = EF$ and $E\cap F = Z(G)$.  (This name is motivated by the term hyperbolic as used with classical forms and has no intended relationship to hyperbolic groups in the sense of Gromov.)  The pair $(E,F)$ is a \emph{hyperbolic pair} for $G$. If $Z(G)$ splits in $E$ and $F$, then we say that $G$ is \emph{split hyperbolic}.

\begin{ex}\label{ex:Hei-hyper}
Generalized Heisenberg groups (over \emph{any} field $K$) $H = H_m(K)$ are split hyperbolic groups, e.g. they have the following split hyperbolic pair:
\begin{equation}\label{eq:Hei-pair}
E  = \left\{ \begin{bmatrix} 1 & u & s \\ 0 & I_m & 0 \\ 0 & 0 & 1 \end{bmatrix}:
 s\in K, u\in K^m\right\} \&
 F = \left\{ \begin{bmatrix} 1 & 0 & s \\ 0 & I_m & v^t \\ 0 & 0 & 1 \end{bmatrix}:
 s\in K, v\in K^m\right\}.
\end{equation}
\end{ex}

We now show that creating hyperbolic groups is easy.  The idea dates back to Brahana \cite{Brahana}.  Let $c : U \times  V \to W$ be a bimap, and define a group $\Grp_{Bra}(c)$ on $U \times V \times W$ with product
\begin{align*}
	(u,v;s)(x,y;t) & = (u+x, v+y; s+t+c(u,y) ) & (\forall (u,v;s),(x,y;t)\in U \times V \times W).
\end{align*}
Note $G := \Grp_{Bra}(c)$ is a hyperbolic group of nilpotence class $2$ with hyperbolic pair
$E = U \times 0 \times W$ and $F = 0 \times V \times W$.  If $W = c(U,V)$ and $c$ is nondegenerate, then $G' = Z(G) = 0 \times  0 \times W$ and $(E,F)$ is a split hyperbolic pair.  Observe that isotopic bimaps produce isomorphic groups.
\begin{ex}\label{ex:bi-Hei}
If $K$ is a field and $d : K^m \times K^m \to K$ is the dot-product
(i.e.: $d(u,v) = uv^t$, for all $u,v\in K^m$), then $\Grp_{Bra}(d)$ is
isomorphic to the generalized Heisenberg group of degree $m$ over
$K$.
\end{ex}

We still need to replace $\Bi$ from the Baer correspondence.  A nilpotent group $G$ of class $2$ has a hyperbolic pair $(E,F)$ if and only if $G/Z(G) = E/Z(G)\oplus F/Z(G)$ and $b (E/Z(G),E/Z(G)) = 0 = b (F/Z(G),F/Z(G))$, for $b = \Bi (G)$.  Assuming that $(E,F)$ is a hyperbolic pair for $G$, we may restrict $b$ to a second bimap:
\begin{align*}
    c &  = \Bi (G;E,F) :E/Z(G) \times F/Z(G) \to Z(G)
\end{align*}
where $c(u,v)  = b ( u,v)$ for all $u\in E/Z(G)$ and all $v\in F/Z(G)$.  As $(E,F)$ is a hyperbolic pair and $b$ is alternating, it follows for all $u,x \in E/Z(G)$ and all $v,y \in F/Z(G)$ that
\begin{equation*}
\begin{split}
	b (u+v,x+y) & = b (u,x) + b (u,y)+b (v,x) + b (v,y)
%		 = b (u,v')-b (u',v) \\
		 = c(u,y) - c(x,v).
\end{split}
\end{equation*}
Hence, $c$ determines $b$, and $c$ is nondegenerate. Unfortunately, this depends on the choice of hyperbolic pair $(E,F)$, and so, it introduces several ambiguities.  In the special case of a group $G$ where $1 = Z(G)^2 < G^2\leq G' = Z(G) < G$ (as is the case for quotients of Heisenberg $2$-groups), we have a quadratic map $q := \Qd(G) : G/Z(G) \to G'$ where $q(Z(G)u) = u^2$ for all $u\in G$.  We also observe that if $G = \Grp(c)$, then in characteristic $2$,
\begin{align*}
	(u,v;s)^2 & = (2u,2v; 2s+c(u,v)) = (0,0;c(u,v)) & (\forall (u,v;s) \in U \times V \times W).
\end{align*}
In particular, the $c$ used to define $G$ can be recovered canonically from squares.

\begin{prop}\label{prop:any-hyper}
Let $1 = Z(G)^2 < G^2 \leq G' \leq Z(G) < G$.  If $(E,F)$ is a split hyperbolic pair for a split hyperbolic group $G$, then $\Bi (G;E,F) = \Qd(G)$ as functions.  In particular, $\Bi (G;E,F)$ does not depend on the choice of $(E,F)$.
\end{prop}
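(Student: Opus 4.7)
The plan is to reduce the proposition to a single group-theoretic identity. By definition, $\Bi(G;E,F)(Z(G)e, Z(G)f) = [e,f]$ while $\Qd(G)(Z(G)(ef)) = (ef)^2$, and since the map $(Z(G)e, Z(G)f) \mapsto Z(G)(ef)$ is the natural isomorphism coming from the internal direct sum $G/Z(G) = E/Z(G) \oplus F/Z(G)$ (noting $E \cap F = Z(G)$), asserting equality ``as functions'' amounts to checking the pointwise identity $(ef)^2 = [e,f]$ for every $e \in E$ and $f \in F$.

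The first step is to extract exponent $2$ on $E$ and $F$ from the hypotheses. Using that the pair is \emph{split}, write $E = Z(G) \oplus E_0$. For any $t \in E_0$, one has $t^2 \in E_0$ (since $E_0$ is a subgroup) and simultaneously $t^2 \in G^2 \leq Z(G)$, so $t^2 \in E_0 \cap Z(G) = 1$. Combined with $Z(G)^2 = 1$ and the fact that $E$ is abelian, this forces $E^2 = 1$; the argument for $F^2 = 1$ is identical.

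With this in hand, the identity is a routine class-$2$ computation. Starting from $[e,f] = e^{-1}f^{-1}ef \in Z(G)$ one derives $fe = ef\,[e,f]^{-1}$, and expanding $(ef)^2 = e(fe)f$ while using the centrality of $[e,f]^{-1}$ gives
\[
(ef)^2 \;=\; e^2 f^2 [e,f]^{-1}.
\]
Applying $e^2 = f^2 = 1$ reduces this to $[e,f]^{-1}$, and since $[e,f] \in G' \leq Z(G)$ together with $Z(G)^2 = 1$ forces $[e,f]^{-1} = [e,f]$, we conclude $(ef)^2 = [e,f]$, which is exactly what was needed.

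The ``in particular'' clause then follows immediately: $\Qd(G)$ is defined intrinsically in $G$, so for any other split hyperbolic pair $(E',F')$ the bimap $\Bi(G;E',F')$ must likewise coincide with $\Qd(G)$ under its own decomposition, forcing it to agree with $\Bi(G;E,F)$. The only genuinely delicate point is recognizing that the splitting hypothesis is what drives $E^2 = F^2 = 1$; without splitting, elements of $E_0$ could square nontrivially into $Z(G)$ and the clean identity $(ef)^2 = [e,f]$ would fail.
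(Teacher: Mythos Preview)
Your proof is correct. The paper does not include a formal proof of this proposition; it only records the motivating computation in the paragraph immediately preceding it, namely that in the Brahana model $\Grp_{Bra}(c)$ one has $(u,v;s)^2 = (0,0;c(u,v))$ in characteristic $2$, so squares recover $c$. Your argument carries out exactly this computation abstractly, for an arbitrary split hyperbolic pair, rather than in the coordinate model. The key step you isolate---using the splitting $E = Z(G)\oplus E_0$ together with $G^2\leq Z(G)$ to force $E_0^2=1$ and hence $E^2=1$---is precisely the content that the paper leaves implicit. Your route is in fact slightly more general than appealing to the Brahana model via Proposition~\ref{prop:back-forth}, since that isomorphism requires $G'=Z(G)$, whereas your direct computation works under the weaker hypothesis $G'\leq Z(G)$ stated in the proposition.
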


Notice that the use of a quadratic map means the role of the symplectic group in our proofs is now replaced by orthogonal groups, for example \thmref{thm:aut-Hei} must be adapted.

The following correspondence of Brahana \cite{Brahana} is perhaps the earliest version of a functorial relationship between nilpotent groups and bimaps. Unlike its later generalizations by Baer, Mal'cev, Kaloujnine, and Lazard, it applies to $p$-groups without restriction on $p$ (at the cost of specializing to  hyperbolic groups).  

\begin{prop}[Brahana 1935]\label{prop:back-forth}
A group $G$ is hyperbolic if and only if $G$ is isoclinic to $\Grp_{Bra}(c)$ for a bimap
$c : U \times V \to W$.  In particular, $c$ can be chosen to be nondegenerate and
with $W = Z(G)$.  If $G$ is split hyperbolic and $G' = Z(G)$, then the isoclinism can be selected
to be an isomorphism.
\end{prop}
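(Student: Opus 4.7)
My plan has three parts, matching the three assertions of the proposition.

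\textbf{Part 1 (forward direction).} Given a hyperbolic group $G$ with hyperbolic pair $(E,F)$, set $U = E/Z(G)$, $V = F/Z(G)$, $W = Z(G)$, and $c = \Bi(G;E,F):U \times V \to W$. I first verify $c$ is nondegenerate: if $c(u,V) = 0$ then $\Bi(G)(u,V) = 0$, and since $E$ is abelian $\Bi(G)(u,U) = 0$ as well, so $u = 0$ in $G/Z(G) = U \oplus V$ by nondegeneracy of $\Bi(G)$. Next I show $G$ is isoclinic to $\tilde G := \Grp_{Bra}(c)$. A direct computation from the Brahana product gives
\begin{align*}
[(u,v;0),(u',v';0)]_{\tilde G} = (0,0;\,c(u,v') - c(u',v)),
\end{align*}
and $Z(\tilde G) = 0 \times 0 \times W$ with $\tilde G' = 0 \times 0 \times c(U,V)$. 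Using the canonical identifications $G/Z(G) = U \oplus V \cong \tilde G/Z(\tilde G)$ and $G' = c(U,V) \cong \tilde G'$ (equality $G' = c(U,V)$ coming from the formula $\Bi(G)((u,v),(x,y)) = c(u,y) - c(x,v)$ in the paragraph before the proposition), the pair $(\operatorname{id};\operatorname{id})$ is a $\mathbb{Z}$-pseudo-isometry $\Bi(G) \to \Bi(\tilde G)$, which is exactly an isoclinism. Since $c$ is nondegenerate with target $W = Z(G)$, the ``in particular'' clause is built into this construction.

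\textbf{Part 2 (backward direction).} Given any bimap $c:U \times V \to W$, I first exhibit a hyperbolic pair in $\tilde G := \Grp_{Bra}(c)$. Let $V^\top$ and $U^\bot$ be the radicals of $c$, and take $\tilde E = U \times U^\bot \times W$ and $\tilde F = V^\top \times V \times W$. Direct checks from the product formula show $\tilde E, \tilde F$ are abelian (the correction term $c(u_1,v_2)$ vanishes on each), contain $Z(\tilde G) = V^\top \times U^\bot \times W$ (hence are normal since $\tilde G/Z(\tilde G)$ is abelian), generate $\tilde G$, and satisfy $\tilde E \cap \tilde F = Z(\tilde G)$; when $c$ is nondegenerate these collapse to $(U \times 0 \times W, 0 \times V \times W)$. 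I then transfer hyperbolicity across isoclinism: given a pseudo-isometry $(\phi;\hat\phi):\Bi(G) \to \Bi(\tilde G)$, define $E \leq G$ to be the preimage under $G \to G/Z(G)$ of $\phi^{-1}(\tilde E/Z(\tilde G))$, and similarly $F$. These contain $Z(G)$ and are therefore normal; for $x,y \in E$, compatibility gives $\hat\phi([x,y]_G) = [\phi(\bar x),\phi(\bar y)]_{\tilde G} = 0$, and injectivity of $\hat\phi$ forces $[x,y] = 1$, so $E$ (and similarly $F$) is abelian; the equalities $EF = G$ and $E \cap F = Z(G)$ follow from the corresponding properties of $\tilde E, \tilde F$.

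\textbf{Part 3 (isomorphism in the split case).} Assume $G$ is split hyperbolic with $G' = Z(G)$. Fix splittings so that $E = U \oplus Z(G)$ and $F = V \oplus Z(G)$ internally, set $W = Z(G)$, and define $c:U \times V \to W$ by $c(u,v) := [u,v]_G$. Bilinearity uses class $2$; nondegeneracy and surjectivity $c(U,V) = G' = Z(G)$ are straightforward, and cardinality gives $|U||V||W| = |E||F|/|Z(G)| = |G|$. Define $\phi: \Grp_{Bra}(c) \to G$ by $\phi(u,v;s) := vus$; it is a bijection by the cardinality count together with the observation that every $g = ef \in EF$ equals $vus$ for suitable $u,v,s$. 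Writing $u_1 v_2 = v_2 u_1\,[u_1,v_2]$ and using centrality of commutators gives
\begin{align*}
\phi(u_1,v_1;s_1)\phi(u_2,v_2;s_2) = v_1 v_2\, u_1 u_2\, s_1 s_2\, [u_1,v_2],
\end{align*}
which matches $\phi\bigl((u_1,v_1;s_1)(u_2,v_2;s_2)\bigr) = v_1 v_2\, u_1 u_2\, s_1 s_2\, c(u_1,v_2)$ exactly because $c(u_1,v_2) = [u_1,v_2]$. So $\phi$ is an isomorphism.

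\textbf{Main obstacle.} The delicate point is the convention mismatch in Part 3: the Brahana product places the correction $c(u_1,v_2)$ on the ``diagonal'' pair (first factor's $U$-coordinate, second factor's $V$-coordinate), whereas the naive identification $\phi(u,v;s) = uvs$ produces a correction $[v_1,u_2]$ involving the \emph{opposite} pairing and fails to give a homomorphism for any choice of $c$. The remedy is the reversed ordering $\phi(u,v;s) = vus$, which is what makes commuting $u_1$ past $v_2$ produce exactly the commutator $[u_1,v_2]$ demanded by the Brahana formula; once this convention is pinned down, the rest of the verifications are routine.
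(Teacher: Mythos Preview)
Your proof is correct and follows essentially the same route as the paper's.  The paper dispatches the reverse direction in one line (``explained above''), relying on the earlier remark that $\Grp_{Bra}(c)$ is hyperbolic together with the characterization of hyperbolic pairs purely in terms of $\Bi(G)$; you instead spell out the transfer of hyperbolicity across an isoclinism and even handle degenerate $c$ by enlarging the pair to $(U\times U^{\bot}\times W,\;V^{\top}\times V\times W)$, which is a pleasant extra.  For the split case the paper simply records $G_1=E_0\ltimes F$ and asserts that $(u,v)\mapsto (uf,vg)$ is an isomorphism, whereas you carry out the commutator bookkeeping explicitly and correctly isolate the ordering issue (your map $\phi(u,v;s)=vus$ rather than $uvs$) that makes the Brahana correction term $c(u_1,v_2)$ line up with $[u_1,v_2]$---this is exactly the content hidden in the paper's semidirect-product shorthand.
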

\begin{proof}
The reverse direction is explained above so we focus on the forward direction.

Let $(E,F)$ be a hyperbolic pair for a hyperbolic group $G_1$.
Let $c = \Bi (G_1;E,F)$ and  set $G_2 = \Grp(c) = E/Z(G_1) \times  F/Z(G_1) \times G_1'$.  As $G_2/Z(G_2)\cong E/Z(G_1)\oplus F/Z(G_1)$ and $G_2' = 0 \times 0 \times G_1'$, there are isomorphisms $\varphi : G_1/Z(G_1) \to G_2/Z(G_2)$ and $\ct{\varphi} : G_1' \to G_2'$ $G_1/Z(G) = E/Z(G)\oplus F/Z(G)$.  It follows that $(\varphi;\ct{\varphi}) : \Bi (G_1) \to \Bi (G_2)$ is a pseudo-isometry and so $G_1$ and $G_2$ are isoclinic.

If $G_1$ is split hyperbolic with split hyperbolic pair $(E,F)$, then there are subgroups $E_0\leq E$ and $F_0\leq F$ such
that $E = E_0\oplus Z(G)$ and $F = F_0\oplus Z(G)$.  Observe that $G_1 = E_0\ltimes F$.  We have canonical
isomorphisms $f : E_0 \to E/Z(G_1) \times 0 \times 0\leq G_2$ and $g : F \to 0 \times F/Z(G_1) \times Z(G_1)\leq G_2$.
Also, $(u,v)\mapsto (uf, vg)$, for $u\in E_0$ and $v\in F$, induces an isomorphism $G_1 \to G_2$.
\end{proof}

\begin{remark}
Brahana introduced his correspondence as between hyperbolic groups (our terminology) and trilinear $k$-forms, that is, functions $t : U \times V \times W \to k$ that are $k$-linear in each variable.  Notice $t$ determines a $k$-bimap $b : U \times  V \to \hom_k(W,k)$ by $b (u,v) = t(u,v,-)$.  Also, given a monomorphism $\tau : W \to \hom_k(W,k)$, a $k$-bimap $b : U \times  V \to W$ can be converted into a trilinear $k$-form $t : U \times V \times W \to k$ via $t(u,v,w) = w(b (u,v)\tau)$.  Thus our treatment above is equivalent to Brahana's.
\end{remark}

Using these tools one can derive appropriate variants of our main theorems.  However, as we mentioned at the start, these examples are not so satisfactory because there are well-known isomorphism invariants for such groups.  What we would very much like to know is a family of $2$-groups with expansive growth, a polynomial-time isomorphism test, and no obvious isomorphism invariants.  That is still an open problem.

\subsection{Our results as a `converse' to Brauer's problem}
A final consequence of our results concerns Brauer tuples.
Two groups $G$ and $H$ form a \emph{Brauer pair} if they are
nonisomorphic yet have an isomorphism between their character tables that preserves
powers.  Brauer asked if such pairs exist \cite[p. 138]{Brauer}, suggesting
that perhaps the character table considered along with powers would determine the isomorphism class of a finite group.  This was answered in the negative by Dade \cite{Dade}.  Nenciu \cite{Nenciu:VZ} showed there are no Brauer pairs of Camina $2$-groups of nilpotence class $2$ and the second author describes conditions for odd Camina $p$-groups of nilpotence class $2$ to be Brauer pairs \cite{Lewis:Brauer}.
Brauer pairs have since been generalized.  Following Eick and M\"uller in
\cite{EiMu} and Nenciu in \cite{Nenciu:tuple}, we say that the
groups $(G_1, \dots, G_t)$ form a \emph{Brauer $t$-tuple} if for all $1\leq i < j\leq t$,
$(G_i, G_j)$ is a Brauer pair.  Eick and
M\"uller proved the existence of Brauer $4$-tuples \cite{EiMu}, and
Nenciu proved the existence of $t$-tuples for
arbitrarily large $t$ in \cite{Nenciu:tuple}.

\corref{coro:Camina}(v)  and \thmref{thm:main} give Brauer $t$-tuples of exponential size $t$.  These new $t$-tuples are quite different from previous examples.
In fact, we see our result as a converse to Brauer's problem.  We give a seemingly routine set of groups that are pairwise nonisomorphic.  Should there not also be a routine explanation of why two members from the set are nonisomorphic?  

%-------------------------
\section*{Acknowledgments}
We are grateful to the referee whose comments both improved the writing and prompted us to notice our gaps for the $2$-group setting.

%---------------
%  Bibliography.

\def\cprime{$'$}

\end{document}